\numberwithin{equation}{section}
\newcommand{\eps}{\varepsilon}
\newcommand{\Z}{{\mathbb{Z}}}
\newcommand{\N}{\mathbb{N}}
\newcommand{\R}{\mathbb{R}}
\newcommand{\C}{\mathbb C}
\newcommand{\LLL}{\mathcal{L}}
\newcommand{\MMM}{\mathcal{M}}
\newcommand{\YYY}{\mathcal{Y}}
\newcommand{\ZZZ}{\mathcal{Z}}
\theoremstyle{plain}
\newtheorem{theorem}{Theorem}
\newtheorem{proposition}[theorem]{Proposition}
\newtheorem{lemma}[theorem]{Lemma}
\newtheorem{corollary}[theorem]{Corollary}
\theoremstyle{definition}
\newtheorem{remark}[theorem]{Remark}
\numberwithin{equation}{section}
\numberwithin{theorem}{section}
\numberwithin{equation}{section}
\renewcommand{\Re}{\mathrm{Re}}
\renewcommand{\Im}{\mathrm{Im}}
\newcommand{\bee}{\begin{eqnarray*}}
	\newcommand{\eee}{\end{eqnarray*}}
\numberwithin{equation}{section}
\numberwithin{theorem}{section}
\begin{document}
	\onehalfspacing
	
	\title[Dynamics of radial threshold solutions for Hartree equation]
	{Dynamics of radial threshold solutions for generalized energy-critical Hartree equation}
	
	\author[]{Xuemei Li}
	\address{\hskip-1.15em Xuemei Li
		\hfill\newline Laboratory of Mathematics and Complex Systems,
		\hfill\newline Ministry of Education,
		\hfill\newline School of Mathematical Sciences,
		\hfill\newline Beijing Normal University,
		\hfill\newline Beijing, 100875, People's Republic of China.}
	\email{xuemei\_li@mail.bnu.edu.cn}
	
	\author[]{Chenxi Liu}
	\address{\hskip-1.15em Chenxi Liu
		\hfill\newline Laboratory of Mathematics and Complex Systems,
		\hfill\newline Ministry of Education,
		\hfill\newline School of Mathematical Sciences,
		\hfill\newline Beijing Normal University,
		\hfill\newline Beijing, 100875, People's Republic of China.}
	\email{liuchenxi@mail.bnu.edu.cn}
	
	\author[]{Xingdong Tang}
	\address{\hskip-1.15em Xingdong Tang
		\hfill\newline School of Mathematics and Statistics, \hfill\newline Nanjing Univeristy of Information Science and Technology,
		\hfill\newline Nanjing, 210044,  People's Republic of China.}
	\email{txd@nuist.edu.cn}
	
	\author[]{Guixiang Xu}
	\address{\hskip-1.15em Guixiang Xu
		\hfill\newline Laboratory of Mathematics and Complex Systems,
		\hfill\newline Ministry of Education,
		\hfill\newline School of Mathematical Sciences,
		\hfill\newline Beijing Normal University,
		\hfill\newline Beijing, 100875, People's Republic of China.}
	\email{guixiang@bnu.edu.cn}

	\subjclass[2010]{Primary: 35Q41, Secondary: 35Q55}
	
	\keywords{Concentration compactness rigidity argument; Hartree equation;  Modulation
		analysis; Nondegeneracy of ground state; Spectral
		theory; Uniqueness.}

	\begin{abstract} In this paper, we study long time dynamics of radial threshold solutions for the focusing, generalized energy-critical Hartree equation and classify all radial threshold solutions. The main arguments are the spectral theory of the linearized operator, the modulational analysis and the concentration compactness rigidity argument developed by T. Duyckaerts and F. Merle to classify all threshold solutions for the energy critical NLS and NLW in \cite{DuyMerle:NLS:ThresholdSolution, DuyMerle:NLW:ThresholdSolution}, later by D. Li and X. Zhang in \cite{LiZh:NLS, LiZh:NLW} in higher dimensions.  The new ingredient here is to solve  the nondegeneracy of positive bubble solutions with nonlocal structure in $\dot H^1(\R^N)$ (i.e. the spectral assumption in \cite{MiaoWX:dynamic gHartree}) by the nondegeneracy result of positive bubble solution in $L^{\infty}(\R^N)$ in \cite{LLTX:Nondegeneracy} and the Moser iteration method in \cite{DiMeVald:book}, which is related to the spectral analysis of the linearized operator with nonlocal structure, and plays a key role in the construction of the special threshold solutions, and the classification of all threshold solutions.  
	\end{abstract}
	
	\maketitle
	
	
	
	\section{Introduction}\label{introduction}
	
	In this paper, we consider the focusing, generalized energy-critical Hartree equation
	\begin{equation}\label{har}
		i \partial_t u + \Delta u  + \left( I_{\lambda}*|u|^p\right)|u|^{p-2} u =0,\quad
		(t,x)\in \R \times \R^N,
	\end{equation}
	where $ N \geq 3$, $0< \lambda< N$, $\lambda\leq 4$, $p=\frac{2N-\lambda}{N-2}\geq 2$ and the Riesz-potential stands for
	$$
	I_{\lambda}(x):=\frac{\Gamma(\frac{\lambda}{2})}{\pi^{\frac{N}{2}}\Gamma(\frac{N-\lambda}{2})2^{N-\lambda}|x|^{\lambda}}.
	$$
	
	The generalized Hartree equation can be considered as a classical limit of a field equation and it describes a quantum mechanical non-relativistic many-boson system interacting through a two body potential $V(x)=\frac{1}{|x|^{\lambda}}$ , see \cite{Ginibre}. As for the mean-field limit of many-body quantum systems, we can refer to the work by Hepp in  \cite{Hepp}, see also \cite{Bardos, C.Bardos, Frohlich, Ginibre,  Spohn}. Lieb and Yau use it to develop the theory for stellar collapse in \cite{Lieb} .
	
	The equation \eqref{har} enjoys several symmetries: If $u(t,x)$ is a solution, then
	\begin{enumerate}
		\item[(a)]  so is $\delta^{\frac{N-2}{2}}u(\delta^{2} t,
		\delta x)$, $\delta> 0$ by scaling invariance;
		\item[(b)]so is $u(t+t_0, x)$ for $t_0\in \R$  by time translation invariance;
		\item[(c)]so is $u(t, x+x_0)$ for $x_0\in \R^N$ by spatial translation invariance;
		\item[(d)] so is $e^{i\theta_0}u(t,x)$, $\theta_0\in \R$  by phase rotation invariance;
		\item[(e)] so is $\overline{u(-t, x)}$  by time reversal.
	\end{enumerate}
	
	The Cauchy problem for \eqref{har} was developed by A.~K. ~Arora and S.~Roudenko  in \cite{Arora: wellposedness, Arora:LWP, Arora:local wellposedness, Caz:book}. Namely, if $u_0 \in \dot H^1(\R^N)$, there
	exists a unique solution defined on a maximal interval
	$I=\left(-T_-(u), T_+(u)\right)$ and the energy
	\begin{equation}\label{energy}
		\aligned  E(u(t)):=\frac12 \int_{\R^N} \big| \nabla u(t,x)\big|^2 dx
		-\frac{1}{2p} \int_{\R^N} \left( I_{\lambda}*|u|^p\right)\big|u(t,x)\big|^p \ dx
		\endaligned
	\end{equation}
	is conserved by time translation invariance of equation \eqref{har}. The exponent
	$
	p=\frac{2N-\lambda}{N-2}
	$
	is energy-critical exponent since the scaling transform
	\begin{equation*}
		u(t,x)\rightarrow
		\delta^{\frac{N-2}{2}}u\left(\delta^{2} t,
		\delta x\right), \; \delta>0
	\end{equation*}
	makes the equation \eqref{har} and the energy
	\eqref{energy} invariant.

	For long time behavior of solutions for \eqref{har}, the ground state plays an important role, which is the solution of the elliptic equation with nonlocal potential
	\begin{equation}\label{gs}
		\aligned -\Delta W (x)= \left( I_{\lambda}*W^p\right)\;W^{p-1}, \quad x\in \R^N.
		\endaligned
	\end{equation}
	
	According to \cite{Du:ground state}, we have the variational characterization of $W$, which can be proved by combining sharp Sobolev inequality \cite{Aubin, LiebL:book,	Talenti:best constant} with sharp Hardy-Littlewood-Sobolev inequality \cite{Lieb:sharp constant for HLS, LiebL:book}. Recently, the authors give an alternative proof of the existence of the extremizer of this sharp Hardy-Littlewood-Sobolev inequality in $\R^N$ by using the stereographic projection and sharp Hardy-Littlewood-Sobolev inequality on the sphere  in \cite{LLTX:Nondegeneracy}. That is, 
	
	\begin{proposition}[\cite{Du:ground state, LLTX:Nondegeneracy}]\label{SharpConstant} Let  $ N \geq 3$, $0< \lambda< N$, $\lambda\leq 4$, and 	$
		p=\frac{2N-\lambda}{N-2}\geq 2$. Then for any $f \in \dot H^1(\R^N), $ we have
		\begin{align*}
			\left(\int_{\R^N}
			\left( I_{\lambda}*|f|^p\right)\; |f(x)|^p \right)^{\frac{1}{p}}\leq & \; C(N,\lambda) \big\|\nabla f \big\|^2_2,
		\end{align*}
	where
$
	C(N,\lambda)=\left[(\frac{1}{2\sqrt{\pi}})^{N-\lambda}\frac{\Gamma(\frac{\lambda}{2})}{\Gamma(N-\frac{\lambda}{2})}\big(\frac{\Gamma(N)}{\Gamma(\frac{N}{2})}\big)^{\frac{N-\lambda}{N}}\right]^{\frac{1}{2p}}\cdot \left[\frac{N(N-2)}{4}2^{2/N}\pi^{\frac{N+1}{N}}\Gamma(\frac{N+1}{2})^{-\frac{2}{N}}\right]^{-\frac{1}{2}}
$
		and if
		\begin{equation*}\aligned \left(\int_{\R^N}
			\left( I_{\lambda}*|f|^p\right)\; |f(x)|^p \right)^{\frac{1}{p}}= & \; C(N,\lambda) \big\|\nabla f \big\|^2_2,  \endaligned
		\end{equation*}
		then there exist $\lambda_0>0$, $x_0 \in \R^N$, $\theta_0 \in [0,
		2\pi)$, such that
		\begin{equation*}
			\aligned f(x)= e^{i\theta_0} \lambda^{-\frac{N-2}{2}}_0W\left(
			\lambda^{-1}_0(x+x_0)\right),
			\endaligned
		\end{equation*}
	   where
	   \begin{equation}\label{w function}
	   	W=\left[N(N-2)\right]^{\frac{N-2}{4}}\big(\frac{1}{1+|x|^2}\big)^{\frac{N-2}{2}}.
	   \end{equation}

	\end{proposition}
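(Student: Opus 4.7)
The plan is to derive the inequality by composing the sharp Hardy--Littlewood--Sobolev (HLS) inequality of Lieb \cite{Lieb:sharp constant for HLS, LiebL:book} with the sharp Sobolev embedding of Aubin--Talenti \cite{Aubin, Talenti:best constant, LiebL:book}, and to read off the extremizers from the simultaneous saturation of both factors.

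First I would set $2^{*}=\frac{2N}{N-2}$ and $r=\frac{2N}{2N-\lambda}$. A direct computation gives $pr=2^{*}$ and $\frac{2}{r}+\frac{\lambda}{N}=2$, so sharp HLS applies to the diagonal pair $(|f|^{p},|f|^{p})$ and yields
\[
\int_{\R^{N}}\bigl(I_{\lambda}*|f|^{p}\bigr)|f(x)|^{p}\,dx \;\le\; C_{HLS}(N,\lambda)\,\|f\|_{L^{2^{*}}}^{2p},
\]
with $C_{HLS}(N,\lambda)$ Lieb's explicit Gamma-function constant (into which the Riesz normalization of $I_{\lambda}$ has been absorbed). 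Inserting the sharp Sobolev inequality $\|f\|_{L^{2^{*}}}\le C_{S}(N)\|\nabla f\|_{L^{2}}$ raised to the $2p$-th power and taking the $p$-th root produces
\[
\Bigl(\int_{\R^{N}}(I_{\lambda}*|f|^{p})|f|^{p}\,dx\Bigr)^{1/p} \;\le\; C_{HLS}(N,\lambda)^{1/p}\,C_{S}(N)^{2}\,\|\nabla f\|_{L^{2}}^{2}.
\]
Matching $C_{HLS}^{1/p}\,C_{S}^{2}$ to the displayed closed form for $C(N,\lambda)$ is a direct Gamma-function bookkeeping exercise using the classical formulas for Lieb's and Aubin--Talenti's sharp constants.

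For the characterization of equality, I would use that equality in the composed inequality forces equality in both factors. Saturation of sharp Sobolev, combined with the diamagnetic bound $|\nabla|f||\le|\nabla f|$ and the fact that passing to $|f|$ preserves the $L^{2^{*}}$-norm, forces $f=e^{i\theta_{0}}|f|$ for a constant phase $\theta_{0}\in[0,2\pi)$ (globally constant because an Aubin--Talenti profile has full support), with
\[
|f(x)|=\lambda_{0}^{-(N-2)/2}\,W\bigl(\lambda_{0}^{-1}(x+x_{0})\bigr),\qquad \lambda_{0}>0,\ x_{0}\in\R^{N},
\]
and $W$ the bubble \eqref{w function}. Substituting back, $|f|^{p}$ is (up to the same scaling and translation) proportional to $(1+|\cdot|^{2})^{-(2N-\lambda)/2}$, which is precisely Lieb's extremal profile for HLS in the diagonal case at exponent $r$; hence HLS is automatically saturated as well, no additional constraint appears, and the full symmetry group $(\lambda_{0},x_{0},\theta_{0})$ of the proposition is recovered.

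The main obstacle, beyond the routine constant computation, is this compatibility: the Sobolev extremizer raised to the $p$-th power must coincide with an HLS extremizer, so the two sharp inequalities can be simultaneously saturated at the same function. This is ensured by the algebraic identity $p\cdot\frac{N-2}{2}=\frac{2N-\lambda}{2}$ coming from the energy-critical relation $p=\frac{2N-\lambda}{N-2}$, which is the conceptual heart of the proof. The alternative route developed by the authors in \cite{LLTX:Nondegeneracy} bypasses this compatibility check by conformally pulling the problem back to $\mathbb{S}^{N}$ via stereographic projection and invoking sharp HLS on the sphere, where both the constant and the extremizers emerge from a single spherical extremum attained by constant functions.
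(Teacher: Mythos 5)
Your proposal is correct and matches exactly the strategy the paper indicates for this proposition: composing Lieb's sharp Hardy--Littlewood--Sobolev inequality with the sharp Aubin--Talenti Sobolev inequality, with the energy-critical relation $p(N-2)/2=(2N-\lambda)/2$ guaranteeing that the Sobolev bubble's $p$-th power is simultaneously an HLS extremizer. The paper does not reprove this but cites \cite{Du:ground state} for precisely this composite argument and \cite{LLTX:Nondegeneracy} for the alternative stereographic-projection route, both of which you identify.
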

	
	As for the solution of \eqref{har} with energy below the threshold $E(W)$, C. Guzmán and C. Xu make use of the concentration compactness rigidity argument, which is originally developed by C.~Kenig and F.~Merle in \cite{Kenigmerle:H1 critical
		NLS, Kenig-merle:wave},   (see  also  \cite{MiaoXZ:09:e-critical radial Har}) to show that
	\begin{theorem}[\cite{XuCb:scattering threshold}]\label{belowthreshold}
		Let  $ N= 3$, $0< \lambda< N$,  $p=\frac{2N-\lambda}{N-2} \geq 2$, $W$ be the ground state solution of \eqref{gs} and $u$ be a radial solution of \eqref{har} in $\dot H^1(\R^N) $ such that 
		$$  E(u_0)< E(W)\;\text{and}\;\;\|u_0\|_{\dot{H}^1(\R^N)}<\|W\|_{\dot{H}^1(\R^N)}.$$
		Then the solution $u$ is global in time and scatters both forward and backward in time, i.e. there exists $u_{\pm}\in \dot H^1(\R^N)$, such that 	$$\lim_{t\rightarrow \pm \infty}\big\| u(t)-e^{it \Delta }u_{\pm}\big\|_{\dot H^1(\R^N)} = 0.$$
	\end{theorem}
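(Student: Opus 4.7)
The plan is to run a concentration compactness/rigidity argument in the spirit of Kenig--Merle \cite{Kenigmerle:H1 critical NLS, Kenig-merle:wave}, adapted to the nonlocal Hartree nonlinearity as in \cite{MiaoXZ:09:e-critical radial Har, XuCb:scattering threshold}. The argument breaks into four steps: variational trapping, small-data/perturbation theory, extraction of a critical element, and rigidity.

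First, using Proposition \ref{SharpConstant} together with conservation of $E(u)$, one shows that the open set
\[
\mathcal{A}^{-}:=\bigl\{v\in\dot H^1(\R^N):E(v)<E(W),\ \|\nabla v\|_2<\|\nabla W\|_2\bigr\}
\]
is invariant under the flow of \eqref{har}, and that every $u$ as in Theorem \ref{belowthreshold} satisfies a uniform coercivity $\|\nabla u(t)\|_2^2\leq (1-\bar{\delta})\|\nabla W\|_2^2$ with $\bar{\delta}=\bar{\delta}(E(u_0)/E(W))>0$, together with a quantitative lower bound on $E(u)$ in terms of $\|\nabla u\|_2^2$. In particular $u$ is global with a uniform $\dot H^1$ bound. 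Combining this with the standard small-data scattering in $\dot H^1(\R^3)$ (based on Strichartz and sharp Hardy--Littlewood--Sobolev estimates) and a perturbation lemma for \eqref{har}, the theorem reduces to a uniform a priori bound on a subcritical Strichartz norm on all of $\R$.

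Next, assuming the scattering statement fails, let $E_c\in(0,E(W))$ be the infimum of energies in $\mathcal{A}^{-}$ for which some radial datum produces a non-scattering solution. Apply a radial linear profile decomposition in $\dot H^1(\R^3)$ (Keraani-type) to an optimising sequence. Energy sub-additivity combined with Step 1 applied to each profile, together with a stability argument for \eqref{har}, forces all but one nonlinear profile to vanish, yielding a critical element $u_c\not\equiv 0$ that is global, non-scattering, attains $E_c$, and whose orbit is precompact in $\dot H^1$ modulo scaling, $u_c(t,x)=\lambda(t)^{-(N-2)/2}\,v(t,\lambda(t)^{-1}x)$ with $\{v(t,\cdot)\}_{t\in\R}$ precompact. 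The final rigidity step shows no such $u_c$ can exist: in the radial setting one first rules out $\lambda(t)\to 0$ by a no-concentration argument and controls $\lambda(t)$ from above, then applies a truncated virial identity
\[
\tfrac{d}{dt}\,\mathrm{Im}\int_{\R^3}\phi_R(x)\,\bar u_c(t,x)\,x\cdot\nabla u_c(t,x)\,dx,
\]
with a radial cutoff $\phi_R$; the coercivity of Step 1 forces a strict lower bound on the resulting Morawetz-type quantity, which grows linearly in $t$, contradicting the uniform bound coming from precompactness. Hence $E_c=E(W)$ and the theorem follows.

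The main obstacle throughout is the nonlocal nonlinearity $(I_\lambda\ast|u|^p)|u|^{p-2}u$. In the profile decomposition step one has to split the pairing $\iint I_\lambda(x-y)|\sum_j \phi^j(x)|^p|\sum_k \phi^k(y)|^p\,dx\,dy$ into diagonal and cross-scale contributions and show that the latter vanish asymptotically via sharp HLS combined with the orthogonality of the scaling/time parameters of the profiles. In the rigidity step, the commutator of $I_\lambda\ast$ with $\phi_R$ generates boundary terms absent in the local NLS case, which must be controlled using the decay of $I_\lambda$ at infinity and the additional decay provided by radial Sobolev embedding in $\R^3$; this is the step where the radial hypothesis and the dimensional restriction $N=3$ enter most delicately.
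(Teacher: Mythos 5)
The paper does not give its own proof of Theorem~\ref{belowthreshold}; it is cited from Guzm\'an--Xu \cite{XuCb:scattering threshold}, and the authors explicitly attribute the method to the Kenig--Merle concentration compactness rigidity scheme. Your outline---variational trapping below the ground state via Proposition~\ref{SharpConstant} and energy conservation, small-data/perturbation theory, radial profile decomposition yielding a precompact critical element, and a truncated virial rigidity argument with extra care taken for the nonlocal terms---is a correct sketch of that same scheme, so the approach matches.
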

	
	\begin{remark}\label{belowthreshold:blow up}
		Using the Virial identity, we can also show the blow-up result:
		if  $E(u_0)< E(W)$, $\big\|\nabla u_0\big\|_{L^2}> \big\|\nabla W\big\|_{L^2}$ and  $x\cdot u_0 \in L^2(\R^N)$, then  the solution blows up at finite time $T_{\pm}$, which can be shown from \cite{MiaoXZ:09:e-critical radial Har}. So far, there have been many other dynamics results of the Hartree equation. In the mass and energy critical cases, please refer to \cite{KriLR:m-critcal Har, LiMZ:e-critical Har,  LiZh:har:class, MiaoXZ:09:e-critical radial Har, MiaoXZ:09:m-critical Har, MiaoXZ:10:blowup, Na99d, YRZ:L2}, and we can refer to \cite{AR:mass-energy2, AR:mass-energy, CaoG, GiV00, KriMR:mass-subcritical har, MXY:defocusing, MXZ:defocusingradial, MXZ:defocusingn, ZhouTao: Threshold} for other cases. 
	\end{remark}
	
	In this paper, we follow the argument by T.~Duyckaerts and F. Merle in \cite{DuyMerle:NLS:ThresholdSolution,DuyMerle:NLW:ThresholdSolution}, D. Li and X. Zhang in \cite{LiZh:NLS, LiZh:NLW} and consider the dynamics of radial solutions of \eqref{har} with threshold energy $E(W)$. Our goal is to classify
	the threshold solutions with 
	\begin{equation*}
		\aligned u_0 \in \dot H^1(\R^N) \, \text{is radial, and}\;\; E(u_0)=E(W).
		\endaligned
	\end{equation*}
	For the case $p=2$ and $\lambda=4<N$, C. Miao, Y. Wu and the fourth author study the dynamics of the radial threshold solution and classify the corresponding radial solutions in the energy space $\dot H^1(\R^N)$ under the spectral assumption (i. e. nondegeneracy of the ground state ) in \cite{MiaoWX:dynamic gHartree}. Based on the results by authors in  \cite{LLTX:Nondegeneracy, LTX:Nondegeneracy}, the motivation in this paper is to remove the spectral assumption in \cite{MiaoWX:dynamic gHartree} and solve the nondegeneracy of the positive bubble solutions of \eqref{har}  in the energy space $\dot H^1(\R^N)$. That is,
	
	\begin{proposition}\label{prop:nondegeneracy}
		Let $ N \geq 3$, $0< \lambda< N$, $\lambda\leq 4$ and $p=\frac{2N-\lambda}{N-2}\geq 2$. Then the nontrivial solution $U_{c,\delta,x_0}(x):=\frac{c}{\left(1+\delta^2|x-x_0|^2\right)^{\frac{N-2}{2}}}$ of the  equation
		$$-\Delta u  - \left( I_{\lambda}*|u|^p\right)|u|^{p-2} u =0,\quad x\in \R^N$$
		with constant sign is non-degenerate. More precisely, any energy solution to the linearized equation 
		\begin{equation}\label{elliptic equation}
			-\Delta \varphi =p \left[ I_{\lambda}*(|u|^{p-1}\varphi)\right]u^{p-1} +(p-1)\left( I_{\lambda}*|u|^{p}\right)u^{p-2}\varphi
		\end{equation}
		must be the linear combination of the functions $\varphi_1$, $\cdots$, $\varphi_N$  and $\varphi_{N+1}$ defined by
		$$\varphi_j(x):=\frac{\partial U_{1,1,0}}{\partial x_j}(x), 1\leq j\leq N,  \text{~and ~ }\varphi_{N+1}(x):=\frac{N-2}{2}U_{1,1,0}(x)+x\cdot \nabla U_{1,1,0}(x). $$
	\end{proposition}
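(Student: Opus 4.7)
By the scaling, translation and sign symmetries of the ground state equation \eqref{gs}, every constant-sign bubble $U_{c,\delta,x_0}$ is obtained from $W$ in \eqref{w function}, and these symmetries intertwine with the linearized operator on the right-hand side of \eqref{elliptic equation}; hence it suffices to establish the proposition at $u = W$. Set
\[
\mathcal{K} := \bigl\{ \varphi \in \dot H^1(\R^N) : -\Delta \varphi = p W^{p-1}\bigl(I_\lambda * (W^{p-1}\varphi)\bigr) + (p-1)\bigl(I_\lambda * W^p\bigr) W^{p-2}\varphi \bigr\}.
\]
Differentiating \eqref{gs} with respect to $\delta$ and to $x_0$ shows $\varphi_1,\dots,\varphi_{N+1} \in \mathcal{K}$. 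The $L^\infty$-nondegeneracy proved in \cite{LLTX:Nondegeneracy} asserts that these functions already span $\mathcal{K} \cap L^\infty(\R^N)$. Hence Proposition~\ref{prop:nondegeneracy} reduces to the regularity claim $\mathcal{K} \subset L^\infty(\R^N)$.

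\textbf{Upgrading regularity by Moser iteration.} To prove this inclusion, I would run a Moser-type bootstrap adapted to the Hartree nonlocality, in the spirit of \cite{DiMeVald:book}. The initial input is $\varphi \in L^{2^*}(\R^N)$, $2^* = 2N/(N-2)$, from Sobolev embedding. The linear coefficient $a(x) := (p-1)(I_\lambda * W^p)W^{p-2}$ is in fact bounded: from \eqref{gs} one has $I_\lambda * W^p = -\Delta W / W^{p-1}$, and the explicit form \eqref{w function} gives $a(x) = (p-1) N(N-2)(1+|x|^2)^{-2} \in L^\infty(\R^N)$. The nonlocal piece $p W^{p-1}\, I_\lambda * (W^{p-1}\varphi)$ is controlled via Hardy--Littlewood--Sobolev, $\|I_\lambda * f\|_{L^r} \lesssim \|f\|_{L^s}$ with $\tfrac{1}{s} - \tfrac{1}{r} = \tfrac{N-\lambda}{N}$, exploiting that $W^{p-1}(x) \lesssim \langle x\rangle^{-(N-\lambda+2)}$ lies in $L^q(\R^N)$ for every $q > N/(N-\lambda+2)$. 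Testing \eqref{elliptic equation} against $|\varphi|^{2\beta}\varphi\, \eta^2$ with standard radial cutoffs $\eta$ and a free parameter $\beta \geq 0$, then combining with the HLS bound above, gives a Moser-type reverse-H\"older inequality
\[
\|\varphi\|_{L^{r_{k+1}}(\R^N)} \leq C_k \|\varphi\|_{L^{r_k}(\R^N)}, \qquad r_{k+1} = \gamma\, r_k,
\]
with $\gamma > 1$ uniform in $k$. Finitely many iterations then place $\varphi$ in $L^r(\R^N)$ for all $r \in [2^*,\infty)$, and a final $L^r \to L^\infty$ estimate for $-\Delta$ (with $r > N/2$) delivers $\varphi \in L^\infty(\R^N)$.

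\textbf{Main obstacle and conclusion.} The essential difficulty is that the convolution $I_\lambda * (W^{p-1}\varphi)$ does not localize to the support of the test cutoff $\eta$, so the classical localized Moser scheme does not apply verbatim. The remedy is to use HLS globally first, converting the nonlocal term into an effective multiplier whose $L^q$ norm is controlled by $\|\varphi\|_{L^s}$ at the cost of a computable loss of exponent, and only then to perform the iteration step. Verifying that the net exponent gain $\gamma$ stays uniformly bounded away from $1$, in particular in the endpoint cases $\lambda$ near $4$ and $N=3$, is the technical heart of the argument. Once $\mathcal{K} \subset L^\infty(\R^N)$ is established, the $L^\infty$-nondegeneracy of \cite{LLTX:Nondegeneracy} immediately forces $\mathcal{K} = \mathrm{span}\{\varphi_1,\dots,\varphi_{N+1}\}$, which is Proposition~\ref{prop:nondegeneracy}.
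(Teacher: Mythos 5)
Your overall strategy coincides with the paper's: reduce Proposition~\ref{prop:nondegeneracy} to the claim that any $\dot H^1$ solution of \eqref{elliptic equation} lies in $L^\infty(\R^N)$, invoke the $L^\infty$-nondegeneracy of \cite{LLTX:Nondegeneracy}, and prove the regularity by Moser iteration following \cite{DiMeVald:book}. The reduction step (symmetries reduce to $u=W$, differentiating the family produces $\varphi_1,\dots,\varphi_{N+1}$) is identical to Remark~\ref{rem:nondeg}, so the two arguments agree on the skeleton.

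Where they diverge is the implementation of the Moser scheme, and here the paper's choice is materially cleaner. The paper uses \emph{no spatial cutoff at all}: it tests the equation against $\varphi(u)\varphi'(u)$, where $\varphi$ is the truncated power from \eqref{deffunction} — equal to $|t|^\beta$ for $|t|\le T$ and linear for $|t|\ge T$. Because $\varphi'$ is globally bounded and $\varphi$ is convex, this is an admissible $\dot H^1$ test function for every $\beta$ and $T$, and the convexity kills the cross term $\int|\nabla u|^2\varphi(u)\varphi''(u)$; the nonlocal terms are then estimated on all of $\R^N$ by HLS with $W^{p-1}\in L^\infty$, and one passes $T\to\infty$ at the end. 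In your proposal you instead test against $|\varphi|^{2\beta}\varphi\,\eta^2$ with a spatial cutoff $\eta$, and you correctly anticipate that $I_\lambda*(W^{p-1}\varphi)$ does not localize to $\operatorname{supp}\eta$. This worry simply does not arise in the paper's version, so your ``main obstacle'' paragraph is aimed at a difficulty of your own making. More significantly, your test function $|\varphi|^{2\beta}\varphi\,\eta^2$ is not a priori admissible: with $\varphi$ only in $L^{2^*}$, the power $|\varphi|^{2\beta+1}$ need not be locally integrable, let alone in $\dot H^1$, for $\beta>0$. This is precisely what the paper's range truncation is designed to circumvent, and it is the real technical point your sketch leaves unaddressed. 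So the proposal is correct in spirit and identifies the right ingredients, but the iteration mechanism you describe would need to be replaced by (or supplemented with) a truncation in the value of $\varphi$ — exactly the step the paper's Appendix~\ref{appen regularity} supplies.
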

	
	\begin{remark} \label{rem:nondeg} The uniqueness and nondegeneracy of the ground state itself are very interesting and important problems, we can refer to \cite{FrankLen:frac lap, FrankLenSi:frac lap, LWei:KPI, Teschl:book} and references therein, and they also play a key role  in long-time dynamics of nonlinear dispersive equations (for example, NLS, NLW, Hartree, Boson star, etc.), especially in  spectral analysis of the linearized operator,  the construction of the threshold solutions, and the classification of the threshold solutions. Now we give some remarks about Proposition \ref{prop:nondegeneracy} as following.
		\begin{enumerate}
			\item[(a)] In \cite{LTX:Nondegeneracy}, The first, third and fourth authors show the nondegeneracy of the ground state in the Newtonian potential case $\lambda=4, N=6$ by using the spherical harmonic expansion of the Newtonian potential, which is related to Gegenbauer identity in \cite{ArkHan:book, Lenz:unique}.
			\item[(b)] Motivated by R.~Frank and E.~Lieb in \cite{FrankL:rearrange, FrankL:HG}, the authors make use of the spherical harmonic decomposition and the Funk-Hecke formula of the spherical harmonic functions  (see also \cite{ArkHan:book, DaiXu:book}) to obtain the nondegeneracy of the ground state in $L^{\infty}(\R^N)$ in \cite{LLTX:Nondegeneracy} for the general case .
			\item[(c)]  Based on the nondegeneracy of positive bubble solution in $L^{\infty}(\R^N)$ in \cite{LLTX:Nondegeneracy}, we only need show $L^{\infty}(\R^N)$-regularity of $\dot H^1(\R^N)$ solution of equation \eqref{elliptic equation}   to complete the proof of Proposition \ref{prop:nondegeneracy}, which will be shown by the Moser iteration method  (please refer to \cite{DiMeVald:book}, and references therein) in Appendix \ref{appen regularity}.
		\end{enumerate}
	\end{remark}
	
	After showing the nondegeneracy result in Proposition \ref{prop:nondegeneracy}, we can follow the standard argument in \cite{DuyMerle:NLS:ThresholdSolution, DuyMerle:NLW:ThresholdSolution, LiZh:NLS, LiZh:NLW, MiaoWX:dynamic gHartree} to study the dynamics of threshold solution and classify all radial threshold solutions. The classification of threshold solutions is more complicate than  that in Theorem \ref{belowthreshold}. In fact, the ground state $W$ is a new solution which
	doesn't satisfy the conclusion in Theorem \ref{belowthreshold}.
	Besides $W$, there also exist two other special radial threshold solutions
	$W^{\pm}$.
	
	\begin{theorem}\label{threholdsolution} There exist two radial solutions $W^\pm$ of \eqref{har} on the maximal lifespan interval $(T_{-}(W^{\pm}), T_{+}(W^{\pm}))$ with
		initial data $W^{\pm}_{0} \in \dot H^1(\R^N)$  such that
		\begin{enumerate}
			\item[\rm(a)] $E(W^\pm)=E(W)$, $T_+(W^{\pm})=\infty$ and
			\begin{equation*} \aligned
				\lim_{t\rightarrow +\infty}W^{\pm}(t) =  W \; \text{in} \; \dot H^1.
				\endaligned
			\end{equation*}
			
			\item[\rm(b)] $\big\|\nabla W^-_0\big\|_{2} < \big\|\nabla W\big\|_{2}$, $T_-(W^{-})=\infty$ and $W^-$ scatters for the negative time.
			
			\item[\rm(c)] $\big\|\nabla W^+_0\big\|_{2} > \big\|\nabla W\big\|_{2}$. Moreover, if $N\geq 5$, $T_-(W^{+})<\infty$.
		\end{enumerate}
	\end{theorem}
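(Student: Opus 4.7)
The plan is to follow the Duyckaerts--Merle construction of special threshold solutions, adapted to the Hartree nonlinearity as in \cite{MiaoWX:dynamic gHartree}, but now carried out unconditionally thanks to Proposition \ref{prop:nondegeneracy}. First I would linearize around $W$. Writing $u = W + h_1 + i h_2$, the first-order flow of $(h_1,h_2)$ is governed by the matrix operator $\mathcal{L} = \left(\begin{smallmatrix} 0 & L_- \\ -L_+ & 0 \end{smallmatrix}\right)$, where $L_-\varphi = -\Delta \varphi - (I_\lambda * W^p) W^{p-2}\varphi$ annihilates $W$ and
\[
L_+ \varphi = -\Delta\varphi - (p-1)(I_\lambda * W^p) W^{p-2}\varphi - p\, W^{p-1}\bigl(I_\lambda * (W^{p-1}\varphi)\bigr)
\]
annihilates $\partial_{x_j}W$ and $\Lambda W := \tfrac{N-2}{2}W + x\cdot\nabla W$. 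Proposition \ref{prop:nondegeneracy} forces $\ker L_+|_{\mathrm{rad}} = \R\,\Lambda W$ and $\ker L_-|_{\mathrm{rad}} = \R\,W$; combined with the variational characterization of $W$, $L_+$ has exactly one strictly negative simple eigenvalue on the radial sector, so a standard computation produces a pair $\pm e_0$, $e_0 > 0$, of simple real eigenvalues of $\mathcal{L}$ with smooth exponentially decaying radial eigenfunctions $\mathcal{Y}_\pm$ satisfying $\mathcal{L}\mathcal{Y}_\pm = \pm e_0 \mathcal{Y}_\pm$; the remainder of $\mathrm{Spec}(\mathcal{L})$ sits on $i\R$.

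Next, for each large integer $K$ and each sign $\pm$, I would construct a formal approximate solution
\[
V_K^\pm(t) := W + \sum_{k=1}^K e^{-k e_0 t}\, A_k^\pm, \qquad A_1^\pm = \pm\,\mathcal{Y}_-,
\]
with $A_k^\pm$ obtained recursively by inverting $\mathcal{L} + k e_0$ on the orthogonal complement of its kernel; the exponential spatial decay of $\mathcal{Y}_-$ propagates to every $A_k^\pm$. Substituting $u = V_K^\pm + h$ into \eqref{har} yields a semilinear equation for $h$ whose source term has size $O(e^{-(K+1) e_0 t})$. A contraction-mapping argument in a Strichartz-type norm weighted by $e^{(K+\frac12) e_0 t}$, using Hardy--Littlewood--Sobolev together with Strichartz estimates for $e^{it\Delta}$ to control the Riesz-potential convolution in the perturbation, produces a unique correction $h$ on $[T_0,\infty)$ with the required decay. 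Setting $W^\pm := V_K^\pm + h$ and extending backward via the local Cauchy theory yields the desired solutions; uniqueness (modulo the symmetry group of \eqref{har}) follows from the one-dimensional character of the unstable direction $\mathcal{Y}_+$ together with the modulation decomposition along $\ker\mathcal{L}$.

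Part (a) is then immediate from $\|W^\pm(t) - W\|_{\dot H^1} \lesssim e^{-e_0 t}$. For (b) and (c), one computes as $t \to +\infty$ that $\|\nabla W^\pm(t)\|_2^2 - \|\nabla W\|_2^2 = \mp 2 e^{-e_0 t}\,\alpha + O(e^{-2 e_0 t})$, where $\alpha$ is a nonzero constant arising from the pairing between $\mathcal{Y}_-$ and $\nabla W$; after fixing the sign of $\mathcal{Y}_-$ one obtains $\|\nabla W^-(t)\|_2 < \|\nabla W\|_2 < \|\nabla W^+(t)\|_2$ for large $t$. Since $E(W^\pm) = E(W)$, the variational characterization of $W$ prevents $\|\nabla W^\pm(t)\|_2$ from ever reaching $\|\nabla W\|_2$ on the maximal lifespan, so these strict inequalities persist throughout. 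Applying Theorem \ref{belowthreshold} to $W^-$ on $(-\infty,T_0]$ then gives global existence and scattering as $t \to -\infty$, which proves (b). For (c), with $\|\nabla W^+(t)\|_2 > \|\nabla W\|_2$ and $E(W^+)=E(W)$, an Ogawa--Tsutsumi type truncated radial virial identity (feasible in dimension $N \ge 5$ without the finite-variance assumption in Remark \ref{belowthreshold:blow up}) forces $T_-(W^+) < \infty$.

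The main obstacle is the contraction step. Since $\mathcal{L}$ is not self-adjoint and the nonlocal term $p\,W^{p-1}\bigl(I_\lambda * (W^{p-1}\,\cdot\,)\bigr)$ obstructs a naive $L^2$ spectral decomposition, one must work with the intrinsic bilinear pairing preserved by \eqref{har}, match weighted Strichartz norms to the spectral projections onto $\mathcal{Y}_\pm$ and $\ker\mathcal{L}$, and absorb the Riesz potential inside the nonlinear iteration via Hardy--Littlewood--Sobolev combined with Sobolev embedding compatible with the $e^{-k e_0 t}$ hierarchy. This is precisely where Proposition \ref{prop:nondegeneracy} is decisive: it pins down the full generalized kernel of $\mathcal{L}$ and enables the modulation framework underlying the fixed-point argument without any a priori spectral assumption.
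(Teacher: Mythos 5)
Your construction of the special solutions in forward time (approximate solutions $W + \sum_k e^{-ke_0 t} A_k$, contraction mapping in exponentially weighted Strichartz norms, invocation of Proposition~\ref{prop:nondegeneracy} to pin down $\ker\mathcal{L}$) matches the paper's Section~\ref{S:existence}, and your derivation of the sign of $\|\nabla W^\pm_0\|_2 - \|\nabla W\|_2$ from the pairing $\int \nabla W\cdot\nabla\mathcal{Y}_1\,dx\ne0$ together with Lemma~\ref{energytrapp} is correct. The genuine gaps are in the backward-time statements (b) and (c), which you have underestimated.

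For (b), you propose to conclude scattering backward in time by applying Theorem~\ref{belowthreshold}. That theorem requires the strict inequality $E(u_0)<E(W)$, whereas $E(W^-)=E(W)$ exactly; it is not applicable here. The paper instead proves Proposition~\ref{expdecay:subcase} in Section~\ref{S:convergence:sub}: a radial threshold solution with $\|\nabla u_0\|_2<\|\nabla W\|_2$ is global, and if it fails to scatter forward then its orbit is precompact (after rescaling), the modulation parameters converge, and $\delta(t)\to 0$ exponentially. Running the whole argument in both time directions forces $\delta\equiv 0$, i.e.\ $u=W$, a contradiction. Since $W^-(t)\to W$ as $t\to+\infty$, $W^-$ cannot scatter forward, so it is the no-forward-scattering case that scatters backward. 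None of this is captured by citing Theorem~\ref{belowthreshold}.

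For (c), you propose an Ogawa--Tsutsumi-type truncated virial argument in $N\ge5$. This classical concavity argument relies on a uniform lower bound $\partial_t^2 V_R(t)\leq -c<0$, which at sub-threshold energy follows from a uniform lower bound on $\|\nabla u(t)\|_2^2-\|\nabla W\|_2^2$. At threshold energy this fails: here $\delta(t)=\big|\,\|\nabla W^+(t)\|_2^2-\|\nabla W\|_2^2\,\big|\to 0$ exponentially as $t\to+\infty$, so $\partial_t^2 V_R$ degenerates and the naive concavity argument does not close. The paper's proof (Section~\ref{S:convergence:sup}) first establishes $\partial_t^2 V_R(t)\leq -2(p-1)\delta(t)$ and $\partial_t V_R(t)>0$ for all $t\geq 0$ via Lemmas~\ref{L:firder}--\ref{L:secderiv} (which require the $L^2$ bound on $U^a$, available when $N\ge5$), then the exponential integrability of $\delta$, and finally obtains a contradiction by applying the same analysis to $\overline{u}(-t)$ under the assumption $T_-(W^+)=\infty$: one gets $\partial_t V_R>0$ at $t=0$ from the forward analysis and $\partial_t V_R<0$ at $t=0$ from the backward one. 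You need this modulation-controlled virial argument, not a direct Ogawa--Tsutsumi bound.
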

	
	\begin{remark}
		There is no $L^2$-regularity for $W^+$ and $W$ when $N=3, 4$ from the construction in Section \ref{S:existence} and there is additional $L^2$-regularity for threshold solution in Proposition \ref{expdecay:supercase} in Section \ref{S:convergence:sup}.  We also expect that $T_-(W^{+})<\infty$ for the case $N=3, 4$.
	\end{remark}
	
	Next, we classify all radial threshold solutions as follows. 
	
	\begin{theorem}\label{classification} Suppose $u_0 \in \dot
		H^1\big(\R^N\big)$ is radial, and such that
		$ E(u_0 )=E(W )$. 	Let  $u$  be a solution of \eqref{har} with initial data $u_0 $
		and $I$ be the maximal interval of existence. Then the following
		holds:
		\begin{enumerate}
			\item[\rm(a)] If $\big\|\nabla u_0\big\|_{2} < \big\|\nabla W \big\|_{2}$, then $I=\R$. Furthermore,
			either $u=W^{-}$ up to the symmetries of the equation, or
			$\big\|u\big\|_{L^{2p}_tL^{\frac{2Np}{Np-2p-2}}_{x}}<\infty$.
			
			\item[\rm(b)] If $\big\|\nabla u_0\big\|_{2} = \big\|\nabla W \big\|_{2}$, then  $u= W$ up to the symmetries of the equation.
			
			\item[\rm(c)] If $\big\|\nabla u_0\big\|_{2} > \big\|\nabla W \big\|_{2}$, and $u_0 \in L^2(\R^N)$, then
			either $u=W^{+}$ up to the symmetries of the equation, or $I$ is
			finite.
		\end{enumerate}
	\end{theorem}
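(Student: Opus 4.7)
\emph{Part (b): rigidity.} The plan is to follow the Duyckaerts-Merle/Li-Zhang concentration-compactness and modulation strategy, with the crucial new input being Proposition~\ref{prop:nondegeneracy}. For (b), since $E(u_0)=E(W)$ and $\|\nabla u_0\|_2=\|\nabla W\|_2$, energy conservation forces
\[
\int_{\R^N}(I_\lambda*|u_0|^p)|u_0|^p\,dx=\int_{\R^N}(I_\lambda*W^p)W^p\,dx,
\]
so $u_0$ saturates the sharp inequality of Proposition~\ref{SharpConstant}. Hence $u_0=e^{i\theta_0}\lambda_0^{-(N-2)/2}W(\lambda_0^{-1}(\cdot+x_0))$; radiality forces $x_0=0$, the symmetries of \eqref{har} reduce this to $W$, and Cauchy uniqueness gives $u\equiv W$.

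\emph{Parts (a) and (c): trapping and compactness.} Energy conservation together with the below-threshold coercivity coming from Proposition~\ref{SharpConstant} yields the trapping $\|\nabla u(t)\|_2<\|\nabla W\|_2$ in case (a) and $\|\nabla u(t)\|_2>\|\nabla W\|_2$ in case (c) on the maximal interval $I$. In case (a) the trapping combined with Theorem~\ref{belowthreshold}-type arguments forces $I=\R$, and we must show that any non-scattering $u$ coincides with $W^-$ modulo symmetry. In case (c) with $u_0\in L^2(\R^N)$, the Virial identity gives a global/blow-up dichotomy, and any globally existing solution must be shown to equal $W^+$. By a Kenig-Merle type radial profile decomposition adapted to the threshold level, if $u$ fails to scatter (case (a)) or to blow up (case (c)) in, say, positive time, there exist parameters $\lambda(t)>0$ and $\theta(t)\in\R$ such that $\{e^{-i\theta(t)}\lambda(t)^{(N-2)/2}u(t,\lambda(t)\,\cdot)\}_{t\ge 0}$ is precompact in $\dot H^1$; applying Proposition~\ref{SharpConstant} to any limit point and using $E(u_0)=E(W)$ forces this limit to be $W$, so $u(t)\to W$ modulo modulation as $t\to+\infty$.

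\emph{Modulation and exponential convergence.} With $u$ modulationally close to $W$, decompose
\[
u(t,x)=e^{i\theta(t)}\lambda(t)^{-(N-2)/2}\big(W+g(t)\big)\big(\lambda(t)^{-1}x\big),
\]
with $g(t)$ orthogonal under the natural real bilinear pairing to $\varphi_{N+1}$ (scaling) and $iW$ (phase). Linearizing \eqref{har} at $W$ produces a matrix operator with a nonlocal convolution piece whose kernel, by Proposition~\ref{prop:nondegeneracy}, is exactly spanned by $\varphi_{N+1}$ and $iW$ (the translation modes $\nabla W$ being discarded by radiality). A Perron-Frobenius-type argument then yields a pair of simple real eigenvalues $\pm e_0$ with eigenfunctions $\mathcal{Y}^\pm$ and coercivity of the associated Hessian on the symplectic orthogonal to these modes and the kernel. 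Projecting $g$ onto $\mathcal{Y}^\pm$ produces coefficients $a_\pm(t)$ obeying $\dot a_\pm=\pm e_0\,a_\pm+O(\|g\|^2)$; since $g(t)\to 0$ in $\dot H^1$, the unstable channel must vanish, yielding $\|g(t)\|_{\dot H^1}\lesssim e^{-e_0 t}$ for large $t$.

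\emph{Uniqueness and main obstacle.} Two threshold solutions converging to $W$ exponentially in this way, with matching sign of the remaining spectral coefficient, must coincide: their difference satisfies a linear equation with $O(e^{-2e_0 t})$ forcing, closed by a Picard iteration in an exponentially weighted space using the spectral gap from Proposition~\ref{prop:nondegeneracy}. This delivers uniqueness of $W^\pm$ in cases (a) and (c). The hardest step will be the nonlocal convolution structure of the linearization: the standard Weinstein-type local coercivity arguments and the threshold-level profile decomposition both rely on pointwise manipulations that break down here, and have to be replaced by Hardy-Littlewood-Sobolev estimates that lean essentially on the simplicity of the kernel furnished by Proposition~\ref{prop:nondegeneracy}. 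A secondary subtlety is ruling out multiple nontrivial profiles at the exact threshold energy, which requires refining the compactness argument using the sharp inequality of Proposition~\ref{SharpConstant}.
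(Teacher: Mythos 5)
Your overall blueprint — variational rigidity for (b), trapping, convergence to $W$ modulo modulation, and then uniqueness via an exponentially weighted Picard iteration — is the right skeleton, and you correctly identify Proposition~\ref{prop:nondegeneracy} as the new ingredient that feeds the spectral theory. But there are two concrete gaps in how you get from trapping to exponential convergence.

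First, for case~(c) you invoke a Kenig--Merle profile decomposition to produce a precompact orbit $\{e^{-i\theta(t)}\lambda(t)^{(N-2)/2}u(t,\lambda(t)\cdot)\}$. That mechanism is available \emph{only} in the subcritical regime (Lemma~\ref{compactness} in the paper, which requires $\|\nabla u_0\|_2 < \|\nabla W\|_2$), because it rests on the minimality of the threshold for failure of scattering. In the supercritical regime there is no minimality statement and no precompactness lemma. The paper's Section~\ref{S:convergence:sup} instead works directly with the localized Virial quantity $V_R(t)$: using $u_0\in L^2$ and the bound $|\partial_t V_R|\lesssim R^2\delta(t)$ together with $\partial_t^2 V_R\le -2(p-1)\delta(t)$ (Lemmas~\ref{L:firder}--\ref{L:secderiv}), convexity forces $\partial_t V_R>0$, and integrating gives $\int_t^\infty\delta\le C\delta(t)$, hence exponential decay of $\delta$ by Gronwall. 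Your proposal has no substitute for this step in case~(c).

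Second, your exponential-convergence mechanism — projecting $g$ onto $\mathcal{Y}^\pm$ and arguing the unstable coefficient must vanish ``since $g(t)\to 0$'' — is circular at the point where it is needed. One does not know a priori that $g(t)\to 0$ for all large $t$; the sequential convergence $\delta(t_n)\to 0$ extracted from compactness (or from the ergodic-mean Lemma~\ref{meanconverg}) is not enough to close an ODE argument on the unstable mode, because the modulation decomposition is only defined on $D_{\delta_0}=\{t:\delta(t)<\delta_0\}$ and that set is not yet known to contain a half-line. The paper's route is to establish \emph{first} the integral estimate $\int_t^\infty\delta(s)\,ds\lesssim\delta(t)$ (or $\lesssim e^{-ct}$) via the Virial/convexity argument (Lemmas~\ref{L:inteexpondecay}, \ref{virialestimate-delta:positive}, \ref{control:parameter}), from which $\delta(t)\to 0$ pointwise and hence $D_{\delta_0}$ eventually contains $[T,\infty)$; only then do the modulation estimates of Lemma~\ref{estmodulpara} turn the decay of $\delta$ into decay of $\|u-W_{\theta_0,\mu_0}\|_{\dot H^1}$. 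The eigenmode-projection ODE you sketch is what underlies Proposition~\ref{improved decay} in the \emph{uniqueness} step (where one already knows $\|v(t)\|_{\dot H^1}\le Ce^{-ct}$), not the convergence step. Once Propositions~\ref{expdecay:supercase} and \ref{expdecay:subcase} are in hand, the closing argument you describe (Picard iteration with forcing $O(e^{-2e_0 t})$, i.e.\ Propositions~\ref{improved decay} and \ref{uniqueness} together with Corollary~\ref{cor:uniqueness}) is indeed how the paper finishes.

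Your part~(b) argument is correct and matches the paper's Lemma~\ref{energytrapp}(b).
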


    \begin{remark}\label{case(c)N=34}
     When $N=3, \, 4$	in the supercritical case (c), the above result shows that any $L^2(\R^N)$-solution blows up in both time directions from the fact that $W^+ \not \in L^2(\R^N)$.
    \end{remark}

	\begin{remark} Because of radial assumption, the symmetries of the equation here  refer to the symmetries under scaling, time translation, phase rotation and time reverse. The above result is a generalization of the classification of radial threshold solutions by Miao, Wu and the fourth author in \cite{MiaoWX:dynamic gHartree}, which consider the  case  $\lambda=4 <N $, and $p=2$.  For $\dot H^1(\R^N)$ critical NLS,  Q. Su and Z. Zhao classify nonradial, subcritical threshold solution in  \cite{SZhao}.  In addition, for subcritical NLS, we can refer to \cite{DuyRouden:NLS:ThresholdSolution}  by T. Duyckaerts and S. Roudenko,  \cite{CFRoud:threshold} by L. Campos, L. G. Farah and S. Roudenko.  and T.~Zhou also classifies threshold solutions of subcritical Hartree equation under spectral assumption in \cite{ZhouTao: Threshold}. 
	\end{remark}

	Last, the paper is organized as follows. In Section \ref{S:preli}, we recall the Cauchy problem and the properties of the ground state. We also state the spectral properties of the linearized operator $\mathcal{L}$ around $W$, which rely on the nondegeneracy of positive bubble solutions  in Proposition \ref{prop:nondegeneracy}. In Section \ref{S:existence}, we prove Theorem \ref{threholdsolution} except for the negative time behavior, in particular, we construct two special solutions $W^{\pm}$ of \eqref{har} except for the negative time behavior. In Section \ref{S:modulation}, we discuss the modulational stability around $W$. In Section \ref{S:convergence:sup} and Section \ref{S:convergence:sub}, we study the solutions with initial data satisfying Theorem \ref{classification} part (a) and (c), and also obtain the proof of Theorem \ref{threholdsolution} for the negative time behavior. In Section \ref{S:uniqueness}, we analyze the property of the exponentially decay solution of the linearized equation to establish the uniqueness of the special solutions and this will imply the proof of Theorem \ref{classification}. In appendix \ref{appen regularity}, we show $L^{\infty}(\R^N)$-regularity of $\dot{H}^{1}(\R^N)$ solution of equation \eqref{elliptic equation} by the Moser iteration method. In appendix \ref{appen CoerH} and appendix \ref{appen-spectralprop}, we prove the positivity and spectral properties of the linearized operator in Proposition \ref{coerH} and Proposition \ref{spectral}.\qed
	
	\noindent \subsection*{Acknowledgements.}
	G. Xu was partly supported by National Key Research and Development Program of China (No.~2020YFA0712900) and by NSFC (No. ~12371240). X. Tang were partly supported by NSFC (No.~12001284). 
	
	%
	%
	%
	%
	
	\section{Preliminaries}\label{S:preli}
	
	In this section, we give some preliminary results for the context.
	
	\subsection{The linear estimates and the Cauchy problem.} In this subsection, we firstly recall some
	results on the Strichartz estimate and Cauchy problem of \eqref{har}. Let $I$ be an
	interval, and denote
	\begin{equation*}
		\aligned Z(I):=L^{2p}\left(I; L^{\frac{2Np}{Np-2p-2}}({\R^N})\right), \;
		S(I):= L^{2p}\left(I; L^{\frac{2Np}{Np-2}}({\R^N})\right),
		\endaligned
	\end{equation*}
	\begin{equation*}
		\aligned 
		N(I):=L^{\frac{2p}{2p-1}}\left(I; L^{\frac{2Np}{Np+2}}({\R^N})\right),\;l(I):= Z(I)\cap L^{2p}\left(I; \dot{W}^{1, \frac{2Np}{Np-2}}({\R^N})\right),
		\endaligned
	\end{equation*}
	\begin{equation*}
		\aligned 
		\big\|u\big\|_{l(I)}:= \big\|u\big\|_{Z(I)}+\big\|\nabla
		u\big\|_{S(I)}.	
		\endaligned
	\end{equation*}
	Here we write $N(I)$ to represent the inhomogeneous space-time norm on $I\times \R^N$ and the single $N$ to represent spatial dimension in the context without confusion.
	
	A solution of \eqref{har} on an interval $I$ with $0\in I$ is a
	function $u\in C^0(I, \dot H^1(\R^N))$ such that $u\in Z(J)$ for all
	interval $J\Subset I$ and
	\begin{equation*}
		\aligned u(t)= e^{it\Delta } u_0 + i \int^t_0 e^{i(t-s)\Delta
		}\left( I_{\lambda}*|u(s,\cdot)|^p \right)(x)\; |u|^{p-2}u(s,x)\;
		ds.
		\endaligned
	\end{equation*}
	
	We have
	
	\begin{lemma}[\cite{Caz:book, tao:book}]\label{Striestimate} Consider
		\begin{equation}\label{lin}
			\left\{
			\aligned i\partial_t u + \Delta u =& \; f,\quad x\in \R^N,\; t\in [0, T),\\
			u(0)=&\; u_0 \in \dot H^1(\R^N),
			\endaligned\right.
		\end{equation}
		where $ \nabla  f \in N(0, T)$, then for some constant $C>0$, we have
		\begin{equation*}
			\aligned \sup_{t\in [0,T)}\big\| u\big\|_{\dot H^1} +
			\big\|u\big\|_{l(0,T)}   \leq C \left( \big\|u_0\big\|_{\dot H^1} +
			\big\|\nabla f\big\|_{N(0,T)}\right).
			\endaligned
		\end{equation*}
	\end{lemma}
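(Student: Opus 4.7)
The plan is to reduce this Strichartz-type inequality to the classical Keel--Tao estimates for the free Schr\"odinger group, combined with a Sobolev embedding to recover the $Z$-norm.

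First, I would verify the admissibility of the pair $(q,r) := (2p, \tfrac{2Np}{Np-2})$ in dimension $N$: a direct computation gives
\begin{equation*}
\frac{2}{q} + \frac{N}{r} = \frac{1}{p} + \frac{Np-2}{2p} = \frac{N}{2},
\end{equation*}
with $2 < r < \infty$ under the assumptions $N \geq 3$, $p \geq 2$, so $(q,r)$ is a non-endpoint Schr\"odinger admissible pair. Its dual $(q',r') = (\tfrac{2p}{2p-1}, \tfrac{2Np}{Np+2})$ is exactly the pair appearing in the definition of $N(0,T)$.

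Next, since $\nabla$ commutes with $i\partial_t + \Delta$, the function $\nabla u$ solves
\begin{equation*}
i\partial_t(\nabla u) + \Delta(\nabla u) = \nabla f, \qquad (\nabla u)(0) = \nabla u_0 \in L^2(\R^N).
\end{equation*}
Applying the standard Strichartz package (the conservation-of-$L^2$ energy estimate together with the homogeneous and inhomogeneous estimates for admissible/dual-admissible pairs, as in \cite{Caz:book,tao:book}) to this equation for the pair $(q,r)$ yields
\begin{equation*}
\sup_{t\in[0,T)}\|\nabla u(t)\|_{L^2_x} + \|\nabla u\|_{S(0,T)} \leq C\left(\|\nabla u_0\|_{L^2_x} + \|\nabla f\|_{N(0,T)}\right).
\end{equation*}

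Finally, I would recover the $Z$-norm via the Sobolev embedding $\dot W^{1,r}(\R^N) \hookrightarrow L^{r^*}(\R^N)$, where $\tfrac{1}{r^*} = \tfrac{1}{r} - \tfrac{1}{N}$. With $r = \tfrac{2Np}{Np-2}$ one computes $r^* = \tfrac{2Np}{Np-2p-2}$, which is the spatial exponent of $Z$; the denominator is positive in the admissible parameter range since $Np - 2p = 2N - \lambda \geq 2N - 4 > 2$ for $N \geq 3$ (using $\lambda \leq 4$ and the exclusion $(N,\lambda) \neq (3,4)$ coming from $\lambda < N$). Taking the pointwise-in-$t$ embedding to the $2p$-th power and integrating over $(0,T)$ gives $\|u\|_{Z(0,T)} \lesssim \|\nabla u\|_{S(0,T)}$, which added to the previous display completes the proof. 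No genuinely hard step appears; the main (and only) obstacle is bookkeeping the exponents and confirming that the Sobolev admissibility and Strichartz admissibility conditions are simultaneously compatible, which the algebra above arranges.
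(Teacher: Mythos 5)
Your proof is correct and is exactly the standard derivation the paper has in mind when it cites \cite{Caz:book, tao:book}: commute $\nabla$ through $i\partial_t+\Delta$, apply the (non-endpoint) Keel--Tao Strichartz estimates to $\nabla u$ for the admissible pair $\big(2p,\tfrac{2Np}{Np-2}\big)$ with dual $\big(\tfrac{2p}{2p-1},\tfrac{2Np}{Np+2}\big)$, and recover the $Z$-component of $l(0,T)$ via the pointwise-in-$t$ Sobolev embedding $\dot W^{1,\frac{2Np}{Np-2}}(\R^N)\hookrightarrow L^{\frac{2Np}{Np-2p-2}}(\R^N)$. The exponent bookkeeping and the verification that $Np-2p-2>0$ throughout the parameter range are all accurate.
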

	
	\begin{lemma}[\cite{Gra04:book, Ste:70:book}]\label{L:hardy}
		For $\alpha \in (0, N)$, then  there exists a constant $C(N,\alpha)
		$ such that for any $r\in (\frac{N}{N-\alpha}, \infty)$,
		\begin{equation*}
			\left\|\int_{\R^N}\frac{f(y)}{|x-y|^{N-\alpha}} \; dy
			\right\|_{L^r(\R^N)} \leq C(N,\alpha)
			\big\|f\big\|_{L^{\frac{Nr}{N+\alpha r}}(\R^N)}.
		\end{equation*}
	\end{lemma}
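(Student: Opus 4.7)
The plan is to recognize this as the classical Hardy--Littlewood--Sobolev inequality, which the authors cite from \cite{Gra04:book, Ste:70:book}; the natural route I would take is the standard weak-type plus Marcinkiewicz interpolation strategy. I would first record the scaling: with $q := \tfrac{Nr}{N+\alpha r}$ one has $\tfrac{1}{q}-\tfrac{1}{r}=\tfrac{\alpha}{N}$, and the hypothesis $r > \tfrac{N}{N-\alpha}$ is precisely the condition $q > 1$. A dilation check of both sides under $f(x)\mapsto f(\mu x)$ rules out any other scaling relation between the exponents, and the endpoint $q=1$ is genuinely excluded for the strong bound.

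For the main argument, I would fix $f \in L^q(\R^N)$, normalise $\|f\|_q = 1$, and for each $\delta > 0$ split the Riesz kernel $K(x) := |x|^{-(N-\alpha)}$ as
\[
K = K \chi_{\{|x|<\delta\}} + K \chi_{\{|x|\geq\delta\}} =: K^{\mathrm{in}}_\delta + K^{\mathrm{out}}_\delta.
\]
A dyadic decomposition of $B_\delta$ into annuli $\{2^{-k-1}\delta \leq |y| < 2^{-k}\delta\}$ bounds each annular integral by a constant times $(2^{-k}\delta)^\alpha$ times a ball average of $|f|$, and summing geometrically gives $|K^{\mathrm{in}}_\delta \ast f|(x) \leq C \delta^\alpha Mf(x)$, where $M$ is the Hardy--Littlewood maximal operator. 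For the outer piece, H\"older's inequality yields
\[
|K^{\mathrm{out}}_\delta \ast f|(x) \leq \|K^{\mathrm{out}}_\delta\|_{q'} \|f\|_q = C\,\delta^{\alpha - N/q},
\]
with the $L^{q'}$-norm finite because $q < \tfrac{N}{\alpha}$. For any $\lambda > 0$ I would then select $\delta$ so that the outer bound is at most $\lambda$: the identity $\tfrac{N}{q}-\alpha = \tfrac{N}{r}$ forces $\delta \sim \lambda^{-r/N}$, and the weak $(q,q)$ bound for $M$ produces
\[
\bigl|\{\,x : |K\ast f|(x) > 2\lambda\,\}\bigr| \leq \bigl|\{\,x : C\delta^\alpha Mf(x) > \lambda\,\}\bigr| \leq C \lambda^{-r},
\]
which is the weak-type $L^q \to L^{r,\infty}$ estimate.

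To upgrade from weak to strong $L^r$, I would invoke Marcinkiewicz interpolation between two such weak-type estimates at nearby exponents, all in the open range $q > 1$. The main obstacle is essentially bookkeeping: one must check that the power of $\lambda$ emerging in the distributional bound is exactly $r$, which works out precisely because of the scaling $\tfrac{1}{q}-\tfrac{1}{r}=\tfrac{\alpha}{N}$, and that the maximal function step genuinely requires $q > 1$, which is the sole analytic reason behind the exclusion of the endpoint $r = \tfrac{N}{N-\alpha}$ in the statement.
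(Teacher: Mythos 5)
The paper does not prove Lemma~\ref{L:hardy}; it is cited directly from \cite{Gra04:book, Ste:70:book} as the Hardy--Littlewood--Sobolev inequality, so there is no in-paper proof to compare against. Your argument is the standard and correct one: the exponent bookkeeping ($\tfrac{1}{q}-\tfrac{1}{r}=\tfrac{\alpha}{N}$, $q>1 \Leftrightarrow r>\tfrac{N}{N-\alpha}$, $q<\tfrac{N}{\alpha}$ automatic) checks out, the near/far split with the dyadic maximal-function bound $|K^{\mathrm{in}}_\delta * f|\lesssim \delta^\alpha Mf$ and the H\"older bound $\|K^{\mathrm{out}}_\delta\|_{q'}\sim\delta^{-N/r}$ are right, optimizing $\delta$ gives the weak $(q,r)$ estimate, and Marcinkiewicz interpolation in the open range upgrades to strong type. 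One small imprecision in your closing sentence: the maximal-function step itself is fine at $q=1$ (it only needs the weak $(1,1)$ bound, and the weak-type HLS estimate does in fact hold at the endpoint $r=\tfrac{N}{N-\alpha}$); what genuinely requires $q>1$ is the Marcinkiewicz interpolation used to pass from weak to strong type, since one must work on an open neighborhood of exponents with $q$ bounded away from~$1$.
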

	
	\begin{lemma}[\cite{Bahouri-gerard, GMO, KillipVisan:clay}]\label{L:sobolev}
		For every $f\in\dot H^1(\R^N)$, there is a constant $C$ such that
		\begin{equation*}
			\big\|f\big\|_{L^{\frac{2N}{N-2}}(\R^N)} \leq C
			\big\|f\big\|^{\frac{N-2}{N}}_{\dot H^1(\R^N)}
			\big\|f\big\|^{\frac{2}{N}}_{\dot B^1_{2,\infty}(\R^N)}.
		\end{equation*}
	\end{lemma}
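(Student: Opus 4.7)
The inequality is a refined (Gérard--Meyer--Oru type) Sobolev embedding, in which the factor $\|f\|_{\dot B^1_{2,\infty}}$ measures how concentrated $f$ is across Littlewood--Paley pieces. My plan is to combine a critical Besov embedding with an elementary $\ell^p$-interpolation; this is standard but delicate at the endpoint summability index.

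First, I would reduce to a Besov estimate via the critical Besov embedding
\[
\|f\|_{L^{\frac{2N}{N-2}}(\R^N)}\;\lesssim\;\|f\|_{\dot B^{1}_{2,\,2N/(N-2)}(\R^N)},
\]
which holds because the parameters $s=1$, $p=2$, $r=2N/(N-2)$, $q=r$ satisfy the Jawerth scaling relation $s-N/p=-N/r$ with $p\le r$ and $q\le r$. One standard proof of this embedding uses the $L^p$-boundedness of the Littlewood--Paley square function together with an atomic/molecular decomposition of $\dot B^{s}_{p,q}$.

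Next, I would perform an $\ell^p$-type interpolation on the summability index. Setting $a_j:=2^j\|\Delta_j f\|_{L^2}$, Littlewood--Paley theory gives $\|f\|_{\dot H^1}^{2}\approx \sum_j a_j^{2}$ while $\|f\|_{\dot B^1_{2,\infty}}=\sup_j a_j$. The elementary inequality $\sum_j a_j^{p}\le \bigl(\sup_j a_j\bigr)^{p-2}\sum_j a_j^{2}$ for any $p\ge 2$, specialized to $p=2^*=2N/(N-2)$, yields
\[
\|f\|_{\dot B^{1}_{2,\,2^*}}^{2^*}\;\lesssim\;\|f\|_{\dot B^1_{2,\infty}}^{\,2^*-2}\,\|f\|_{\dot H^1}^{\,2}.
\]
Taking $2^*$-th roots and using $(2^*-2)/2^*=2/N$ and $2/2^*=(N-2)/N$ gives
\[
\|f\|_{\dot B^{1}_{2,\,2^*}}\;\lesssim\;\|f\|_{\dot B^1_{2,\infty}}^{2/N}\,\|f\|_{\dot H^1}^{(N-2)/N},
\]
which combined with the embedding of the first step is precisely the claimed refined Sobolev inequality.

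The main obstacle is the endpoint Besov embedding $\dot B^{1}_{2,\,2^*}\hookrightarrow L^{2^*}$; once that is taken as a black-box fact, the remaining argument is an algebraic exercise in exponents. If one wanted to bypass quoting Jawerth, an alternative (and more quantitative) route is a direct Littlewood--Paley layer-cake argument: for each $\lambda>0$ choose $J=J(\lambda)$ so that $2^{J(N-2)/2}\|f\|_{\dot B^1_{2,\infty}}\sim \lambda/2$, dominate $\|f_{<J}\|_{L^\infty}\le \lambda/2$ by Bernstein, and estimate $|\{|f_{\ge J}|>\lambda/2\}|$ via Chebyshev in $L^2$ together with the orthogonality bound $\|f_{\ge J}\|_{L^2}^{2}\lesssim 2^{-2J}\|f\|_{\dot H^1}^{2}$; the layer-cake integral is borderline at the critical exponent, so closing it requires interpolating between the $L^2$- and $L^{2^*}$-Chebyshev bounds. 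Either route leads to the same final exponents $2/N$ and $(N-2)/N$.
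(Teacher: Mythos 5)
The paper does not prove this lemma — it simply cites \cite{Bahouri-gerard, GMO, KillipVisan:clay} — so there is no in-paper argument to compare against. Your proof, however, is correct as written, and it is worth noting that it takes a slightly different route from the one in those references.

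Your two-step argument (critical Besov embedding $\dot B^{1}_{2,2^*}\hookrightarrow L^{2^*}$, then the elementary $\ell^2$--$\ell^\infty$ interpolation on the Littlewood--Paley coefficients) is clean, and the exponent bookkeeping $(2^*-2)/2^*=2/N$, $2/2^*=(N-2)/N$ is exactly right. The only caveat is that Step 1 is doing essentially all the work: the endpoint embedding $\dot B^{s}_{p,q}\hookrightarrow L^{r}$ with $s-N/p=-N/r$ for all $q\le r$ (not merely $q\le\min(p,2)$) is itself a refined statement, typically obtained either from the Jawerth--Franke theory or, equivalently, by real interpolation of the weak-type bounds $\dot B^{s_i}_{p,\infty}\hookrightarrow L^{r_i,\infty}$ to get $\dot B^{1}_{2,q}\hookrightarrow L^{2^*,q}$ and then using $L^{2^*,2^*}=L^{2^*}$. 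It would be worth flagging that you are invoking the Lorentz-refined version, since the more commonly quoted Besov$\to$Lebesgue embedding requires $q\le 2$.

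In the cited references the proof is your ``alternative'' route, the Littlewood--Paley layer-cake argument, and I want to push back gently on your worry that it is ``borderline'' and requires interpolating between Chebyshev bounds. It does not: after choosing $J(\lambda)$ so that $\|f_{<J}\|_{L^\infty}\le\lambda/2$ and bounding $|\{|f|>\lambda\}|\lesssim\lambda^{-2}\sum_{j\ge J(\lambda)}\|\Delta_j f\|_{L^2}^2$, the naive bound $\sum_{j\ge J}\|\Delta_j f\|_{L^2}^2\le 2^{-2J}\|f\|_{\dot H^1}^2$ indeed leads to a divergent $\int\lambda^{-1}\,d\lambda$, but the correct move is to keep the $j$-sum and apply Fubini, swapping the order of $\int_0^\infty\lambda^{2^*-3}(\cdots)\,d\lambda$ and $\sum_j$. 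For each fixed $j$ the constraint $j\ge J(\lambda)$ confines $\lambda$ to $\lambda\lesssim 2^{j(N-2)/2}\|f\|_{\dot B^1_{2,\infty}}$, and since $2^*-2>0$ the $\lambda$-integral converges and produces a factor $\bigl(2^{j(N-2)/2}\|f\|_{\dot B^1_{2,\infty}}\bigr)^{2^*-2}=2^{2j}\|f\|_{\dot B^1_{2,\infty}}^{2^*-2}$, whence $\|f\|_{L^{2^*}}^{2^*}\lesssim\|f\|_{\dot B^1_{2,\infty}}^{2^*-2}\|f\|_{\dot H^1}^{2}$ directly. This is both more elementary (no Besov embedding black box) and closes cleanly; it is the argument you would find in Bahouri--G\'erard and G\'erard--Meyer--Oru. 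Either route is acceptable, but the Fubini swap is the standard resolution of the ``borderline'' issue, not real interpolation.
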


	\begin{theorem}[\cite{Arora: wellposedness, Arora:LWP, Arora:local wellposedness}]\label{T:local}
		For any $u_0 \in \dot H^1(\R^N)$ and $t_0\in \R$, there exists a
		unique maximal-lifespan solution $u :I\times \R^N \rightarrow \C$ to
		\eqref{har} with   $u(t_0)=u_0$. This solution also has the
		following properties:
		
		\begin{enumerate}
			\item[\rm(a)]   $I$ is an open neighborhood of $t_0$.
			\item[\rm(b)]  The energy of the solution $u$ are conserved,
			that is, for all $t\in I$, we have
			\begin{equation*}
				\aligned E(u(t))=E(u_0).
				\endaligned
			\end{equation*}
			\item[\rm(c)]   If $u^{(n)}_0$ is a
			sequence converging to $u_0$ in $\dot H^1 $ and
			$u^{(n)}:I^{(n)}\times \R^N \rightarrow \C$ are the associated
			maximal-lifespan solutions, then $u^{(n)}$ converges locally
			uniformly to $u$.
			\item[\rm(d)]   There exists $\eta_0$, such that if $\big\|u_0\big\|_{\dot H^1}<\eta_0$,
			then $u$ is a global solution.  Indeed, the solution also scatters to $0$
			in $\dot H^1(\R^N)$.
		\end{enumerate}
	\end{theorem}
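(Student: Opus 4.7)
The plan is a standard Strichartz-based fixed-point argument. Let $F(u):=(I_{\lambda}*|u|^{p})|u|^{p-2}u$ and define the Duhamel map
$$\Phi(u)(t):=e^{i(t-t_0)\Delta}u_0+i\int_{t_0}^{t}e^{i(t-s)\Delta}F(u)(s)\,ds.$$
By Lemma~\ref{Striestimate}, local well-posedness reduces to proving the nonlinear estimates
\begin{equation*}
\|\nabla F(u)\|_{N(I)}\lesssim \|u\|_{l(I)}^{2p-1},\qquad \|\nabla(F(u)-F(v))\|_{N(I)}\lesssim(\|u\|_{l(I)}+\|v\|_{l(I)})^{2p-2}\|u-v\|_{l(I)}.
\end{equation*}

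To establish the first of these (the second is analogous), I would use the pointwise bound $|\nabla F(u)|\lesssim \bigl(I_{\lambda}*(|u|^{p-1}|\nabla u|)\bigr)|u|^{p-1}+(I_{\lambda}*|u|^{p})|u|^{p-2}|\nabla u|$, apply H\"older in space, the Hardy--Littlewood--Sobolev inequality from Lemma~\ref{L:hardy} to absorb the Riesz potential, and then H\"older in time. The exponents defining $Z(I)$, $S(I)$ and the dual space $N(I)$ are tailored so that they balance exactly at the scaling-critical value $p=(2N-\lambda)/(N-2)$; in particular $\lambda\leq 4$ guarantees $p\geq 2$, which is needed for $|u|^{p-2}$ to have a meaningful derivative. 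Given these bounds, parts (a) and (d) follow by the contraction mapping principle on a small ball in $l(I)$: for (a) we choose $I$ a short open interval about $t_0$ on which $\|e^{i(t-t_0)\Delta}u_0\|_{Z(I)}$ is sufficiently small; for (d) we note that when $\|u_0\|_{\dot H^1}<\eta_0$ is small, Strichartz gives $\|e^{it\Delta}u_0\|_{l(\R)}\leq C\eta_0$, so the contraction is global, and the finiteness of $\|u\|_{Z(\R)}$ then yields scattering by the standard Cauchy-in-time argument.

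For the energy conservation in (b), the standard route is to mollify the initial data to $u_0^{(n)}\in H^2\cap \dot H^1$, obtain regular solutions $u^{(n)}$ for which $\frac{d}{dt}E(u^{(n)})=0$ follows by a direct computation using the equation, and pass to the limit using the continuity of $E$ on $\dot H^1$ together with the stability provided by (c). Part (c) itself is the Lipschitz dependence of the Duhamel map on the initial data, obtained by the same estimates used for existence: on any compact subinterval of the common lifespan, one first propagates smallness of $\|e^{it\Delta}(u_0^{(n)}-u_0)\|_{l}$ through a finite partition of $I$ and then upgrades to local uniform convergence in $\dot H^1$.

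The main obstacle will be the nonlinear estimate for the nonlocal term $(I_{\lambda}*|u|^{p})|u|^{p-2}u$: one must align four sets of exponents (two H\"older pairings in space, one Hardy--Littlewood--Sobolev, one H\"older in time) while respecting $\dot H^1$-criticality and the constraint $\frac{N}{N-\lambda}<r<\infty$ required by Lemma~\ref{L:hardy}. Once these numerology checks are carried out, everything else in Theorem~\ref{T:local} is mechanical.
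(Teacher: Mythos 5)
This theorem is stated with citation only; the paper does not supply its own proof and instead refers to the Arora--Roudenko works. Your Strichartz-plus-contraction blueprint is the standard (and correct) route to this local theory for energy-critical nonlocal equations, and it is what those cited papers do: the Duhamel fixed point in $l(I)$ with smallness furnished either by shrinking $I$ (using continuity of $t\mapsto\|e^{i(t-t_0)\Delta}u_0\|_{Z(I)}$ together with density of $\dot H^1\cap\dot H^2$) or by small data, the nonlinear estimate $\|\nabla F(u)\|_{N(I)}\lesssim\|u\|_{l(I)}^{2p-1}$ via pointwise bound on $\nabla F$, H\"older in space, Lemma~\ref{L:hardy} with $\alpha=N-\lambda$, and H\"older in time; continuous dependence from the same Lipschitz bound patched over a finite cover of a compact subinterval; and energy conservation by regularizing the data and passing to the limit through (c). A couple of points to be careful about that you gesture at but do not carry out: (i) the H\"older/HLS numerology has an admissibility constraint $r>\frac{N}{N-\alpha}=\frac{N}{\lambda}$ in Lemma~\ref{L:hardy}, and one should check that the intermediate Lebesgue exponents land strictly in the allowed range for all $N\geq 3$, $0<\lambda\leq 4$; (ii) for $2\leq p<3$ the difference estimate $\|\nabla(F(u)-F(v))\|_{N(I)}$ is not Lipschitz in $\|u-v\|_{l(I)}$ but only H\"older of order $p-2$ in some factors (as Remark~\ref{calculation of nonlinearity term} records), so the contraction/stability argument must be phrased to tolerate a H\"older modulus rather than a pure Lipschitz one. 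With those caveats addressed, your outline matches the standard proof to which the paper defers.
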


	\subsection{Properties of ground state.}
	
	From \eqref{gs} and Proposition \ref{SharpConstant}, we have
	\begin{equation}\label{KinEnergy}
		\aligned \big\|\nabla W \big\|^2_{2}=C(N,\lambda)^{-\frac{p}{p-1}}, \quad E(W)=\frac{p-1}{2p}C(N,\lambda)^{-\frac{p}{p-1}}
		.
		\endaligned
	\end{equation}
	
	Using the characterization of $W$ in Proposition \ref{SharpConstant}, the
	refined Sobolev inequality in Lemma \ref{L:sobolev}, the analogue
	concentration compactness principle (profile decomposition in $\dot
	H^1_{rad}$) to the proof of Proposition $3.1$ in
	\cite{MiaoXZ:10:blowup}, we have
	\begin{proposition}\label{P:static stability} Let  $u \in \dot H^1(\R^N)$ be radial and $E(u) = E (W)$.
		Then there exists a function $\varepsilon=\varepsilon (\rho)$, such that
		\begin{equation*}
			\aligned  \inf_{ \theta \in \R,\; \mu>0}  \big\| u_{\theta, \mu} -W
			\big\|_{\dot H^1} \leq \varepsilon (\delta(u)), \quad \lim_{\rho
				\rightarrow 0}\varepsilon (\rho)=0,
			\endaligned
		\end{equation*}
		where $u_{\theta, \mu}(x) = e^{i\theta} \mu^{-\frac{N-2}{2}}u(
		\mu^{-1} x)$, $\displaystyle \delta(u)=\Big| \int_{\R^N} \left(
		\big|\nabla u \big|^2 -\big|\nabla W \big|^2 \right) \; dx \Big| $.
	\end{proposition}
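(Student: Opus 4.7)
The plan is to argue by contradiction using a linear profile decomposition in $\dot H^1_{\mathrm{rad}}(\R^N)$ together with the rigidity part of Proposition \ref{SharpConstant}. Suppose the assertion fails; then there exist $\eta_0 > 0$ and a sequence $\{u_n\} \subset \dot H^1_{\mathrm{rad}}(\R^N)$ with $E(u_n) = E(W)$, $\delta(u_n) \to 0$, yet
\begin{equation*}
\inf_{\theta \in \R,\, \mu > 0} \big\|(u_n)_{\theta, \mu} - W\big\|_{\dot H^1} \geq \eta_0 \quad \text{for every } n.
\end{equation*}
From $\delta(u_n) \to 0$ together with \eqref{KinEnergy}, I would first deduce $\|\nabla u_n\|_2^2 \to \|\nabla W\|_2^2$, and combining with $E(u_n) = E(W)$ would force
\begin{equation*}
\int_{\R^N} \big(I_\lambda * |u_n|^p\big)|u_n|^p \, dx \longrightarrow \int_{\R^N} \big(I_\lambda * W^p\big) W^p \, dx.
\end{equation*}
Comparing with the value of $C(N,\lambda)$ in Proposition \ref{SharpConstant}, the sequence $\{u_n\}$ is a maximizing sequence for the sharp inequality there.

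Next I would apply the linear profile decomposition in $\dot H^1_{\mathrm{rad}}(\R^N)$, in which the radial setting removes the need for translation parameters: we obtain radial profiles $\{V^j\}$ and scales $\{\mu_n^j\}$ with pairwise orthogonality $|\log(\mu_n^j/\mu_n^k)| \to \infty$, an asymptotic Pythagorean decomposition of $\|\nabla u_n\|_2^2$, and a remainder $r_n^J$ with $\lim_{J \to \infty} \limsup_{n \to \infty} \|r_n^J\|_{L^{2N/(N-2)}} = 0$ via the refined embedding in Lemma \ref{L:sobolev}. Showing next that the nonlocal energy $\int (I_\lambda * |u_n|^p)|u_n|^p\,dx$ decouples along the profiles in the same fashion (see the obstacle below), and applying the sharp inequality to each individual profile, one concludes that in the limit exactly one profile survives, $r_n^J \to 0$ in $\dot H^1$, and the surviving profile saturates the sharp inequality. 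By the rigidity statement of Proposition \ref{SharpConstant} combined with radial symmetry, this profile must coincide with $W$ up to a scaling and a phase; this contradicts the lower bound on $\inf_{\theta, \mu}\|(u_n)_{\theta, \mu} - W\|_{\dot H^1}$ and completes the proof, with $\varepsilon(\rho)$ defined as the supremum of $\inf_{\theta,\mu}\|u_{\theta,\mu}-W\|_{\dot H^1}$ over radial $u$ with $E(u)=E(W)$ and $\delta(u)\leq \rho$.

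The main technical step is the asymptotic decoupling of the nonlocal Hartree-type energy along the profile decomposition. Unlike a local power nonlinearity, for which a standard Brezis-Lieb lemma suffices, one must control cross terms produced by profiles living at distinct scales inside the bilinear form attached to $I_\lambda$; this is carried out by a Brezis-Lieb type argument adapted to the Riesz potential combined with the Hardy-Littlewood-Sobolev estimate of Lemma \ref{L:hardy}, following the proof of Proposition 3.1 in \cite{MiaoXZ:10:blowup}.
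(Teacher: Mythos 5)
Your proposal is correct and follows essentially the same route as the paper's proof: argue by contradiction, apply the radial profile decomposition, decouple the nonlocal Hartree energy using Lemma~\ref{L:hardy}, Lemma~\ref{L:sobolev} and the orthogonality of scales, apply the sharp inequality of Proposition~\ref{SharpConstant} to the individual profiles to force exactly one nonzero profile that saturates the inequality, and then invoke the rigidity statement of Proposition~\ref{SharpConstant} to identify it with $W$ up to scaling and phase. The technical obstacle you flag — decoupling the bilinear Riesz-potential energy rather than a local power — is indeed the step the paper carries out by combining Lemma~\ref{L:hardy}, the refined Sobolev inequality, and the scale-orthogonality, exactly as you describe.
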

	
	\begin{proof} We follow the argument in Proposition $2.7$ in \cite{MiaoWX:dynamic gHartree} and argue by contradiction. Let $u_n \in \dot H^1$ be any radial sequence and satisfy
		\begin{equation}\label{kinetic:converg}
			E(u_n)=E(W), \; \text{and}\; \Big| \int_{\R^N} \left( \big|\nabla
			u_n \big|^2 -\big|\nabla W \big|^2 \right) \; dx \Big|
			\longrightarrow 0.
		\end{equation}
		Hence, from the profile decomposition in $\dot
		H^1_{rad}(\R^N)$, there exists a subsequence of $\{u_n\}^{\infty}_{n=1}$ and a
		sequence $\{U^{(j)}\}_{j\geq 1} $ in $\dot H^1_{rad}$ and for any
		$j\geq 1$, a family $\{\lambda^j_n\}$ such that
		
		\begin{enumerate}
			\item If $j\not= k$, then
			\begin{align*}
				\frac{\lambda^j_n}{\lambda^k_n} + \frac{\lambda^k_n}{\lambda^j_n}
				\longrightarrow +\infty, \text{}\; n\rightarrow +\infty
			\end{align*}
			\item For every $l\geq 1$, we have
			\begin{align*}
				u_n(x) = \sum^l_{j=1} \frac{1}{\left(  \lambda^j_n
					\right)^{(N-2)/2}} U^{(j)} \left( \frac{x}{\lambda^j_n}\right) +
				r^l_n(x), \; \text{with}\; \lim_{l\rightarrow
					+\infty}\limsup_{n\rightarrow+\infty} \big\|r^l_n\big\|_{\dot{B}^1_{2,\infty}} \rightarrow 0.
			\end{align*}
			\item We have
			\begin{align*}
				\big\|\nabla u_n\big\|^2_{L^2} = \sum^{l}_{j=1} \big\|\nabla
				U^{(j)}\big\|^2_{L^2} + \big\|\nabla r^l_n\big\|^2_{L^2} + o_n(1),
			\end{align*}
		\end{enumerate}
		By Lemma \ref{L:hardy}, Lemma \ref{L:sobolev}, the orthogonality
		between $\lambda^j_n$ and $\lambda^k_n$ for $j\not = k$, we have
		\begin{align*}	
			\lim_{l\rightarrow+\infty}\limsup_{n\rightarrow +\infty}
			\iint_{\R^N\times\R^N}
			I_{\lambda}(x-y)\left|r^l_n(x)\right|^p \left|r^l_n(y)\right|^p \;
			dxdy = 0,
		\end{align*}
		\begin{align*}
			\iint_{\R^N\times\R^N} I_{\lambda}(x-y)|u_n(x)|^p |u_n(y)|^p \; dxdy
			=\lim_{l\rightarrow+\infty}\sum^l_{j=1} \iint_{\R^N\times\R^N}
			I_{\lambda}(x-y)\left| U^{(j)}  (x)\right|^p\left| U^{(j)}
			(y)\right|^p \; dxdy .
		\end{align*}
		By Proposition \ref{SharpConstant}, we have
		\begin{align*}
			C(N,\lambda)^{-p} \iint_{\R^N\times\R^N}
			I_{\lambda}(x-y)\left| U^{(j)}  (x)\right|^p\left| U^{(j)}
			(y)\right|^p \; dxdy  \leq
			\big\|\nabla U^{(j)}\big\|^{2p}_{L^2},
		\end{align*}
		thus,
		\begin{align*}
			C(N,\lambda)^{-p} \sum^{l}_{j=1}\iint_{\R^N\times\R^N}
			I_{\lambda}(x-y)\left| U^{(j)}  (x)\right|^p\left| U^{(j)}
			(y)\right|^p \; dxdy  \leq
			\sum^{l}_{j=1}\big\|\nabla U^{(j)}\big\|^{2p}_{L^2}.
		\end{align*}
		This yields that
		\begin{align*}
			C(N,\lambda)^{-p}  \leq &
			\lim_{n\rightarrow+\infty}\limsup_{l\rightarrow+\infty}\frac{
				\displaystyle \sum^{l}_{j=1} \big\|\nabla
				U^{(j)}\big\|^{2p}_{L^2}}{\displaystyle \sum^{l}_{j=1}\iint_{\R^N\times\R^N}
				I_{\lambda}(x-y)\left| U^{(j)}  (x)\right|^p\left| U^{(j)}
				(y)\right|^p \; dxdy} \\
			\leq  & \lim_{n\rightarrow+\infty}\limsup_{l\rightarrow+\infty}
			\frac{ \displaystyle \left(\sum^{l}_{j=1} \big\|\nabla
				U^{(j)}\big\|^2_{L^2}\right)^p}{\displaystyle \sum^{l}_{j=1}\iint_{\R^N\times\R^N}
				I_{\lambda}(x-y)\left| U^{(j)}  (x)\right|^p\left| U^{(j)}
				(y)\right|^p \; dxdy}\\
			\leq & \lim_{n\rightarrow+\infty} \frac{ \displaystyle \big\|\nabla
				u_n \big\|^{2p}_{L^2}} {\displaystyle \iint_{\R^N\times\R^N}
				I_{\lambda}(x-y)\left| u_n  (x)\right|^p\left| u_n
				(y)\right|^p \; dxdy} \\
			= & C(N,\lambda)^{-p},
		\end{align*}
		where in the last step we use \eqref{kinetic:converg}. Therefore, we
		conclude that only one term $U^{(j)}$ is nonzero, and
		\begin{align*}
			C(N,\lambda)^{-p}  = \frac{ \displaystyle \big\|\nabla
				U^{(j_0)}\big\|^{2p}_{L^2}}{\displaystyle \iint_{\R^N\times\R^N}
				I_{\lambda}(x-y)\left| U^{(j_0)}  (x)\right|^p\left| U^{(j_0)}
				(y)\right|^p \; dxdy }.
		\end{align*}
		By Proposition \ref{SharpConstant} for the radial case, there exist $\theta_0\in [0, 2\pi)$,
		$\lambda_0>0$ such that
		\begin{equation*}
			U^{(j_0)}(x) = e^{i\theta_0} \lambda^{-\frac{N-2}{2}}_0W (
			\lambda^{-1}_0 x ).
		\end{equation*}
		This contradicts with the assumption and completes the proof.
	\end{proof}
	
	\subsection{The coercive property.}
	By the convex analysis in \cite{MiaoXZ:09:e-critical radial Har}, we  have
	\begin{align}
		\label{convexity} \forall u \in \dot H^1 , \;\; E(u)\leq E(W), \;\;
		\big\|\nabla u \big\|_{L^2} \leq \big\| \nabla W \big\|_{L^2}
		\Longrightarrow \frac{\big\|\nabla u \big\|^2_{L^2}}{\big\|\nabla W
			\big\|^2_{L^2}} \leq \frac{E(u)}{E(W)},\\
		\forall u \in \dot H^1 , \;\; E(u)\leq E(W), \;\; \big\|\nabla u
		\big\|_{L^2} \geq \big\| \nabla W \big\|_{L^2} \Longrightarrow
		\frac{\big\|\nabla u \big\|^2_{L^2}}{\big\|\nabla W \big\|^2_{L^2}} \geq
		\frac{E(u)}{E(W)}. \notag
	\end{align}
	This, together with  the energy conservation, the variational
	characterization of $W$ and the continuity argument, implies that
	\begin{lemma}\label{energytrapp}
		Let $u \in C(I,  \dot H^1(\R^N))$ be a radial solution of \eqref{har} with
		initial data $u_0$, and $I=(-T_-, T_+)$ its maximal interval of
		existence. Assume that $ E(u_0)=E(W),$ then
		\begin{enumerate}
			\item[\rm (a)] if $\big\|\nabla u_0\big\|_{2}< \big\|\nabla W\big\|_{2}$,
			then $\big\|\nabla u(t)\big\|_{2}< \big\|\nabla W\big\|_{2}$ for
			$t\in I$.
			\item[\rm (b)]  if $\big\|\nabla u_0\big\|_{2} = \big\|\nabla W\big\|_{2}$,
			then $u =W $ up to the symmetry of the equation.
			\item[\rm (c)] if $\big\|\nabla u_0\big\|_{2} >  \big\|\nabla W\big\|_{2}$,
			then $\big\|\nabla u(t)\big\|_{2} > \big\|\nabla W\big\|_{2}$ for $
			t\in I$.
		\end{enumerate}
	\end{lemma}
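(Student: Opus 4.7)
\textbf{Proof plan for Lemma \ref{energytrapp}.} The argument rests on three ingredients that are in hand: conservation of $E(u(t)) = E(u_0) = E(W)$ from Theorem \ref{T:local}(b), continuity of $t \mapsto \|\nabla u(t)\|_2$ on $I$ which follows from $u \in C(I, \dot H^1)$, and the rigid variational characterization of $W$ from Proposition \ref{SharpConstant}. The plan is to handle (b) directly via the rigidity of the sharp inequality, and then to deduce (a) and (c) from (b) by a continuity/intermediate value obstruction.

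I would begin with part (b). If $\|\nabla u_0\|_2 = \|\nabla W\|_2$ and $E(u_0) = E(W)$, then using \eqref{KinEnergy} and the definition of the energy one reads off
$$\int_{\R^N}\bigl(I_\lambda * |u_0|^p\bigr)|u_0|^p\,dx \;=\; C(N,\lambda)^p\, \|\nabla u_0\|_2^{2p},$$
so $u_0$ saturates the sharp Hartree--Sobolev inequality in Proposition \ref{SharpConstant}. That rigidity statement then gives $u_0 = e^{i\theta_0}\lambda_0^{-(N-2)/2}\,W\!\bigl(\lambda_0^{-1}(x+x_0)\bigr)$ for some parameters, and the radial hypothesis forces $x_0 = 0$. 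Comparing \eqref{gs} with \eqref{har} one sees that this rescaled, phased $W$ is a (time-independent) solution of \eqref{har}, so the uniqueness part of Theorem \ref{T:local} implies that $u(t)$ coincides with this stationary profile on all of $I$, i.e.\ $u = W$ up to the symmetries of the equation.

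For parts (a) and (c) I would argue by contradiction using continuity. Suppose (a) fails, so that there exists $t_1 \in I$ with $\|\nabla u(t_1)\|_2 \ge \|\nabla W\|_2$; since $\|\nabla u_0\|_2 < \|\nabla W\|_2$ and $t \mapsto \|\nabla u(t)\|_2$ is continuous, the intermediate value theorem supplies some $t^*$ between $0$ and $t_1$ with $\|\nabla u(t^*)\|_2 = \|\nabla W\|_2$. Energy conservation gives $E(u(t^*)) = E(W)$, so part (b) applied at time $t^*$ yields $u(t^*) = W$ up to symmetries, which by the uniqueness argument above makes $u$ equal to a rescaled, phased $W$ throughout $I$. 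This contradicts the strict inequality $\|\nabla u_0\|_2 < \|\nabla W\|_2$. Part (c) is symmetric: a downward crossing would again land on $\|\nabla u(t^*)\|_2 = \|\nabla W\|_2$ and reduce to (b). The convexity observations \eqref{convexity} are fully consistent with this argument (and usually invoked at this point), but they degenerate to trivial identities when $E(u)=E(W)$, so the real content is the continuity + rigidity obstruction.

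No step is genuinely delicate once Proposition \ref{SharpConstant} is granted: the strict dichotomy "either strictly below/above $\|\nabla W\|_2$ for all time, or identically $W$" is forced as soon as equality in the sharp inequality is as rigid as it is. The only small items to verify carefully are that the translation parameter $x_0$ vanishes under the radial hypothesis, and that a stationary solution of \eqref{gs} really does yield a stationary solution of the Hartree flow \eqref{har}, both of which are immediate.
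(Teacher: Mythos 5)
Your proof is correct and follows the same route the paper indicates (energy conservation, the rigidity of the sharp Hardy--Littlewood--Sobolev inequality in Proposition \ref{SharpConstant}, uniqueness of the Cauchy problem, and a continuity/intermediate-value argument); the paper itself only points to Lemma 2.8 of \cite{MiaoWX:dynamic gHartree} and Proposition 3.1 of \cite{MiaoXZ:09:e-critical radial Har}, and what you wrote is the standard content of those proofs. Your side remark that \eqref{convexity} degenerates to a tautology at threshold energy is accurate and a useful clarification of why the rigidity step, rather than the convexity inequality, is doing the work here.
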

	\begin{proof}The proof is analogue to Lemma $2.8$ in \cite{MiaoWX:dynamic gHartree} and Proposition 3.1 in \cite{MiaoXZ:09:e-critical radial Har}.
	\end{proof}
	
	\subsection{Monotonicity formula}
	We define the localized Virial term
	\begin{equation}\label{localV}
		\aligned V_R(t)=\int_{\R^N} \phi_R(x)\big|u(t,x)\big|^2 dx,\;
		\endaligned
	\end{equation}
	where $\phi(x)$ be a positive smooth radial function such that 
	$\phi(x)=\frac{|x|^2}{2}$ for $|x|\leq 1$, $\phi(x)=0$ for $|x|\geq 2$, $\phi''(r) \leq 1,\; r\geq 0$ and $\phi_R(x)=R^2\phi\left(\frac{x}{R}\right),R>0$, which is useful to analyze the gradient variant from W in Section \ref{S:modulation}. By simple calculations as in  \cite{MiaoWX:dynamic gHartree, MiaoXZ:09:e-critical radial Har, T. S and C. Xu: virial identity}, we have 
	
	\begin{lemma}\label{L:local virial}Let $u(t,x)$ be a radial solution to \eqref{har}, $V_R(t)$ be
		defined by \eqref{localV}, then
		\begin{align*}
			\partial_t V_R(t)=&\;  2\Im \int_{\R^N} \overline{u}\; \nabla u \cdot \nabla
			\phi_R\; dx, \\
			\partial^2_t V_R(t)=&\; 4 \int_{\R^N} \Big|\nabla u (t,x)\Big|^2 dx -4
			\int_{\R^N}  \left(\ I_{\lambda}*|u|^p\right)\; |u(x)|^p \; dx
			+ A_R\big(u(t)\big),
		\end{align*}
		where
		\begin{align*} A_R\big(u(t)\big)=&\;    \int_{\R^N} \left( 4\phi''\Big(\frac{|x| }{R }\Big)-4 \right) \big|\nabla u (t,x) \big|^2 dx
			+  \int_{\R^N} \big(-\Delta \Delta \phi_R(x) \big) \big| u(t,x)\big|^2 dx \\
			& + O\left(\int_{|x|\geq R}  \left(I_{\lambda}*|u|^p\right)\; |u(x)|^p \; dx\right).
		\end{align*}
	\end{lemma}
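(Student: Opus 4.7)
The plan is to differentiate $V_R$ twice in time using the equation $i\partial_t u = -\Delta u - G(u)$ with $G(u):=(I_\lambda * |u|^p)|u|^{p-2}u$, while tracking the errors produced by the cutoff $\phi_R$. For the first derivative, I use that $\bar u\,G(u) = (I_\lambda*|u|^p)|u|^p$ is pointwise real, so multiplying the equation by $\bar u$ and taking imaginary parts yields the local mass conservation law $\partial_t|u|^2 = -2\nabla\cdot\Im(\bar u\nabla u)$; integrating against $\phi_R$ and one integration by parts gives the stated expression for $\partial_t V_R$.

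For $\partial_t^2 V_R$, I differentiate the first identity in $t$ and substitute the equation again, producing a kinetic contribution coming from $\Delta u$ and a nonlocal contribution coming from $G(u)$. A standard virial integration by parts reduces the kinetic part to $4\int \partial_j\partial_k\phi_R\,\Re(\partial_j\bar u\,\partial_k u)\,dx - \int \Delta^2\phi_R\,|u|^2\,dx$. Since $u$ and $\phi_R$ are radial, $\partial_j\partial_k\phi_R\,\Re(\partial_j\bar u\,\partial_k u)=\phi''(|x|/R)\,|\nabla u|^2$; and as $\phi''(r)\equiv 1$ on $r\leq 1$, this extracts the main term $4\int|\nabla u|^2\,dx$ and the first two error pieces of $A_R$.

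The nonlocal part is the main technical step. The identity $\Re(\bar u\nabla G(u))-\Re(\overline{G(u)}\nabla u)=\nabla\bigl((I_\lambda*|u|^p)|u|^p\bigr)-\tfrac{2}{p}(I_\lambda*|u|^p)\nabla|u|^p$ reduces this contribution, after one integration by parts, to
\[
-2\int \Delta\phi_R\,(I_\lambda*|u|^p)|u|^p\,dx \;-\; \tfrac{4}{p}\int \nabla\phi_R\cdot (I_\lambda*|u|^p)\nabla|u|^p\,dx.
\]
For the second term I unfold the convolution, symmetrize in $(x,y)$, and integrate by parts in both variables; the Euler identity $\nabla I_\lambda(z)\cdot z = -\lambda\,I_\lambda(z)$, together with $\nabla\phi_R(x)=x$ and $\Delta\phi_R(x)=N$ on $\{|x|\leq R\}$, then produces on the bulk $\{|x|,|y|\leq R\}$ the coefficient $\tfrac{2(2N-\lambda)}{p}-2N$. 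The criticality identity $\tfrac{2N-\lambda}{p}=N-2$ collapses this to $-4$, yielding the main nonlocal term $-4\int(I_\lambda*|u|^p)|u|^p\,dx$; all remaining contributions are supported on $\{|x|\geq R\}\cup\{|y|\geq R\}$ and, using $|\nabla\phi_R(x)|\leq C|x|$, $|\Delta\phi_R(x)|\leq C$ together with the symmetry of $I_\lambda$, are $O\bigl(\int_{|x|\geq R}(I_\lambda*|u|^p)|u|^p\,dx\bigr)$.

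The main obstacle is precisely this bookkeeping: the nonlocality couples $x$ and $y$ through $I_\lambda(x-y)$, so every Pohozaev-type manipulation must be carried out symmetrically in $(x,y)$, and the clean coefficient $-4$ on the bulk nonlocal term emerges only after invoking the Euler identity for $I_\lambda$ at the critical exponent $p=(2N-\lambda)/(N-2)$; once the kinetic and nonlocal contributions are assembled, the three pieces of $A_R(u(t))$ appear automatically.
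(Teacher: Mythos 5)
The proposal is essentially correct and follows the standard virial/Pohozaev computation for Hartree, which is the route the paper's cited references take. The first-derivative identity via local mass conservation, the kinetic reduction to $4\int\partial_j\partial_k\phi_R\,\Re(\partial_j\bar u\,\partial_k u)-\int\Delta^2\phi_R|u|^2$ and the radial simplification $\partial_j\partial_k\phi_R\,\Re(\partial_j\bar u\,\partial_k u)=\phi''(|x|/R)|\nabla u|^2$, the algebraic identity $\Re(\bar u\nabla G(u))-\Re(\overline{G(u)}\nabla u)=\nabla[(I_\lambda*|u|^p)|u|^p]-\tfrac2p(I_\lambda*|u|^p)\nabla|u|^p$, and the bulk coefficient $\tfrac{2(2N-\lambda)}{p}-2N=-4$ at the critical exponent all check out.

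The one place that needs tightening is the final error estimate. After unfolding the convolution and symmetrizing, the nonlocal boundary contribution involves the kernel $\nabla I_\lambda(x-y)$, which scales like $|x-y|^{-\lambda-1}$, paired against a difference of cutoff corrections. The stated bound $|\nabla\phi_R(x)|\le C|x|$ is not enough: $|x-y|^{-\lambda-1}|x|$ is not controlled by $I_\lambda(x-y)=c|x-y|^{-\lambda}$ near the diagonal. What actually closes the estimate is the $R$-uniform Lipschitz bound
\[
\big|(\nabla\phi_R(x)-x)-(\nabla\phi_R(y)-y)\big|\le C|x-y|,
\]
which follows from $\sup_x|\nabla^2\phi_R(x)|\lesssim 1$ (a consequence of the construction of $\phi$ and $\phi''\le 1$). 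Combined with the symmetrization you describe, this produces $|\nabla I_\lambda(x-y)|\cdot|x-y|\sim I_\lambda(x-y)$ on the region $\{|x|\ge R\}\cup\{|y|\ge R\}$, which then yields the $O\bigl(\int_{|x|\ge R}(I_\lambda*|u|^p)|u|^p\,dx\bigr)$ control after using the symmetry of $I_\lambda$ to reduce to $\{|x|\ge R\}$. The rest of the error terms (those involving $\Delta\phi_R-N$ and $4\phi''-4$) are supported on $\{|x|\ge R\}$ and bounded directly, exactly as you indicate. So the approach is correct; just replace the quoted bound $|\nabla\phi_R(x)|\le C|x|$ by the Lipschitz estimate on $\nabla\phi_R-x$ and the argument is complete.
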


	\subsection{Basic properties of the linearized operator.} We consider a radial solution $u$ of \eqref{har} close to $W$ and
	write $u$ as
	\begin{equation*}
		\aligned u(t,x)= W(x)+h(t,x),
		\endaligned
	\end{equation*}
	then $h$ satisfies the difference equation
	\begin{align}
		i\partial_t h+\Delta h= V h+  i R(h ), \nonumber
	\end{align}
	where the linear operator $V$ and the remainder $R(h)$ are defined
	by
	\begin{align}
		\label{linearterm} Vh: = &\; -(p-1) (I_{\lambda}*W^p) W^{p-2}h_1- p[I_{\lambda}*(W^{p-1}h_1)] W^{p-1}- i(I_{\lambda}*W^p)W^{p-2} h_2,\\
		R(h):=& i \Big((I_{\lambda}*|W+h|^p) |W+h|^{p-2}(W+h)- (I_{\lambda}*W^p)W^{p-2}W -(p-1) (I_{\lambda}*W^p) W^{p-2}h_1 \notag \\
		&- p[I_{\lambda}*(W^{p-1}h_1)] W^{p-1}- i(I_{\lambda}*W^p)W^{p-2} h_2\Big) .  \label{remainder}
	\end{align}
	
	Let $h_1=\Re h$ and $h_2=\Im h$ (In the following context, we will
	always denote the complex value function $h=h_1+ih_2=(h_1, h_2)$
	without confusion). Then $h$ is a solution of the equation
	\begin{equation}\label{linearequat}
		\aligned
		\partial_t h + \mathcal{L} h= -i R(h), \quad \mathcal{L}:=\begin{pmatrix} 0 &  -L_-\\
			L_+  & 0 \end{pmatrix},
		\endaligned
	\end{equation}
	where the self-adjoint operators $L_{\pm}$ are defined by
	\begin{align}
		L_+ h_1:=& -\Delta h_1 -(p-1)(I_{\lambda}*W^p)W^{p-2} h_1 -  p[I_{\lambda}*(W^{p-1}h_1)] W^{p-1}, \label{Lpos}\\
		L_- h_2:= &-\Delta h_2-(I_{\lambda}*W^p)W^{p-2} h_2.
		\label{Lneg}
	\end{align}
	
	Now we first give some basic estimates about the linearized
	equation \eqref{linearequat}. Firstly, we give nonlinear estimates as follows.
	
	\begin{lemma}\label{linearoperator:prelimestimate}
		Let $V$, $R$ be defined by \eqref{linearterm} and \eqref{remainder},
		respectively, $I$ be a interval with $|I|\leq 1$, and $g,h \in
		l(I)$, $u, v \in L^{\frac{2N}{N-2}}(\R^N)$, then
		\begin{align}
			\big\|\nabla \big(V h\big) \big\|_{N(I)} \lesssim &\; |I|^{\frac{p-1}{p}}
			\big\|h\big\|_{l(I)},\label{linearestimate}
			\\
			\big\|\nabla \big( R(g)-R(h)\big)
			\big\|_{N(I)}\lesssim & \; \big\|g-h\big\|_{l(I)} \Big(
			|I|^{\frac{2p-3}{2p}} \big(\big\|g\big\|_{l(I)} + \big\|h\big\|_{l(I)}\big)
			+ \big\|g\big\|^{2(p-1)}_{l(I)} +
			\big\|h\big\|^{2(p-1)}_{l(I)}\Big),\label{nonlinearestimates}\\
			\big\|  R(u)-R(v) \big\|_{L^{\frac{2N}{N+2}}(\R^N)}\lesssim & \;
			\Big(\big\|u\big\|_{L^{\frac{2N}{N-2}}} +
			\big\|v\big\|_{L^{\frac{2N}{N-2}}}+
			\big\|u\big\|^{2(p-1)}_{L^{\frac{2N}{N-2}}} +
			\big\|v\big\|^{2(p-1)}_{L^{\frac{2N}{N-2}}}\Big)\big\|u-v\big\|_{L^{\frac{2N}{N-2}}}.
			\label{nonlinearestimate:dual}
		\end{align}
	\end{lemma}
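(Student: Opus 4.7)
\emph{Strategy.} All three estimates reduce to Hölder's inequality in space and time combined with the Hardy--Littlewood--Sobolev inequality (Lemma~\ref{L:hardy}) applied to the Riesz convolution $I_\lambda*(\cdot)$. The $l(I)$-norm controls both $h\in L^{2p}(I;L^{2Np/(Np-2p-2)}_x)$ and $\nabla h\in L^{2p}(I;L^{2Np/(Np-2)}_x)$, while powers of the bubble $W$ may be placed in any $L^r_x$ since $W$ is smooth, bounded, and polynomially decaying. The time prefactors arise from Hölder in time against the target $L^{2p/(2p-1)}_t$: one factor of $h$ in $L^{2p}_t$ leaves a stationary $L^{p/(p-1)}_t$ factor giving $|I|^{(p-1)/p}$, while two factors leave $L^{2p/(2p-3)}_t$ giving $|I|^{(2p-3)/(2p)}$.

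\emph{Proof of \eqref{linearestimate}.} Apply Leibniz to $\nabla(Vh)$. Every resulting term places $\nabla$ on $W^{p-2}$, on $W^{p-1}$, on $I_\lambda*W^p$ or $I_\lambda*(W^{p-1}h_1)$, or on $h$ itself. In each case HLS controls the convolution factor in some $L^q_x$, Hölder in space puts $h$ or $\nabla h$ in its allocated Strichartz space, and the remaining $W$-factors sit in fixed $L^r_x$. Hölder in time against the single $L^{2p}_t$ factor then extracts $|I|^{(p-1)/p}$.

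\emph{Proof of \eqref{nonlinearestimates}.} Set $F(w):=(I_\lambda*|w|^p)|w|^{p-2}w$, so $R(h)=i\bigl[F(W+h)-F(W)-dF(W)h\bigr]$. By the fundamental theorem of calculus,
\begin{equation*}
R(g)-R(h) = i\int_0^1 \bigl[dF(W+\tau_s)-dF(W)\bigr](g-h)\,ds,\qquad \tau_s := h+s(g-h).
\end{equation*}
Passing $\nabla$ through the integral and expanding $dF(W+\tau_s)-dF(W)$ pointwise produces a sum of terms of the schematic form $W^{2p-3-k}\tau_s^k(g-h)$ for $k=1,\ldots,2p-2$, decorated by Riesz convolutions of the same total degree $2p-1$. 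The endpoint $k=1$, estimated with two $L^{2p}_t$ factors (one each on $\tau_s$ and $g-h$), contributes $|I|^{(2p-3)/(2p)}(\|g\|_{l(I)}+\|h\|_{l(I)})\|g-h\|_{l(I)}$. The opposite endpoint $k=2p-2$, where all $2p-1$ space-time $L^{2p}$-factors exactly saturate the target time exponent, contributes $(\|g\|^{2(p-1)}_{l(I)}+\|h\|^{2(p-1)}_{l(I)})\|g-h\|_{l(I)}$ with no $|I|$ prefactor. Intermediate values of $k$ have prefactor $|I|^{(2p-2-k)/(2p)}\le 1$ (using $|I|\le 1$) and interpolate between these two endpoints via Young's inequality.

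\emph{Proof of \eqref{nonlinearestimate:dual} and main obstacle.} For \eqref{nonlinearestimate:dual}, a pointwise Taylor expansion of $F$ around $W$ (using $R(0)=0=dR(0)$) yields $|R(u)-R(v)|$ pointwise bounded by sums of $W^{2p-3-k}(|u|+|v|)^k|u-v|$ times Riesz convolutions; after absorbing bounded powers of $W$ into constants, applying HLS to the Riesz factors with exponents tuned so that the result lies in $L^{2N/(N+2)}_x$, and using Hölder in space, one obtains the claim. The principal technical obstacle throughout is the low regularity of $w\mapsto|w|^{p-2}w$ when $2<p<3$: its derivative is only Hölder continuous, so $dF(W+\tau_s)-dF(W)$ cannot be written as an integer-order Taylor remainder. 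This is handled by the standard inequality $\bigl||w_1|^{p-2}w_1-|w_2|^{p-2}w_2\bigr|\lesssim(|w_1|^{p-2}+|w_2|^{p-2})|w_1-w_2|$, valid for $p\ge 2$, which lets the splitting above proceed term by term with correctly balanced HLS exponents.
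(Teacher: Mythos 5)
Your proposal is correct, and the core machinery is the same as the paper's: Hardy--Littlewood--Sobolev for the Riesz convolutions, H\"older in space and time, the fractional-order difference inequalities of Remark~\ref{calculation of nonlinearity term}, and the observation that $W$-factors are stationary in $t$ so that each unused $L^{2p}_t$ slot contributes a power of $|I|^{1/(2p)}$. Where you differ is in organization: for \eqref{nonlinearestimates} and \eqref{nonlinearestimate:dual} the paper expands $R(u)-R(v)$ explicitly and estimates the resulting difference of products term by term (with the detailed computation carried out only for the leading nonlocal term and only when $p>3$), whereas you package the same estimate through the FTC identity $R(g)-R(h)=i\int_0^1[dF(W+\tau_s)-dF(W)](g-h)\,ds$ followed by a schematic degree count and a Young-interpolation argument between the two endpoint terms. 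This is a cleaner way to see why exactly a $|I|^{(2p-3)/(2p)}$-weighted linear piece and an unweighted $\|\cdot\|^{2(p-1)}$ piece appear: the $k$-th term carries prefactor $|I|^{(2p-2-k)/(2p)}$, and for $\theta=\tfrac{2p-2-k}{2p-3}\in[0,1]$ one has the exact identity $|I|^{(2p-2-k)/(2p)}a^{k}=(|I|^{(2p-3)/(2p)}a)^{\theta}(a^{2p-2})^{1-\theta}$, so Young's inequality absorbs the intermediate terms; note that you must keep the $|I|$-factor for this interpolation rather than discard it via $|I|\le1$, since $a^k$ alone does not dominate the two endpoints. Two small points: (i) your schematic term should read $W^{2p-2-k}\tau_s^{k}(g-h)$, not $W^{2p-3-k}\tau_s^{k}(g-h)$, if the total (pre-Riesz) degree is to be $2p-1$; (ii) for non-integer $p$ the integer-power expansion is not literal, and the picture has to be read through the Lipschitz/H\"older difference estimates rather than a genuine polynomial Taylor expansion -- you correctly flag this and name the right inequality $\bigl||w_1|^{p-2}w_1-|w_2|^{p-2}w_2\bigr|\lesssim(|w_1|^{p-2}+|w_2|^{p-2})|w_1-w_2|$, which is precisely how the paper (and you) keep $|w|^{p-2}w$ intact rather than splitting off a non-Lipschitz factor $|w|^{p-2}$.
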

	
	\begin{remark}\label{calculation of nonlinearity term}
		Let $f(z)=|z|^{p-2}z$. The complex derivative of $f$ is that
		$$f_z(z)=\frac{p}{2}|z|^{p-2}, \quad f_{\bar{z}}(z)=\frac{p-2}{2}|z|^{p-4}z^2.$$
		For $z_1,z_2\in\mathbb{C}$, we have
		$$f(z_1)-f(z_2)=\int_{0}^{1}[f_{z_1}(z_2+\theta(z_1-z_2))(z_1-z_2)+f_{\bar{z_1}}(z_2+\theta(z_1-z_2))\overline{(z_1-z_2)}]d\theta.$$
		Therefore,
		$$f(z_1)-f(z_2)\lesssim(|z_1|^{p-2}+|z_2|^{p-2})|z_1-z_2|.$$
		Also, by \cite{Cazenave:fractional order}, we get
		\begin{equation*}
			\big||z_1|^{\alpha}-|z_2|^{\alpha}\big|\lesssim\left\{\begin{aligned}
				&(|z_1|^{{\alpha}-1}+|z_2|^{{\alpha}-1})|z_1-z_2|&\alpha\geq1;\\
				&|z_1-z_2|^{\alpha}&0<\alpha\leq1.
			\end{aligned}\right.
		\end{equation*}
	\end{remark}
	
	\begin{proof}[Proof of Lemma \ref{linearoperator:prelimestimate}] We follow the argument in Lemma $2.10$ in \cite{MiaoWX:dynamic gHartree}. We firstly prove \eqref{linearestimate}. Recall that
		$$ Vh: = \; -(p-1) (I_{\lambda}*W^p) W^{p-2}h_1- p[I_{\lambda}*(W^{p-1}h_1)] W^{p-1}- i(I_{\lambda}*W^p)W^{p-2} h_2.$$
		Hence, we get
		\begin{align}\label{Vh}
			\big|\nabla \big(V h\big) \big| \lesssim &\;|(I_{\lambda}*W^p)W^{p-3}\nabla Wh|+|(I_{\lambda}*W^p)W^{p-2} \nabla h|+|(I_{\lambda}*(W^{p-1}\nabla W))W^{p-2}h|\nonumber\\
			&\;+|(I_{\lambda}*(W^{p-1}h_1))W^{p-2}\nabla W|+|I_{\lambda}*(W^{p-2}\nabla W h_1)W^{p-1}|\nonumber\\
			&\;+|(I_{\lambda}*(W^{p-1}\nabla h_1)W^{p-1}|.
		\end{align}
		As for the first term on the right of \eqref{Vh}, by Lemma \ref{L:hardy} , Remark \ref{calculation of nonlinearity term} and
		H\"{o}lder's inequality, we have
		\begin{align*}
			\big\|(I_{\lambda}*W^p)W^{p-3}\nabla Wh\big\|_{N(I)}\lesssim& \big\|\nabla W \big\|_{S(I)}\big\|W\big\|^{p-3}_{Z(I)}\big\|h\big\|_{Z(I)}\big\|I_{\lambda}*W^p\big\|_{L_t^2L_x^{\frac{2N}{2N+2p-Np-2}}}\\
			\lesssim& \big\|\nabla W \big\|_{S(I)}\big\|W\big\|^{p-3}_{Z(I)}\big\|h\big\|_{Z(I)}\big\|W^p\big\|_{L_t^2L_x^{\frac{2N}{Np-2p-2}}}\\
			=&\big\|W\big\|^{2p-3}_{Z(I)}\big\|\nabla W \big\|_{S(I)}
			\big\|h\big\|_{Z(I)}.
		\end{align*}
		As for the rest term on the right of \eqref{Vh}, by simple calculations, we have
		\begin{align*}
			\big\|\nabla \big( V h \big) \big\|_{N(I)}
			\lesssim \big\|W\big\|^{2(p-1)}_{Z(I)} \big\|\nabla h\big\|_{S(I)} +
			\big\|W\big\|^{2p-3}_{Z(I)}\big\|\nabla W \big\|_{S(I)}
			\big\|h\big\|_{Z(I)}.
		\end{align*}
		Since $W \in L^{\frac{2Np}{Np-2p-2}}_x \cap \dot W^{1,
			\frac{2Np}{Np-2}}_x,$ we have that
		\begin{equation*}
			\aligned \big\|W\big\|_{Z(I)} \lesssim |I|^{\frac{1}{2p}}, \;\;
			\big\|\nabla W \big\|_{S(I)}\lesssim |I|^{\frac{1}{2p}},
			\endaligned
		\end{equation*}
		which implies \eqref{linearestimate}. 
		
		As for  \eqref{nonlinearestimates} and \eqref{nonlinearestimate:dual}, recall that
		\begin{align*}
			R(h):=&\; i \Big((I_{\lambda}*|W+h|^p) |W+h|^{p-2}(W+h)- (I_{\lambda}*W^p)W^{p-1} -(p-1) (I_{\lambda}*W^p) W^{p-2}h_1\\
			&- p[I_{\lambda}*(W^{p-1}h_1)] W^{p-1}- i(I_{\lambda}*W^p)W^{p-2} h_2\Big) ,
		\end{align*}
		so, we have
		\begin{align*}
			R(u)-R(v)&=\; i \Big((I_{\lambda}*|W+u|^p) |W+u|^{p-2}(W+u)- (I_{\lambda}*|W+v|^p)|W+v|^{p-2} (W+v)\nonumber\\
			&\qquad-(p-1)(I_{\lambda}*W^p)W^{p-2}(u_1-v_1) - p\left(I_{\lambda}*(W^{p-1}(u_1-v_1))\right) W^{p-1}  \\
			& \qquad-i(I_{\lambda}*W^p)W^{p-2}(u_2-v_2)  \Big). 
		\end{align*}
		As for the term $(I_{\lambda}*|u|^p) |u|^{p-2}u- (I_{\lambda}*|v|^p)|v|^{p-2}v$, by Lemma \ref{L:hardy} , Remark \ref{calculation of nonlinearity term} and H\"{o}lder's inequality, we estimate when $p>3$ that
		
		\begin{align*}
			&\big\|(I_{\lambda}*|u|^p)|u|^{p-2} u- (I_{\lambda}*|v|^p)|v|^{p-2}v\big\|_{L^{\frac{2N}{N+2}}}\\
			=&\big\|(I_{\lambda}*|u|^p)|u|^{p-2}(u-v)\big\|_{L^{\frac{2N}{N+2}}}+\big\|(I_{\lambda}*(|u|^p-|v|^p))|u|^{p-2}v\big\|_{L^{\frac{2N}{N+2}}}\\
			&+\big\|(I_{\lambda}*|v|^p)(|u|^{p-2}-|v|^{p-2})v\big\|_{L^{\frac{2N}{N+2}}}\\
			\lesssim& \big\|u\big\|^{p-2}_{L^{\frac{2N}{N-2}}}\big\|u-v\big\|_{L^{\frac{2N}{N-2}}}\big\|I_{\lambda}*|u|^p\big\|_{L^{\frac{2N}{2N+2p-Np}}}+\big\|u\big\|^{p-2}_{L^{\frac{2N}{N-2}}}\big\|v\big\|_{L^{\frac{2N}{N-2}}}\big\|I_{\lambda}*(|u|^{p}-|v|^{p})\big\|_{L^{\frac{2N}{2N+2p-Np}}}\\
			&+\big\||u|^{p-2}-|v|^{p-2}\big\|_{L^{\frac{2N}{(N-2)(p-2)}}}\big\|v\big\|_{L^{\frac{2N}{N-2}}}\big\|I_{\lambda}*|v|^p\big\|_{L^{\frac{2N}{2N+2p-Np}}}\\
			\lesssim& \big\|u\big\|^{p-2}_{L^{\frac{2N}{N-2}}}\big\|u-v\big\|_{L^{\frac{2N}{N-2}}}\big\|u^p\big\|_{L^{\frac{2N}{(N-2)p}}}+\big\|u\big\|^{p-2}_{L^{\frac{2N}{N-2}}}\big\|v\big\|_{L^{\frac{2N}{N-2}}}\big\||u|^{p}-|v|^{p}\big\|_{L^{\frac{2N}{(N-2)p}}}\\
			&+\big\||u|^{p-2}-|v|^{p-2}\big\|_{L^{\frac{2N}{(N-2)(p-2)}}}\big\|v\big\|_{L^{\frac{2N}{N-2}}}\big\||v|^p\big\|_{L^{\frac{2N}{(N-2)p}}}\\
			\lesssim& \big\|u\big\|^{2p-2}_{L^{\frac{2N}{N-2}}}\big\|u-v\big\|_{L^{\frac{2N}{N-2}}}+\big\|u\big\|^{p-2}_{L^{\frac{2N}{N-2}}}\big\|v\big\|_{L^{\frac{2N}{N-2}}}\big\|(|u|^{p-1}+|v|^{p-1})|u-v|\big\|_{L^{\frac{2N}{(N-2)p}}}\\
			&+\big\|(|u|^{p-3}-|v|^{p-3})|u-v|\big\|_{L^{\frac{2N}{(N-2)(p-2)}}}\big\|v\big\|_{L^{\frac{2N}{N-2}}}\big\||v|^p\big\|_{L^{\frac{2N}{(N-2)p}}}\\
			\lesssim& \big\|u\big\|^{2p-2}_{L^{\frac{2N}{N-2}}}\big\|u-v\big\|_{L^{\frac{2N}{N-2}}}+\big\|u\big\|^{2p-3}_{L^{\frac{2N}{N-2}}}\big\|v\big\|_{L^{\frac{2N}{N-2}}}\big\|u-v\big\|_{L^{\frac{2N}{N-2}}}+\big\|u\big\|^{p-2}_{L^{\frac{2N}{N-2}}}\big\|u-v\big\|_{L^{\frac{2N}{N-2}}}\big\|v\big\|^{p}_{L^{\frac{2N}{N-2}}}\\
			&+\big\|u-v\big\|_{L^{\frac{2N}{N-2}}}\big(\big\|u\big\|^{p-3}_{L^{\frac{2N}{N-2}}} \big\|v\big\|^{p+1}_{L^{\frac{2N}{N-2}}}+\big\|v\big\|^{2p-2}_{L^{\frac{2N}{N-2}}}\big)\\
			\lesssim& 
			\big\|u-v\big\|_{L^{\frac{2N}{N-2}}}\big(\big\|u\big\|_{L^{\frac{2N}{N-2}}} +
			\big\|v\big\|_{L^{\frac{2N}{N-2}}}+\big\|u\big\|^{2(p-1)}_{L^{\frac{2N}{N-2}}}+\big\|v\big\|^{2(p-1)}_{L^{\frac{2N}{N-2}}}\big).
		\end{align*}
		When $2\leq p\leq 3$ and the other terms can also be estimated similarly. Above all, we can obtain \eqref{nonlinearestimate:dual}.
		
		Last, 
		we can show \eqref{nonlinearestimates} from the expression of $R(u)-R(v)$ above, and completes the proof. 
	\end{proof}

	Because of the linear operator $V$ in the
	linearized equation \eqref{linearequat}, we will use the
	following integral summation argument.
	
	\begin{lemma}[\cite{DuyMerle:NLS:ThresholdSolution}]
		\label{summation} Let $t_0>0$, $p\in [1,+\infty[$, $a_0 \neq 0$, $E$ be
		a normed vector space, and $f\in L^p_{\rm loc}(t_0,+\infty;E)$ such
		that
		\begin{equation}
			\label{small.tau} \exists\; \tau_0>0,\; \exists\; C_0>0, \;\forall\;
			t\geq t_0, \quad \|f\|_{L^p(t,t+\tau_0,E)}\leq C_0e^{a_0 t}.
		\end{equation}
		Then for $t\geq t_0$,
		\begin{align}
			\|f\|_{L^p(t,+\infty,E)}\leq &\;
			\frac{C_0e^{a_0 t}}{1-e^{a_0\tau_0}},\;  \text{if
			}\; a_0<0; \label{conclu.summation} \\
			\|f\|_{L^p(t_0, t,E)} \leq
			&\; \frac{C_0e^{a_0 t}}{1-e^{-a_0\tau_0}},\;  \text{if }\; a_0>0.
			\label{conclu.summationless}
		\end{align}
	\end{lemma}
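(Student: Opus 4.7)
The statement is a pure covering/summation lemma, so the plan is to partition the interval of interest into consecutive pieces of length $\tau_0$ (with one short leftover piece in the second case), apply the hypothesis \eqref{small.tau} to each piece so that its $L^p$-norm is controlled by $C_0$ times an exponential evaluated at the left endpoint, and then sum the resulting geometric series with ratio $e^{a_0\tau_0}$. The one ingredient beyond routine bookkeeping is that for $p\in[1,\infty)$ the vector-valued norm obeys the Minkowski/triangle inequality
$$
\|f\|_{L^p(I;E)} \le \sum_{k} \|f\|_{L^p(I_k;E)}
$$
over any countable disjoint decomposition $I=\bigsqcup_k I_k$. This is what produces the clean denominator $1-e^{\pm a_0\tau_0}$ rather than a $p$-th root.

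\textbf{Case $a_0<0$.} For $t\ge t_0$ I partition
$$
(t,+\infty) \;=\; \bigsqcup_{k=0}^{\infty} \bigl[t+k\tau_0,\; t+(k+1)\tau_0\bigr).
$$
Minkowski's inequality gives
$$
\|f\|_{L^p(t,+\infty;E)} \;\le\; \sum_{k=0}^{\infty} \|f\|_{L^p(t+k\tau_0,\,t+(k+1)\tau_0;\,E)}.
$$
Applying the hypothesis at $s=t+k\tau_0 \ge t_0$, each term is bounded by $C_0 e^{a_0(t+k\tau_0)}$, and the geometric series sums (since $a_0<0$) to $C_0 e^{a_0 t}/(1-e^{a_0\tau_0})$, which is \eqref{conclu.summation}.

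\textbf{Case $a_0>0$.} Let $M=\lfloor (t-t_0)/\tau_0\rfloor$ and decompose
$$
(t_0,t) \;=\; \Bigl(\bigsqcup_{k=0}^{M-1} [t_0+k\tau_0,\,t_0+(k+1)\tau_0)\Bigr) \;\sqcup\; [t_0+M\tau_0,\,t).
$$
Every piece is contained in a window $[s,s+\tau_0)$ with $s=t_0+k\tau_0\ge t_0$, so the hypothesis gives $\|f\|_{L^p(\text{piece};E)}\le C_0 e^{a_0(t_0+k\tau_0)}$. Summing the finite geometric series yields
$$
\|f\|_{L^p(t_0,t;E)} \;\le\; \sum_{k=0}^{M} C_0 e^{a_0(t_0+k\tau_0)} \;\le\; \frac{C_0\, e^{a_0(t_0+(M+1)\tau_0)}}{e^{a_0\tau_0}-1},
$$
and the inequality $t_0+(M+1)\tau_0\le t+\tau_0$ replaces the numerator by $e^{a_0(t+\tau_0)}$, after which division by $e^{a_0\tau_0}-1$ rewrites the bound as $C_0 e^{a_0 t}/(1-e^{-a_0\tau_0})$, which is \eqref{conclu.summationless}.

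\textbf{Main obstacle.} There is essentially none: the lemma is elementary. The only points demanding a moment of care are invoking Minkowski (rather than the stronger additivity $\|\cdot\|_p^p=\sum\|\cdot\|_p^p$, which would yield the less convenient denominator $(1-e^{pa_0\tau_0})^{1/p}$), and in the second case absorbing the leftover boundary piece $[t_0+M\tau_0,t)$ into the closest full window $[t_0+M\tau_0,t_0+(M+1)\tau_0)$ so that the finite sum closes into the stated form.
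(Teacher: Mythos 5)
Your proof is correct, and it is the standard argument: the paper does not prove this lemma but simply cites it from Duyckaerts--Merle, and the original proof there is exactly the same tiling-by-$\tau_0$ plus geometric-series argument you give, with the Minkowski observation $\|f\|_{L^p(\sqcup_k I_k;E)} \le \sum_k \|f\|_{L^p(I_k;E)}$ being the one step that yields the clean denominator $1-e^{\pm a_0\tau_0}$. Your handling of the leftover piece in the $a_0>0$ case (absorbing it into a full window to the right, giving $t_0+(M+1)\tau_0\le t+\tau_0$) is also exactly how the constant $1/(1-e^{-a_0\tau_0})$ arises.
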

	
	By the Strichartz estimate, Lemma
	\ref{linearoperator:prelimestimate} and Lemma \ref{summation}, we
	have
	\begin{lemma}\label{linearstrich}
		Let $C$, $c>0$, and $v$ be a solution of \eqref{linearequat} with
		\begin{equation*}
			\big\| v(t) \big\|_{\dot H^1} \leq C e^{-c t},
		\end{equation*}
		then for any $(q,r)$ with $\frac2q=N
		(\frac12 -\frac1r ) $, $q\in [2, +\infty]$, we have for large $t$
		\begin{align*}
			\big\|v\big\|_{l(t,+\infty)}+\big\|\nabla v\big\|_{L^q(t,+\infty;
				L^r)} \leq C e^{-ct}.
		\end{align*}
	\end{lemma}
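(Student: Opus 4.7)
The plan is to run a Strichartz--Duhamel argument on short time intervals of fixed length $\tau_0$, close a bootstrap to obtain a local $l(I)$ bound controlled by $\|v(t)\|_{\dot H^1}$, and then sum the local exponentially decaying estimates by means of Lemma \ref{summation}.

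Fix $\tau_0>0$ small (to be chosen) and pick $t$ sufficiently large that $\|v(t)\|_{\dot H^1}\leq Ce^{-ct}$ is below the small-data threshold $\eta_0$ in Theorem \ref{T:local}. On $I:=[t,t+\tau_0]$, Duhamel's formula for \eqref{linearequat} reads
$$v(s)=e^{i(s-t)\Delta}v(t)-i\int_{t}^{s}e^{i(s-\sigma)\Delta}\bigl[Vv(\sigma)+iR(v(\sigma))\bigr]\,d\sigma.$$
Applying the Strichartz estimate (Lemma \ref{Striestimate}) together with the bounds \eqref{linearestimate} and \eqref{nonlinearestimates} from Lemma \ref{linearoperator:prelimestimate}, for any admissible pair $(q,r)$ we obtain
$$\|v\|_{l(I)}+\|\nabla v\|_{L^{q}(I;L^{r})}\lesssim \|v(t)\|_{\dot H^1}+\tau_0^{\frac{p-1}{p}}\|v\|_{l(I)}+\|v\|_{l(I)}\Bigl(\tau_0^{\frac{2p-3}{2p}}\|v\|_{l(I)}+\|v\|_{l(I)}^{2(p-1)}\Bigr).$$

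Choose $\tau_0$ so small that the linear $V$-term is absorbed into the left-hand side. For $t$ large, the smallness of $\|v(t)\|_{\dot H^1}$ combined with local wellposedness forces $\|v\|_{l(I)}$ to be small on $I$ via a standard continuity/bootstrap argument; this permits absorption of the nonlinear remainder as well, yielding the local estimate
$$\|v\|_{l(I)}+\|\nabla v\|_{L^{q}(I;L^{r})}\leq C_1\|v(t)\|_{\dot H^1}\leq C_1 C\,e^{-ct}.$$

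Finally, since the norms $\|\cdot\|_{l(I)}$ and $\|\nabla\cdot\|_{L^{q}(I;L^{r})}$ are $L^{2p}_t$ (respectively $L^{q}_{t}$) spaces, applying Lemma \ref{summation} with decay rate $a_0=-c<0$ and interval length $\tau_0$, summed over $[t+k\tau_0,t+(k+1)\tau_0]$ for $k\geq 0$, upgrades the local bounds to the claimed global exponential decay on $(t,+\infty)$. The main technical obstacle is the bootstrap step: one must verify that $\|v\|_{l(I)}$ remains small uniformly on every interval of length $\tau_0$ sitting in the exponentially decaying regime. Once this is secured by continuity in time and the smallness of $\|v(t)\|_{\dot H^1}$ for large $t$, the rest of the argument is a routine assembly of Strichartz estimates and the summation lemma.
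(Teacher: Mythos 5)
Your proof is correct and follows essentially the same route as the paper: apply the Strichartz estimate together with the nonlinear bounds of Lemma \ref{linearoperator:prelimestimate} on short intervals $[t,t+\tau_0]$, close the local estimate by a continuity/bootstrap argument using smallness of $\tau_0$ and of $\|v(t)\|_{\dot H^1}$ for large $t$, and then invoke Lemma \ref{summation} with $a_0=-c<0$ to sum the exponentially decaying local bounds over $(t,+\infty)$.
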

	
	\begin{proof}We follow the argument in Lemma $2.12$ in \cite{MiaoWX:dynamic gHartree}. For small $\tau_0$, by the Strichartz estimate and
		Lemma \ref{linearoperator:prelimestimate}, we have on $I=[t,
		t+\tau_0]$
		\begin{align*}
			\big\|v\big\|_{l(I)}+\big\|\nabla v\big\|_{L^q(I; L^r)} \leq & C
			e^{-c t } + C\big\|\nabla (V v) \big\|_{N(I)} +  C\big\|\nabla R(
			v) \big\|_{N(I)} \\
			\leq & C e^{-c t } + C |I|^{\frac{p-1}{p}} \big\|v\big\|_{l(I)} +
			\big\|v\big\|^2_{l(I)} + \big\|v\big\|^{2p-1}_{l(I)}.
		\end{align*}
		By choosing sufficiently small $\tau_0$, the continuous argument
		gives that
		\begin{equation*}
			\big\|v\big\|_{l(I)}+\big\|\nabla v\big\|_{L^q(I; L^r)} \leq C
			e^{-c t }.
		\end{equation*}
		Thus this implies the desired result by Lemma \ref{summation}.
	\end{proof}
	
	\subsection{Spectral properties of the linearized operator.}
	Because of the phase rotation invariance and the scaling invariance
	of \eqref{har}, we know that $iW \in \dot{H}^1(\R^N)$ and
	$\widetilde{W}=\dfrac{N-2}{2}W+x\cdot \nabla W \in \dot{H}^1(\R^N)$ are
	elements of the null-space of $\mathcal{L}$ in $\dot{H}^1_{rad}$. In fact,
	they are the only elements of the null-space of $\mathcal{L}$ in
	$\dot{H}^1_{rad}$. From Proposition \ref{prop:nondegeneracy}, we have
	\begin{lemma}\label{keyassumption}Let $\mathcal{L}$ be defined by
		\eqref{linearequat}. Then
		\begin{align*}
			\big\{u\in   \dot{H}^1_{rad}(\R^N),
			\mathcal{L}u=0\big\}=\text{span}\big\{iW, \widetilde{W}
			\big\}.
		\end{align*}Namely,
		\begin{equation*}
			\big\{u\in   \dot{H}^1_{rad}(\R^N), L_-u=0\big\}=\text{span}\big\{W
			\big\};\quad \big\{u\in   \dot{H}^1_{rad}(\R^N),  L_+u=0\big\}=\text{span}\big\{\widetilde{W}
			\big\}.
		\end{equation*}
	\end{lemma}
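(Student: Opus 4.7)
The plan is to exploit the block form $\mathcal{L}=\begin{pmatrix}0 & -L_-\\ L_+ & 0\end{pmatrix}$, which shows that for $h=h_1+ih_2$ (real/imaginary parts) the equation $\mathcal{L}h=0$ is equivalent to the uncoupled pair $L_+h_1=0,\ L_-h_2=0$. It therefore suffices to identify the radial kernels of $L_+$ and $L_-$ separately; the desired conclusion $\ker\mathcal{L}\cap\dot H^1_{rad}=\text{span}\{iW,\widetilde{W}\}$ then follows at once from $\ker L_+\cap\dot H^1_{rad}=\text{span}\{\widetilde{W}\}$ and $\ker L_-\cap\dot H^1_{rad}=\text{span}\{W\}$.

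For $L_+$, I would simply quote Proposition~\ref{prop:nondegeneracy} with $u=W$: the equation $L_+\varphi=0$ is exactly the linearized equation \eqref{elliptic equation}, so every $\dot H^1$ solution is a linear combination of the translation modes $\varphi_j=\partial_{x_j}W$, $j=1,\dots,N$, and the dilation mode $\varphi_{N+1}=\widetilde{W}$. Each $\partial_{x_j}W$ is odd in $x_j$, hence not radial, so intersecting with $\dot H^1_{rad}$ leaves only $\text{span}\{\widetilde{W}\}$.

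For $L_-$, I first observe that the ground-state equation \eqref{gs} rewrites as $-\Delta W=(I_\lambda\ast W^p)W^{p-2}\cdot W$, i.e.\ $L_-W=0$, so $W$ is in the kernel. For uniqueness I would use the positivity $W>0$ to make the substitution $\varphi=W\psi$, which is valid for any radial $\dot H^1$ solution $\varphi$ (smooth away from the origin by elliptic regularity). A direct calculation, using $-\Delta W=(I_\lambda\ast W^p)W^{p-1}$, cancels the potential term and yields
\[
L_-\varphi=-\frac{1}{W}\,\nabla\cdot\bigl(W^2\nabla\psi\bigr),
\]
so $L_-\varphi=0$ is equivalent to $\nabla\cdot(W^2\nabla\psi)=0$. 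For radial $\psi$ this integrates to $r^{N-1}W(r)^2\psi'(r)=c$; since $W(0)>0$, one has $\psi'(r)\sim c\,r^{1-N}$ near $r=0$, so if $c\neq 0$ then $\varphi=W\psi\sim r^{2-N}$ near the origin and $\int_{|x|<1}|\nabla\varphi|^2\,dx=\infty$, contradicting $\varphi\in\dot H^1$. Hence $c=0$, $\psi$ is constant, and $\varphi\in\text{span}\{W\}$.

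The main obstacle is the $L_-$ half, since Proposition~\ref{prop:nondegeneracy} handles only $L_+$. The two essential ingredients there are the strict positivity of $W$, which makes the Liouville-type substitution $\varphi=W\psi$ globally well-defined, and the value $W(0)>0$, which produces a singularity strong enough at the origin to rule out the second radial ODE solution in the energy class.
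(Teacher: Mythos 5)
Your proof is correct. For the $L_+$ half you take exactly the route the paper intends: invoke Proposition~\ref{prop:nondegeneracy} with $u=W$, then discard the translation modes $\partial_{x_j}W$ because they are degree-one spherical harmonics (odd in $x_j$) and so cannot appear in a radial kernel element, leaving $\text{span}\{\widetilde W\}$. For the $L_-$ half you have correctly noticed something the paper glosses over: the lemma is asserted to follow "from Proposition~\ref{prop:nondegeneracy}", but that proposition only treats the $L_+$ linearization, so the identification $\ker L_-\cap\dot H^1_{rad}=\text{span}\{W\}$ needs its own argument. Your ground-state (Jacobi/Liouville) substitution $\varphi=W\psi$, which turns $L_-\varphi=0$ into $\nabla\cdot(W^2\nabla\psi)=0$ and hence into the radial ODE $r^{N-1}W^2\psi'=c$, is the right tool: it works for all $N\geq 3$ and does not require $W\in L^2$, which is essential here since $W\notin L^2$ when $N=3,4$. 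Your check that $c\neq 0$ forces $\varphi\sim r^{2-N}$ near the origin and so $\int_{|x|<1}|\nabla\varphi|^2=\infty$ is correct. A small streamlining is available: since $V=(I_\lambda*W^p)W^{p-2}$ is bounded and smooth (as the paper shows in Appendix~\ref{appen regularity}), elliptic regularity makes $\varphi$ smooth \emph{including} at the origin, so $\psi=\varphi/W$ is smooth at $0$ and $r^{N-1}W^2\psi'\to 0$ directly, giving $c=0$ without the divergent-integral estimate; but your version is equally valid.
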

	
	Now we define the linearized energy $\Phi$ by
	\begin{align}\label{linearized energy}
		\Phi(h):=& \frac12 \int_{\R^N} (L_+ h_1) h_1 \; dx + \frac12
		\int_{\R^N}
		(L_- h_2) h_2 \; dx \\
		= & \frac12 \int_{\R^N} \big| \nabla h_1 \big| ^2 - \frac{p-1}{2}
		\int_{\R^N} (I_{\lambda}*W^p)W^{p-2} h_1^2\; dx-
		\frac{p}{2}
		\int_{\R^N} (I_{\lambda}*W^{p-1}h_1)W^{p-1} h_1\; dx \nonumber\\
		& + \frac12 \int_{\R^N} \big| \nabla h_2 \big| ^2 - \frac12
		\int_{\R^N} (I_{\lambda}*W^p)W^{p-2} h_2^2\; dx,
		\nonumber
	\end{align}
	and  denote by $B(g,h)$ the bilinear symmetric form associated to
	$\Phi$,
	\begin{equation}\label{bilinear form}
		\aligned B(g,h):=\frac12 \int_{\R^N} (L_+ g_1) h_1 \; dx +
		\frac12\int_{\R^N} (L_- g_2) h_2 \; dx, \;\; \forall \; g,h \in \dot
		H^1(\R^N).
		\endaligned
	\end{equation}
	
	Let us specify the important coercivity properties of $\Phi$.
	Consider the three orthogonal directions $W, iW,$ and
	$\widetilde{W}$ in the Hilbert space $\dot H^1(\R^N, \C)$. Let
	$H:=\text{span} \{W, iW, \widetilde{W}\},$ and $H^{\bot}$ be its
	orthogonal subspace in $\dot H^1_{\text{rad}} (\R^N)$, that is,
	\begin{equation*}\label{H orth}
		\aligned H^{\bot} := \left\{v\in \dot H^1_{\text{rad}} (\R^N), (iW, v)_{\dot
			H^1}=(\widetilde{W}, v)_{\dot H^1}  = (W, v)_{\dot H^1}=0 \right\}.
		\endaligned
	\end{equation*}
	
	Following the arguments in Proposition $2.13$ and Proposition $2.14$ in \cite{MiaoWX:dynamic gHartree}, we have the following coercivity properties of the linearized energy $\Phi$ on $H^{\bot}$ and $G_{\bot}$.
	
	\begin{proposition}\label{coerH}There exists a constant $c>0$ such
		that for all radial function $ h \in H^{\bot}$, we have
		\begin{equation}
			\aligned  \Phi(h)\geq c\big\| h \big\|^2_{\dot H^1}.
			\endaligned
		\end{equation}
	\end{proposition}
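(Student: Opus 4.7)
The plan is to decouple $\Phi$ into its $L_+$ and $L_-$ parts and handle each by combining the nondegeneracy result of Proposition \ref{prop:nondegeneracy} (as packaged in Lemma \ref{keyassumption}) with a standard compactness/min-max argument, following Proposition 2.13 of \cite{MiaoWX:dynamic gHartree}. Writing $h = h_1 + i h_2$, the condition $h \in H^\perp$ translates into $(h_1, W)_{\dot H^1} = (h_1, \widetilde W)_{\dot H^1} = 0$ and $(h_2, W)_{\dot H^1} = 0$, so the decoupled identity $\Phi(h)=\tfrac12 \langle L_+ h_1, h_1\rangle + \tfrac12\langle L_- h_2, h_2\rangle$ reduces the claim to separate coercivity estimates for $L_+$ and $L_-$ on the two indicated subspaces of $\dot H^1_{\mathrm{rad}}$.

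For $L_-$, the operator is the local Schr\"odinger-type operator $-\Delta - (I_\lambda \ast W^p) W^{p-2}$; since $L_- W = 0$ by the ground state equation and $W>0$, Perron--Frobenius implies $L_- \geq 0$ on $\dot H^1_{\mathrm{rad}}$ with kernel exactly $\mathrm{span}\{W\}$ by Lemma \ref{keyassumption}. A contradiction argument (taking a normalized minimizing sequence, extracting a weak limit, and invoking the compact embedding of radial $\dot H^1$ into weighted $L^2$ spaces associated to the fast-decaying potential) then upgrades this to $\langle L_- h_2, h_2\rangle \gtrsim \|h_2\|_{\dot H^1}^2$ on the $\dot H^1$-orthogonal complement of $W$. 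For $L_+$, the ground state equation gives
\begin{equation*}
\langle L_+ W, W\rangle = -2(p-1)\|\nabla W\|_{L^2}^2 < 0,
\end{equation*}
so $L_+$ has at least one negative direction, while Lemma \ref{keyassumption} forces its radial kernel to be $\mathrm{span}\{\widetilde W\}$. A min-max comparison using Proposition \ref{SharpConstant} rules out any further negative eigenvalues, yielding a simple lowest eigenvalue $\lambda_0 < 0$ with eigenfunction $\chi \in \dot H^1_{\mathrm{rad}}$ and $\langle W, \chi\rangle_{L^2} \neq 0$. The two orthogonality conditions $(h_1,W)_{\dot H^1} = (h_1,\widetilde W)_{\dot H^1}=0$ then eliminate, by an elementary linear-algebra step in $\mathrm{span}\{W,\widetilde W,\chi\}$, both the negative and the zero directions, producing the desired $L^2$-type coercivity for $L_+$.

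Finally, I would promote the $L^2$-type coercivity to $\dot H^1$-coercivity via the $(1-\delta)$-splitting trick: writing $\Phi = \delta\Phi + (1-\delta)\Phi$ and using Lemma \ref{L:hardy} together with H\"older's inequality to bound each potential term by a fixed multiple of $\|\nabla h\|_{L^2}^2$, the kinetic contribution from the $\delta$-piece absorbs the deficit for $\delta$ small. The main obstacle is the spectral analysis of $L_+$: unlike the purely local Schr\"odinger setting, the nonlocal term $-p\,[I_\lambda \ast (W^{p-1} \cdot)]W^{p-1}$ prevents a Sturm-oscillation count of negative eigenvalues, so one has to argue variationally -- a hypothetical second negative eigenvalue would yield a radial test function outperforming $W$ in the sharp HLS inequality of Proposition \ref{SharpConstant}, contradicting the variational characterization of the ground state. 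The nondegeneracy of Proposition \ref{prop:nondegeneracy} is essential throughout, pinning down both the radial kernel and the simplicity of the lowest eigenvalue of $L_+$.
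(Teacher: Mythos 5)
Your overall strategy matches the paper's: decouple $\Phi$ into $L_+$ and $L_-$ blocks, use the nondegeneracy (Lemma \ref{keyassumption}) to pin down kernels, use the sharp inequality of Proposition \ref{SharpConstant} to control the negative spectrum of $L_+$, and upgrade to $\dot H^1$ coercivity. However, there is a genuine gap in the final step of your $L_+$ argument.

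The crucial defect is your passage from ``both the negative and zero directions are eliminated'' to ``$L^2$-type coercivity,'' followed by the $(1-\delta)$-splitting trick. Since $L_+$ is a relatively compact perturbation of $-\Delta$, its essential spectrum is $[0,\infty)$: the spectrum touches zero from above, so projecting off the unique negative eigenfunction and the kernel $\widetilde W$ gives you only \emph{nonnegativity} $\langle L_+ h_1,h_1\rangle\ge 0$, not a quantitative bound $\gtrsim \|h_1\|_{L^2}^2$ or $\gtrsim \int (I_\lambda*W^p)W^{p-2}h_1^2$. And even granting such an intermediate weighted-$L^2$ bound, the absorption step fails: when you bound the nonlocal potential term by $C\|\nabla h\|_{L^2}^2$ via Lemma \ref{L:hardy}, H\"older and Sobolev, the constant $C$ is determined by $W$ (which is normalized by the equation) and is not small; in fact $C\ge 1/2$, so the $\delta$-piece $\tfrac{\delta}{2}\|\nabla h\|^2-\delta C\|\nabla h\|^2$ has the wrong sign and cannot absorb anything. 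The paper avoids both issues by running a single compactness/contradiction argument directly in $\dot H^1$: it first proves nonnegativity on $\{W\}^\perp_{\dot H^1}$ by expanding the sharp HLS--Sobolev quotient $I(W+\alpha h)$ to second order in $\alpha$ (this is the clean version of what you gesture at as a ``min-max comparison''), then takes a minimizing sequence $f_n$ with $\Phi_1(f_n)\to 0$, $\|f_n\|_{\dot H^1}=1$, $f_n\in H^\perp$, extracts a weak limit $f_*$, uses compactness of the potential terms plus Fatou to see $\Phi_1(f_*)=0$, runs the Euler--Lagrange/nondegeneracy argument to conclude $f_*=0$, and then observes that the now-vanishing potential terms force $\|\nabla f_n\|_{L^2}\to 0$, a contradiction. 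You need this structure; the splitting trick is not a substitute for it.

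Two smaller points. First, you mix up pairings: the constraints in $H^\perp$ are $\dot H^1$-orthogonalities, so the relevant nonvanishing is $(W,\chi)_{\dot H^1}=\int (I_\lambda*W^p)W^{p-1}\chi\neq 0$ rather than $\langle W,\chi\rangle_{L^2}\neq 0$; both happen to hold because $\chi>0$ and $W>0$, but your justification invokes the wrong one. Second, your assertion that ``a min-max comparison using Proposition \ref{SharpConstant} rules out any further negative eigenvalues'' is exactly the content that requires the second-variation computation of $I(W+\alpha h)$; stating it without that computation leaves the crucial ingredient unproved.
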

	For the sake of completeness, we will show it in Appendix \ref{appen CoerH}.

	\begin{proposition}\label{spectral}Let $\sigma(\mathcal{L})$ be the spectrum of the operator
		$\mathcal{L}$, defined on $L^2(\R^N)$ with domain $ H^2(\R^N)$ and let
		$\sigma_{ess}(\mathcal{L})$ be its essential spectrum. Then we have
		\begin{enumerate}
			\item[\rm (a)]The operator $\mathcal{L}$ admits two radial
			eigenfunctions $\mathcal{Y}_{\pm} \in \mathcal{S}(\R^N) $ with real
			eigenvalues $\pm e_0$, $e_0
			> 0$, that is, $ \mathcal{L} \mathcal{Y}_{\pm} = \pm e_0
			\mathcal{Y}_{\pm},\; \mathcal{Y}_+ = \overline{\mathcal{Y}}_- ,\;
			e_0
			> 0.
			$
			
			\item[\rm (b)] There exists a constant $c>0$ such that  for all radial function $ h
			\in G_{\bot}$, we have
			\begin{equation*}
				\aligned  \Phi(h)\geq c\big\| h \big\|^2_{\dot H^1},
				\endaligned
			\end{equation*}
			where \begin{equation*}
				\label{G orthy} G_{\bot}  =\left\{v\in \dot H^1, (iW, v)_{\dot
					H^1}=(\widetilde{W}, v)_{\dot H^1}  = B(\mathcal{Y}_\pm, v)  =
				0\right\}.
			\end{equation*}
			
			\item[\rm (c)]
			$\sigma_{ess}(\mathcal{L})=\{i\xi: \xi\in \R, \}, \quad
			\sigma(\mathcal{L})\cap \R =\{-e_0, 0, e_0\}. $
		\end{enumerate}
	\end{proposition}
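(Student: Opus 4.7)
My plan is to prove the three claims in the order: essential spectrum from (c), then (a), then (b), and finally the identification of $\sigma(\mathcal{L})\cap\R$ in (c). All four pieces rest on two scalar facts about the block entries of $\mathcal{L}$: the operator $L_-$ is non-negative with $\ker L_- = \mathrm{span}\{W\}$ (because $W>0$ is a ground state, so the Perron-Frobenius argument applies), and on the radial sector $L_+$ has exactly one negative eigenvalue with $\ker(L_+|_{\mathrm{rad}}) = \mathrm{span}\{\widetilde W\}$; the kernel identification here is exactly Proposition \ref{prop:nondegeneracy} restricted to radial directions.

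For the essential spectrum, I would decompose $\mathcal{L} = \mathcal{L}_0 + \mathcal{K}$, where $\mathcal{L}_0 = \bigl(\begin{smallmatrix} 0 & \Delta \\ -\Delta & 0 \end{smallmatrix}\bigr)$ and $\mathcal{K}$ collects the $W$-dependent lower-order terms. A direct Fourier computation gives $\sigma(\mathcal{L}_0) = i\R$. The perturbation $\mathcal{K}$ is relatively compact with respect to $\mathcal{L}_0$: the local multiplication pieces with coefficient $W^{p-2}(I_\lambda * W^p)$ decay at infinity, so they are compact by Rellich-Kondrashov, while the genuinely nonlocal piece $h\mapsto p\,W^{p-1}(I_\lambda*(W^{p-1}h))$ is compact by combining Lemma \ref{L:hardy} with the decay of the outer factor $W^{p-1}$. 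Weyl's theorem on stability of essential spectrum then gives $\sigma_{\mathrm{ess}}(\mathcal{L}) = i\R$.

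For part (a), I would produce the eigenvalues variationally by exhibiting a negative eigenvalue $-e_0^2$ for $L_-L_+$. A direct pairing with the ground-state equation \eqref{gs} gives $\langle L_+ W, W\rangle_{L^2} = -2(p-1)\|\nabla W\|_{L^2}^2 < 0$, so $L_+$ admits a radial negative direction. Together with $L_-\geq 0$, the standard Pego-Weinstein-type analysis of the self-adjoint operator $L_-^{1/2}L_+L_-^{1/2}$ on $\overline{\mathrm{Ran}\, L_-^{1/2}}$ yields a radial $Y_1$ with $L_-L_+Y_1 = -e_0^2 Y_1$, $e_0>0$. Setting $Y_2 := e_0^{-1}L_+ Y_1$ and $\mathcal{Y}_\pm := Y_1 \pm i Y_2$, the relations $L_+Y_1=e_0Y_2$ and $-L_-Y_2=e_0Y_1$ hold by construction, whence $\mathcal{L}\mathcal{Y}_\pm = \pm e_0\mathcal{Y}_\pm$ and $\mathcal{Y}_+=\overline{\mathcal{Y}_-}$. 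Schwartz regularity of $\mathcal{Y}_\pm$ follows from Agmon-type exponential decay (available because $\pm e_0\notin\sigma_{\mathrm{ess}}(\mathcal{L})$), followed by an elliptic bootstrap that exploits the smoothness of $W$ and Lemma \ref{L:hardy} for the nonlocal coefficient.

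For the coercivity in (b), a short calculation gives
\begin{equation*}
B(\mathcal{Y}_+, v) = \tfrac{e_0}{2}\bigl(\langle Y_2, v_1\rangle_{L^2} - \langle Y_1, v_2\rangle_{L^2}\bigr), \qquad B(\mathcal{Y}_-, v) = \tfrac{e_0}{2}\bigl(\langle Y_2, v_1\rangle_{L^2} + \langle Y_1, v_2\rangle_{L^2}\bigr),
\end{equation*}
so the pair $B(\mathcal{Y}_\pm, v)=0$ is equivalent to $\langle Y_1, v_2\rangle_{L^2} = \langle Y_2, v_1\rangle_{L^2} = 0$. These two conditions kill precisely the single negative eigendirection of $\Phi$ supported on $\mathrm{span}\{\mathcal{Y}_+, \mathcal{Y}_-\}$, while the two remaining $\dot H^1$-orthogonalities in the definition of $G_\bot$ kill the zero modes $iW, \widetilde W$. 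A standard compactness-contradiction argument (assume $\inf_{G_\bot} \Phi(v)/\|v\|_{\dot H^1}^2 = 0$, extract a minimizing sequence via profile decomposition, rule out the spectral and kernel directions by the four orthogonality conditions, and invoke Proposition \ref{coerH} to handle the essential-spectrum contribution) then gives $\Phi(v)\geq c\|v\|_{\dot H^1}^2$ on $G_\bot$. For the real spectrum, any real eigenvalue $\mu$ of $\mathcal{L}$ forces $-\mu^2$ to be an eigenvalue of $L_-L_+$; the variational picture above shows $-e_0^2$ is the unique negative eigenvalue, so $\mu\in\{0,\pm e_0\}$, with $\mu=0$ being Lemma \ref{keyassumption}.

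The principal obstacle is the nonlocal Hartree term $p(I_\lambda*(W^{p-1}\cdot))W^{p-1}$ inside $L_+$: verifying that this term does not shift the radial Morse index of $L_+$ away from one, and that the full perturbation $\mathcal{K}$ is genuinely $\mathcal{L}_0$-compact, both rely on careful HLS-based estimates rather than the elementary Rellich argument available for local NLS. Moreover, the kernel identification underlying both the Morse index count in (a) and the restriction $\mu\in\{0,\pm e_0\}$ in (c) depends crucially on Proposition \ref{prop:nondegeneracy}, which is precisely the new input of this paper and replaces the spectral assumption of \cite{MiaoWX:dynamic gHartree}.
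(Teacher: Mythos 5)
Your overall architecture matches the paper's: the key objects are $\mathcal{P}=L_-^{1/2}L_+L_-^{1/2}$, the negativity $\langle L_+W, W\rangle_{L^2}=-2(p-1)\|\nabla W\|^2_{L^2}<0$, and the kernel identification via Proposition~\ref{prop:nondegeneracy}. However, there are two concrete gaps at precisely the places the paper flags as the difficult steps.

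\textbf{Gap 1: the negative direction for $\mathcal{P}$ when $N=3,4$.} To run the Pego--Weinstein argument you need to exhibit $F$ in $\overline{\mathrm{Ran}\,L_-^{1/2}}$ with $(L_+F,F)_{L^2}<0$; the pairing $(L_+W,W)_{L^2}<0$ does not immediately supply such an $F$. When $N\geq 5$ one has $W\in L^2$, $\ker L_-=\mathrm{span}\{W\}$, so $\overline{\mathrm{Ran}\,L_-}=\{W\}^{\perp_{L^2}}$; since $W\notin\{W\}^\perp$ you cannot take $F=W$. The paper instead takes the actual negative $L_+$-eigenfunction $Z$, shifts by a multiple of $\widetilde W\in\ker L_+$ to enforce $(Z+\alpha_1\widetilde W,W)_{L^2}=0$ (using $(W,\widetilde W)_{L^2}\neq 0$), and then approximates within $\mathrm{Ran}(L_-)$. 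When $N=3,4$, the situation is worse: $W\notin L^2(\R^N)$, so neither $W$ itself nor the pairing $\langle L_+W,W\rangle$ fits directly into the $L^2$ quadratic-form picture, and $\Delta+V$ has trivial $L^2$-kernel so $\mathrm{Ran}(L_-)$ is merely dense. The paper explicitly marks this as the point of departure from \cite{MiaoWX:dynamic gHartree} and handles it with a localization $W_a:=\chi(x/a)W$, verifying by explicit computation (using the pointwise decay $W\lesssim |x|^{-(N-2)}$ and $|\nabla W|\lesssim |x|^{-(N-1)}$) that $\int L_+W_a\cdot W_a<0$ for large $a$, and then approximating $W_a$ by elements of $\mathrm{Ran}(\Delta+V)$. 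Your proposal never addresses $N=3,4$, which is the new content of part (a) here.

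\textbf{Gap 2: Schwartz regularity of $\mathcal{Y}_\pm$.} You appeal to ``Agmon-type exponential decay (available because $\pm e_0\notin\sigma_{\mathrm{ess}}(\mathcal{L})$) followed by an elliptic bootstrap.'' This is the route for local NLS, and the paper states expressly that this decay argument from \cite{DuyMerle:NLS:ThresholdSolution} \emph{does not work} because of the nonlocal Hartree term on the right-hand side. The paper instead rewrites the eigenvalue equation as $(e_0-\Delta)^2\mathcal{Y}_1 = (\text{lower order, nonlocal})$ and uses the explicit Bessel kernel $G$ of $(e_0-\Delta)^{-2}$ (its $|x|^{-N+4}$ singularity at the origin and its exponential decay at infinity) together with the decay of $W$ to propagate decay through the convolution. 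A substitute for that step is needed; the Agmon argument as stated would not close.

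Parts (b) and (c) in your proposal (compactness-contradiction for coercivity on $G_\bot$, and the identification of $\sigma(\mathcal{L})\cap\R$ via $L_-L_+$) are in line with the standard argument, to which the paper simply refers.
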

	\begin{remark}
		The proofs of (b) and (c)  of Proposition \ref{spectral} are analogue to that of Proposition $2.14$ in \cite{MiaoWX:dynamic gHartree}, but there is a little difference in the proof of (a). Here, we consider the case $N\geq3$ rather than the case $N>4=\lambda$ in \cite{MiaoWX:dynamic gHartree}. However, when $N=3,4,\;W\notin L^2(\R^N)$,  we will give the proof of (a) for the completeness in Appendix \ref{appen-spectralprop}. 
	\end{remark}

	\begin{remark}\label{r:bilinear form prop}
		As a consequence of the definition of $\Phi$ and $B$, we have that for
		any $ f, g \in \dot H^1(\R^N), \mathcal{L}f, \mathcal{L}g \in \dot H^1 (\R^N)$,
		and $ h\in H^{\bot}$
		\begin{align}
			\Phi(iW)=&\Phi(\widetilde{W})=\Phi(\mathcal{Y}_\pm)=0, \;\; \Phi(W)=-\big\|\nabla W\big\|^2_2<0,  \label{Phi}\\
			B(f,g)=&B(g,f), \;\; B(\mathcal{L}f, g)=-B(f,\mathcal{L}g), \label{B-pro}\\
			B(iW, f)=&B(\widetilde{W}, f)=0, \;\;  B(W, h)=0
			.\label{B-orthEnerHbot}
		\end{align}
	\end{remark}
	
	\begin{corollary}
		As a consequence of  Proposition \ref{coerH}, we have
		\begin{align}
			B(\mathcal{Y}_+, \mathcal{Y}_-)\not = 0. \label{B-orthEigen}
		\end{align}
	\end{corollary}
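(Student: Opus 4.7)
The plan is to argue by contradiction: assume $B(\mathcal{Y}_+, \mathcal{Y}_-) = 0$ and derive a contradiction by combining the eigenvalue equation for $\mathcal{Y}_\pm$ with the coercivity of $\Phi$ on $H^\perp$ from Proposition \ref{coerH}. The strategy exploits the identity $\mathcal{Y}_+ = \overline{\mathcal{Y}_-}$ to single out a purely imaginary test function, which sidesteps the unique negative direction $W$ of $\Phi$.

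First, I would compute, using bilinearity of $B$ and the identities $\Phi(\mathcal{Y}_\pm) = 0$ from \eqref{Phi},
\begin{equation*}
\Phi\bigl(\mathcal{Y}_+ - \mathcal{Y}_-\bigr) = \Phi(\mathcal{Y}_+) + \Phi(\mathcal{Y}_-) - 2\, B(\mathcal{Y}_+, \mathcal{Y}_-) = 0.
\end{equation*}
Since $\mathcal{Y}_+ - \mathcal{Y}_- = 2i\,\Im\mathcal{Y}_+$ is purely imaginary while $W$ and $\widetilde W$ are real, the real $\dot H^1$-inner products $(W, \mathcal{Y}_+ - \mathcal{Y}_-)_{\dot H^1}$ and $(\widetilde W, \mathcal{Y}_+ - \mathcal{Y}_-)_{\dot H^1}$ vanish automatically. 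Using the orthogonal splitting $\dot H^1_{\text{rad}} = H \oplus H^\perp$ with $H = \text{span}\{W, iW, \widetilde W\}$, I then write
\begin{equation*}
\mathcal{Y}_+ - \mathcal{Y}_- = \alpha\, iW + h, \qquad h \in H^\perp,
\end{equation*}
for some real scalar $\alpha$.

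Expanding $\Phi$ bilinearly and invoking $\Phi(iW) = 0$ together with $B(iW, \cdot) \equiv 0$ from \eqref{Phi} and \eqref{B-orthEnerHbot} gives $\Phi(h) = \Phi(\mathcal{Y}_+ - \mathcal{Y}_-) = 0$. Proposition \ref{coerH} then forces $h = 0$, so $\Im \mathcal{Y}_+ = \gamma W$ for some real $\gamma$. Inserting $\mathcal{Y}_+ = \Re\mathcal{Y}_+ + i\gamma W$ into the eigenvalue equation $\mathcal{L}\mathcal{Y}_+ = e_0 \mathcal{Y}_+$ in the matrix form \eqref{linearequat}, the first component yields $-L_-(\gamma W) = e_0\,\Re\mathcal{Y}_+$; since $L_- W = 0$ by Lemma \ref{keyassumption} and $e_0 > 0$, this gives $\Re\mathcal{Y}_+ = 0$. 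The second component then reads $0 = L_+\,\Re\mathcal{Y}_+ = e_0 \gamma W$, forcing $\gamma = 0$ and hence $\mathcal{Y}_+ \equiv 0$, which contradicts $\mathcal{Y}_+$ being a nontrivial eigenfunction. Therefore $B(\mathcal{Y}_+, \mathcal{Y}_-) \neq 0$.

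The only subtle point is the choice of the imaginary combination $\mathcal{Y}_+ - \mathcal{Y}_-$ rather than the real combination $\mathcal{Y}_+ + \mathcal{Y}_-$: the latter picks up a genuine $W$-component under orthogonal decomposition, and since $W$ is the unique negative direction of $\Phi$, the corresponding identity would only yield $\Phi(h) \geq 0$, not $\Phi(h) = 0$, and thus would not force $h = 0$ via coercivity. The imaginary combination is orthogonal to the real vectors $W$ and $\widetilde W$ by inspection, $iW$ is $\Phi$- and $B$-null, and coercivity then closes the argument cleanly.
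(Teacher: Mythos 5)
Your proof is correct, but it follows a genuinely different route from the paper's. The paper argues by a quick dimension count: if $B(\mathcal{Y}_+,\mathcal{Y}_-)=0$ then, using $\Phi(iW)=\Phi(\widetilde W)=\Phi(\mathcal{Y}_\pm)=0$ and the $B$-orthogonality of $iW,\widetilde W$ to everything, $\Phi$ vanishes on the four-dimensional subspace $\text{span}\{iW,\widetilde W,\mathcal{Y}_+,\mathcal{Y}_-\}$; since $H^\bot$ has codimension $3$ in $\dot H^1_{\mathrm{rad}}$, that null subspace must meet $H^\bot$ nontrivially, contradicting coercivity. You instead isolate the single purely imaginary vector $\mathcal{Y}_+-\mathcal{Y}_-=2i\,\Im\mathcal{Y}_+$, which by inspection is automatically $\dot H^1$-orthogonal to the real directions $W$ and $\widetilde W$, so its $H^\bot$-component $h$ satisfies $\Phi(h)=\Phi(\mathcal{Y}_+-\mathcal{Y}_-)=0$ and coercivity kills $h$; you then pin $\Im\mathcal{Y}_+$ to $\gamma W$, feed this back into the eigenvalue equation $\mathcal{L}\mathcal{Y}_+=e_0\mathcal{Y}_+$, and invoke $L_-W=0$ from Lemma \ref{keyassumption} to force $\mathcal{Y}_+=0$. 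The paper's argument is shorter and uses only the bilinear structure of $\Phi$ plus the codimension of $H^\bot$ (at the cost of implicitly needing the four vectors to be independent, which does follow easily from the eigenvalue equations). Your argument is more explicit: it avoids the dimension count by choosing the clever imaginary combination, but it leans on additional inputs, namely the matrix form of the eigenvalue equation and the kernel of $L_-$, so it proves the corollary from Proposition \ref{coerH} together with Lemma \ref{keyassumption} rather than from coercivity alone. Both are sound; your remark about why $\mathcal{Y}_++\mathcal{Y}_-$ would not work (its $W$-component lies in the unique negative direction of $\Phi$) is also correct and well observed.
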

	\begin{proof}  If $B(\mathcal{Y}_+, \mathcal{Y}_-) = 0$, then for any $h\in \text{span} \{iW, \widetilde{W}, \mathcal{Y}_{\pm}\}$, which is of dimension 4, we have
		$$\Phi(h)=0.$$
		But by Proposition \ref{coerH}, we know that $\Phi$ is positive on
		$H^{\bot} \cap \dot H^1_{\text{rad}}$, which is of codimension 3. It is a
		contradiction.
	\end{proof}

	\begin{corollary}As a consequence of Proposition
		\ref{spectral}, we have
		\begin{equation}\label{WYrelation}
			\aligned \int \nabla W \cdot \nabla \mathcal{Y}_1 \; dx \not =0,
			\quad \text{where}\; \mathcal{Y}_1=\Re \mathcal{Y}_+.
			\endaligned
		\end{equation}
	\end{corollary}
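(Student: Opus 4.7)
The plan is to argue by contradiction using the coercivity of $\Phi$ on the subspace $G_\bot$ established in Proposition \ref{spectral}(b). The target is to show that if $\int \nabla W \cdot \nabla \mathcal{Y}_1 \, dx = 0$, then $W$ itself lies in $G_\bot$, contradicting the strict inequality $\Phi(W) < 0$ recorded in \eqref{Phi}.

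The core step is a Pohozaev-type identity for $L_+$ evaluated at the ground state: plugging $h_1 = W$ into the definition \eqref{Lpos} and using the ground-state equation \eqref{gs} to replace $(I_\lambda * W^p) W^{p-1}$ by $-\Delta W$, I would obtain
\begin{equation*}
L_+ W = -\Delta W - (2p-1)(I_\lambda * W^p) W^{p-1} = 2(p-1) \Delta W.
\end{equation*}
Combined with the self-adjointness of $L_+$, the identity $L_- W = 0$, the decomposition $\mathcal{Y}_+ = \mathcal{Y}_1 + i \mathcal{Y}_2$, and one integration by parts, the definition \eqref{bilinear form} then yields
\begin{equation*}
B(\mathcal{Y}_+, W) = \tfrac{1}{2} \int \mathcal{Y}_1 (L_+ W) \, dx = -(p-1)\int \nabla W \cdot \nabla \mathcal{Y}_1 \, dx,
\end{equation*}
and the same identity holds for $B(\mathcal{Y}_-, W)$ since $\Re \mathcal{Y}_- = \Re \overline{\mathcal{Y}_+} = \mathcal{Y}_1$. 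Under the contradiction hypothesis both quantities therefore vanish.

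To conclude $W \in G_\bot$, it remains to check the two $\dot H^1$ orthogonalities $(iW, W)_{\dot H^1} = 0$ and $(\widetilde W, W)_{\dot H^1} = 0$. The first is immediate from the fact that $W$ is real; the second is obtained by differentiating at $\delta = 1$ the scaling identity $\|\nabla(\delta^{(N-2)/2} W(\delta \cdot))\|_2^2 \equiv \|\nabla W\|_2^2$, whose derivative in $\delta$ produces $2(\widetilde W, W)_{\dot H^1} = 0$. Once $W \in G_\bot$ is established, Proposition \ref{spectral}(b) forces $\Phi(W) \geq c \|\nabla W\|_2^2 > 0$, directly contradicting \eqref{Phi}.

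I do not foresee any serious obstacle: the whole argument rests on the algebraic identity $L_+ W = 2(p-1) \Delta W$, which follows immediately from the nonlinear elliptic equation for $W$, together with the coercivity of $\Phi$ on $G_\bot$ and the sign information $\Phi(W) < 0$, both already established earlier in this section.
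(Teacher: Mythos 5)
Your proposal is correct and follows essentially the same strategy as the paper: compute $L_+W = 2(p-1)\Delta W$, show that the contradiction hypothesis forces $W \in G_\bot$, and conclude via the coercivity of $\Phi$ on $G_\bot$ against the fact $\Phi(W) < 0$. The only (minor and harmless) difference is that you compute $B(\mathcal{Y}_\pm, W)$ directly by the self-adjointness of $L_+$, whereas the paper reaches the same vanishing through the antisymmetry $B(\mathcal{L}f,g)=-B(f,\mathcal{L}g)$ and the eigenvalue relation $L_-\mathcal{Y}_2=-e_0\mathcal{Y}_1$; your route is a slight shortcut but not a genuinely different argument.
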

	\begin{proof}
		Note that
		$$(W, iW)_{\dot H^1}=(W, \widetilde{W})_{\dot H^1}=0.$$
		By \eqref{har}, we have $ L_+(W)= 2(p-1) \Delta W.$ If $ (W,
		\mathcal{Y}_1)_{\dot H^1} =0,$ then by \eqref{B-pro}, we obtain
		\begin{equation*}
			\aligned \pm e_0 B(\mathcal{Y}_{\pm}, W) = &  B(\mathcal{L}
			\mathcal{Y}_{\pm}, W) = - B(\mathcal{Y}_{\pm}, \mathcal{L} W) \\
			=& \mp\frac12 \int L_{-}\mathcal{Y}_2 \cdot L_+(W) = \pm\frac12 \int e_0
			\mathcal{Y}_1 \cdot  L_+(W)  =  \pm(p-1)\int e_0 \mathcal{Y}_1 \cdot \Delta
			W =0,
			\endaligned
		\end{equation*}
		which means that $W \in G_{\bot}$. By Proposition \ref{spectral}, we
		have $\Phi(W) \gtrsim \big\|W \big\|^2_{\dot H^1}$, which
		contradicts  with \eqref{Phi}.
	\end{proof}

	%
	%
	%
	%
	
	\section{Existence of special threshold solutions $W^{\pm}$}\label{S:existence}
	In this section, we follow the arguments in \cite{DuyMerle:NLS:ThresholdSolution, DuyMerle:NLW:ThresholdSolution, LiZh:NLS, LiZh:NLW} and \cite {MiaoWX:dynamic gHartree} to show the existence of special threshold solutions $U^a$ and $W^{\pm}$ in the energy space, we can also refer to \cite{RaS11:NLS:mini sol} for the ideas from the existence of minimal mass blow-up solutions for mass-critical NLS.  We firstly construct a family of high-order approximate
	solutions to \eqref{har} by use of the spectral property of the linearized operator $\mathcal{L}$ and a series of the elliptic iterations, and then obtain the special solution $U^a$ by the compactness argument. The proof here is  similar to those in Section 3 in \cite{MiaoWX:dynamic gHartree},  we only give the sketch of proof.
	
	\subsection{A family of approximate solutions converging to $W$ as $t\rightarrow +\infty$.}

	\begin{proposition}\label{constru:approximatesolution}
		Let $a\in \R$. There exists a sequence of functions
		$(\ZZZ^a_j)_{j\geq 1}$ in $\mathcal{S}(\R^N)$ such that
		$\ZZZ^a_1=a\mathcal{Y}_+$ and if
		\begin{equation}\label{h app}
			\aligned
			h^a_k(t,x):=  \sum^k_{j=1}e^{-je_0t}\mathcal{Z}^a_j,\; k\in \Z^+,
			\endaligned\end{equation}
		then we have as $t\rightarrow +\infty$
		\begin{equation}\label{approxerror}
			\aligned \epsilon_k:=&  \partial_t  h^a_k + \mathcal{L}  h^a_k  -
			R(h^a_k) = O(e^{-(k+1)e_0t}) \quad \text{in }\;\mathcal{ S }(\R^N).
			\endaligned
		\end{equation}
	\end{proposition}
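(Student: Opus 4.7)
My plan is to construct the sequence $(\mathcal{Z}^a_j)_{j\ge 1}$ by induction on $k$, solving one linear elliptic problem at each step, along the lines of the formal unstable-manifold expansion in \cite{DuyMerle:NLS:ThresholdSolution, LiZh:NLS, MiaoWX:dynamic gHartree}. For the base case $k=1$, I set $\mathcal{Z}^a_1 := a\mathcal{Y}_+$; since $\mathcal{L}\mathcal{Y}_+ = e_0\mathcal{Y}_+$ by Proposition~\ref{spectral}(a), this gives $(\partial_t + \mathcal{L})h^a_1 = 0$, so $\epsilon_1 = -R(h^a_1)$. Because $R(h)$ in \eqref{remainder} vanishes at least quadratically in $h$ near $h=0$, and since $W>0$ is smooth with $\mathcal{Y}_+ \in \mathcal{S}(\R^N)$, a Taylor expansion of $|W+h|^{p-2}(W+h)$ together with Hardy--Littlewood--Sobolev-type estimates for $I_\lambda\ast\,\cdot\,$ yields $R(ae^{-e_0 t}\mathcal{Y}_+) = O(e^{-2e_0 t})$ in $\mathcal{S}(\R^N)$.

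For the induction step, suppose that Schwartz functions $\mathcal{Z}^a_1,\dots,\mathcal{Z}^a_k$ have been produced with $\epsilon_k = O(e^{-(k+1)e_0 t})$ in $\mathcal{S}$. Because $h^a_k$ is a polynomial in $e^{-e_0 t}$ with Schwartz coefficients, a finite Taylor expansion of $R(h^a_k)$ organizes contributions by powers of $e^{-e_0 t}$ and extracts the leading-order term,
\begin{equation*}
\epsilon_k(t,x) = e^{-(k+1)e_0 t}\,\Xi^a_{k+1}(x) + O(e^{-(k+2)e_0 t}) \quad \text{in } \mathcal{S}(\R^N),
\end{equation*}
where $\Xi^a_{k+1}$ is an explicit multilinear expression in $\mathcal{Z}^a_1,\dots,\mathcal{Z}^a_k$, $W$, and $I_\lambda$-convolutions, hence Schwartz. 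Defining $h^a_{k+1} := h^a_k + e^{-(k+1)e_0 t}\mathcal{Z}^a_{k+1}$, one has $(\partial_t + \mathcal{L})(e^{-(k+1)e_0 t}\mathcal{Z}^a_{k+1}) = e^{-(k+1)e_0 t}[\mathcal{L}-(k+1)e_0]\mathcal{Z}^a_{k+1}$, so the matching condition at order $e^{-(k+1)e_0 t}$ reduces to the linear elliptic equation
\begin{equation*}
[\mathcal{L} - (k+1)e_0]\,\mathcal{Z}^a_{k+1} = \Xi^a_{k+1},
\end{equation*}
and the residual $R(h^a_{k+1}) - R(h^a_k) = O(e^{-(k+2)e_0 t})$ in $\mathcal{S}$ follows by the same Taylor argument, closing the induction provided the above equation is solvable in $\mathcal{S}(\R^N)$.

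The main obstacle is exactly this solvability within $\mathcal{S}(\R^N)$. By Proposition~\ref{spectral}(c), $(k+1)e_0 \ge 2e_0$ lies outside $\sigma(\mathcal{L})$, so $\mathcal{L}-(k+1)e_0$ is a bijection from $H^2$ to $L^2$ and the $L^2$-solution exists and is unique. To upgrade to Schwartz regularity, I plan to decouple the system by eliminating the imaginary component $\mathcal{Z}^a_{k+1,2}$ in favor of $\mathcal{Z}^a_{k+1,1}$, reducing the problem to a scalar fourth-order elliptic equation of the form $(L_-L_+ + (k+1)^2 e_0^2)\mathcal{Z}^a_{k+1,1} = \widetilde{\Xi}$. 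Standard elliptic bootstrap against the smooth, polynomially-decaying coefficients $I_\lambda \ast W^p$ and $W^{p-2}$ yields $C^\infty$ smoothness, while rapid decay of $\mathcal{Z}^a_{k+1,1}$ and all its derivatives is propagated from the Schwartz right-hand side using the spectral gap, in parallel with the exponential decay analysis of $\mathcal{Y}_\pm$ carried out in Appendix~\ref{appen-spectralprop}. This is the step where the nondegeneracy result of Proposition~\ref{prop:nondegeneracy}, through the spectral structure of $\mathcal{L}$, is used essentially.
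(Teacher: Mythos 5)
Your proposal is correct and follows essentially the same route as the paper's (very brief) sketch: base case from $\mathcal{L}\mathcal{Y}_+=e_0\mathcal{Y}_+$ and the quadratic vanishing of $R$, then an inductive elliptic iteration in which the coefficient of $e^{-(k+1)e_0 t}$ in the error is cancelled by solving $[\mathcal{L}-(k+1)e_0]\ZZZ^a_{k+1}=-\Xi^a_{k+1}$, with solvability guaranteed because $(k+1)e_0\geq 2e_0$ avoids $\sigma(\mathcal{L})\cap\R=\{0,\pm e_0\}$ (Proposition \ref{spectral}(c)); the decoupling into a scalar fourth-order equation and the bootstrap for regularity and decay are also the expected steps. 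Note one sign discrepancy (the matching condition should read $[\mathcal{L}-(k+1)e_0]\ZZZ^a_{k+1}=-\Xi^a_{k+1}$, or equivalently absorb the sign into $\Xi^a_{k+1}$), which is harmless.

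One point you assert rather than prove, and which deserves more care because of the nonlocal structure, is the claim that $\Xi^a_{k+1}$ is automatically Schwartz. Terms of the form $\bigl(I_\lambda*(W^{p-m}\,\ZZZ_{j_1}\cdots \ZZZ_{j_m})\bigr)W^{p-1}$ with $m\geq 2$ and no extra Schwartz factor outside the convolution decay only polynomially (of order $|x|^{-(N+2)}$), since $I_\lambda$ applied to a Schwartz function has polynomial tails; this is precisely the place where the Hartree case diverges from the local critical NLS treated in \cite{DuyMerle:NLS:ThresholdSolution}. The paper's sketch does not address this either, but if you flesh the argument out you should either verify that the relevant multilinear combinations always carry a Schwartz prefactor, or work in a larger space of rapidly-but-polynomially-decaying functions that is stable under the iteration and still suffices for the later fixed-point argument in Proposition \ref{existence:thresholdsolution}.
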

	
	\begin{remark}\label{epsilon}
		Let  
		\begin{equation}\label{u app}
			\aligned
			U^a_k(t,x):= W(x)+ h^a_k(t,x),
			\endaligned\end{equation}
		then as $t\rightarrow +\infty$, we have the high-order error estimate
		\begin{equation*}
			\aligned \epsilon_k:=& i \partial _t U^a_k + \Delta  U^a_k  + [I_{\lambda}*|U^a_k|^{p}]|U^a_k|^{p-2} U^a_k = O(e^{-(k+1)e_0t}) \quad
			\text{in }\; \mathcal{ S }(\R^N).
			\endaligned
		\end{equation*}
	\end{remark}
	
	\begin{proof}[Sketch of proof]
		Using the eigenfunctions of the real eigenvalues of the linearized operator, we can define $h^a_1:=e^{-e_0t}\ZZZ^a_1$. By the definition of \eqref{remainder}, we know that $R$ has at
		least the quadratic term, that is, $R \left(e^{-e_0t}\ZZZ^a_1\right)
		= O(e^{-2e_0t})$, which yields \eqref{approxerror} for $k=1$. we can use  a series of the elliptic iterations to cancel low-order error term, which together with the induction argument implies the existence of $\mathcal{Z}^a_j$, $j=2,\ldots$.
	\end{proof}
	
	\subsection{Construction of special solutions near an approximate solution.} Now we show the
	existence of special radial threshold solution which can be proved by the fixed point argument.
	\begin{proposition}\label{existence:thresholdsolution}
		Let $a \in \R$. There exist $k_0>0$ and $t_0 \geq 0$ such that for
		any $k\geq k_0$, there exists a radial solution $U^a$ of \eqref{har}
		such that for $t\geq t_0$
		\begin{equation}\label{difference:Stri} \aligned
			\left\|  U^a-U^a_k  \right\|_{l(t, +\infty)}  \leq
			e^{-(k+\frac{1}{2})e_0t}.
			\endaligned
		\end{equation}
		Furthermore $U^a$ is the unique solution of \eqref{har} satisfying
		\eqref{difference:Stri} for large $t$, and $U^a\in L^2(\R^N)$ if $N\geq 5$. Finally, $U^a$ is independent
		of $k$ and satisfies for $t\geq t_0$,
		\begin{equation}\label{difference:Energy}
			\aligned   \big\|  U^a(t)- W - a e^{-e_0t}\mathcal{Y}_+ \big\|_{\dot
				H^1(\R^N)}  \leq e^{-\frac{3}{2}e_0t}.
			\endaligned
		\end{equation}
	\end{proposition}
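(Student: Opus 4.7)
\medskip

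\textbf{Plan.} The plan is to construct $U^a$ as the limit of a sequence of approximating solutions that vanish at $t=T_n\to+\infty$, and then to identify it with $W+h^a_k+g$ for a perturbation $g$ controlled in a weighted Strichartz space. First I would set $g:=U^a-U^a_k$, so that, by Remark \ref{epsilon} and Duhamel's formula, $g$ satisfies the integral equation
\begin{equation*}
g(t)=i\int_{T}^{t}e^{i(t-\tau)\Delta}\Bigl[F(U^a_k+g)-F(U^a_k)\Bigr]\!(\tau)\,d\tau\;+\;i\int_{T}^{t}e^{i(t-\tau)\Delta}\epsilon_k(\tau)\,d\tau,\qquad t\in[t_0,T],
\end{equation*}
with terminal condition $g(T)=0$, where $F(u)=(I_\lambda\ast|u|^p)|u|^{p-2}u$. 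Splitting $F(U^a_k+g)-F(U^a_k)=Vg+\bigl(R(h^a_k+g)-R(h^a_k)\bigr)$, one sees that the linear potential term $Vg$ is $O(|I|^{(p-1)/p}\|g\|_{l(I)})$ by \eqref{linearestimate}, while the genuinely nonlinear piece is controlled by \eqref{nonlinearestimates} applied to $h^a_k+g$ and $h^a_k$.

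Next, for $k\geq k_0$ large enough, I would run a Banach fixed point argument on the weighted complete metric space
\begin{equation*}
\mathcal{B}_T:=\Bigl\{g\in l([t_0,T]):\ \sup_{t\in[t_0,T]}e^{(k+\tfrac{1}{2})e_0 t}\,\|g\|_{l(t,T)}\leq 1\Bigr\},
\end{equation*}
running over short subintervals of length $\tau_0$ and summing with Lemma \ref{summation}. The Strichartz estimate applied to the inhomogeneous term $\epsilon_k=O(e^{-(k+1)e_0t})$ (in $\mathcal{S}(\R^N)$, hence in every $\dot W^{1,r}$) contributes a factor $e^{-(k+1)e_0 t}$; Lemma \ref{linearoperator:prelimestimate} together with the smallness of $\tau_0$ and of the $l$-norm of $h^a_k$ on small intervals gives a contraction with ratio $<\tfrac12$. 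This produces a unique $g_T\in\mathcal{B}_T$ with $\|g_T\|_{l(t,T)}\lesssim e^{-(k+\tfrac12)e_0 t}$, uniformly in $T$. Lemma \ref{Striestimate} then gives the same weighted bound for $\|g_T(t)\|_{\dot H^1}$.

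To remove $T$, I take $T_n\to+\infty$. The sequence $\{g_{T_n}(t_0)\}$ is bounded in $\dot H^1_{\mathrm{rad}}(\R^N)$, and for radial data in dimension $N\geq 3$ we get a strongly convergent subsequence in $L^{2p}_{\mathrm{loc}}$; Theorem \ref{T:local}(c) promotes this to local-uniform convergence of the associated $\dot H^1$-solutions on every compact subinterval, producing $g_\infty$ on $[t_0,+\infty)$ that inherits the weighted bound \eqref{difference:Stri}. Uniqueness of $U^a$ satisfying \eqref{difference:Stri} will be obtained by applying the summation argument of Lemma \ref{summation} to the difference of two candidates, which again satisfies a linearized equation with the source term quadratic in the smallness parameter. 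The independence of $k$ follows from this uniqueness together with $\|U^a_{k+1}-U^a_k\|_{l(t,+\infty)}\lesssim e^{-(k+1)e_0 t}$. For the final estimate \eqref{difference:Energy} one notes $\|h^a_k-a e^{-e_0 t}\mathcal{Y}_+\|_{\dot H^1}\lesssim e^{-2e_0 t}$ from \eqref{h app} and combines it with the $\dot H^1$-bound on $g_\infty$. The $L^2$ membership for $N\geq 5$ uses that $W\in L^2$ in this dimension range, $h^a_k\in\mathcal{S}(\R^N)$, and the persistence of $L^2$-regularity for the Cauchy problem in $L^2\cap\dot H^1$.

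\textbf{Main obstacle.} The delicate point is closing the contraction with the \emph{linear} potential term $Vg$: since $V$ has only polynomial spatial decay, its contribution on each unit-length time interval is merely bounded by a constant (not a small constant), and the gain comes only from the weighted summation provided by Lemma \ref{summation}. This forces the weight exponent $k+\tfrac12$ to lie strictly between two consecutive eigenvalues $k e_0$ and $(k+1)e_0$ of the relevant transfer, and is precisely what requires $k\geq k_0$ large and $t_0$ large. The nonlocal Hartree convolution complicates the estimates \eqref{linearestimate}--\eqref{nonlinearestimate:dual} compared with the local NLS case, but once these estimates are in hand the architecture is that of Duyckaerts--Merle.
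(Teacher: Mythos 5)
Your overall architecture agrees with the paper's: both set $g=U^a-U^a_k$ (the paper calls it $e$), both close a contraction in a weighted Strichartz space with exponential weight $e^{(k+\frac12)e_0 t}$ using Lemma \ref{linearoperator:prelimestimate} and the summation Lemma \ref{summation}, and both obtain uniqueness and $k$-independence from the contraction and the tail estimates. The essential difference of route is that the paper works directly on the half-line $[t_0,+\infty)$, defining the fixed point map as an improper integral
\[
\MMM_k(e)(t)=-\int_t^{+\infty}e^{i(t-s)\Delta}\Big(iVe(s)-R(h^a_k(s)+e(s))+R(h^a_k(s))+\eps_k(s)\Big)\,ds,
\]
which converges because everything in the integrand decays at rate at least $e^{-(k+\frac12)e_0 s}$. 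You instead truncate at $T$, solve backwards from $g(T)=0$, and pass to the limit $T_n\to\infty$. That is a legitimate alternative in principle, but it adds a limiting step which, as you have written it, has a genuine gap: boundedness of $g_{T_n}(t_0)$ in $\dot H^1_{\mathrm{rad}}$ only gives weak $\dot H^1$ convergence along a subsequence (plus strong local convergence in lower Lebesgue norms by compactness), whereas Theorem \ref{T:local}(c) requires strong $\dot H^1$ convergence of the initial data to propagate convergence of solutions. To make the limiting argument work you would need to show the family $\{g_{T_n}\}$ is actually Cauchy in $l([t_0,T_n])$ (for instance by running the same contraction estimate on the difference $g_{T_m}-g_{T_n}$ for $T_m>T_n$, which satisfies the linearized equation with a source supported near $T_n$), rather than merely bounded. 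The paper's integral-to-infinity formulation sidesteps this entirely.

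A second, more serious gap concerns the $L^2$ membership for $N\geq 5$. You assert it follows from $W\in L^2$, $h^a_k\in\mathcal S$, and ``persistence of $L^2$-regularity for the Cauchy problem in $L^2\cap\dot H^1$.'' But persistence only applies once you know $U^a(t_1)\in L^2$ at some finite time $t_1$, and nothing in your construction controls the $L^2$ norm of the perturbation $g$: the space $l(I)$ does not see $L^2_x$. The paper handles this by a localized mass argument: it defines $F_R(t)=\int |U^a(t)|^2\psi(x/R)\,dx$, bounds $|F_R'(t)|\leq Ce^{-e_0 t}$ uniformly in $R$ using the Hardy inequality and the $\dot H^1$ proximity of $U^a(t)$ to $W$, integrates from $t$ to $+\infty$, and sends $R\to+\infty$ to conclude $\|U^a(t)\|_{L^2}=\|W\|_{L^2}$. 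If you want to stick with your finite-horizon construction, you could instead observe that $U^a_{T_n}(T_n)=W+h^a_k(T_n)\in L^2$ with $\|U^a_{T_n}(T_n)\|_{L^2}\to\|W\|_{L^2}$, use mass conservation to bound $\|U^a_{T_n}(t)\|_{L^2}$ uniformly, and pass to a weak $L^2$ limit; but as written, your sentence does not establish the claim.

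Finally, your ``main obstacle'' paragraph overstates the difficulty with $Vg$: by \eqref{linearestimate} the contribution of $Vg$ on a short interval of length $\tau_0$ carries the factor $|I|^{(p-1)/p}=\tau_0^{(p-1)/p}$, so it \emph{is} small for $\tau_0$ small, and the contraction on each short interval does not rely on the weight. What the weight and Lemma \ref{summation} buy you is the summation across infinitely many short intervals; and the reason the weight exponent must be $(k+\frac12)e_0$ (strictly between $ke_0$ and $(k+1)e_0$, as you correctly say) is to place it below the decay rate $(k+1)e_0$ of the error $\eps_k$ while still strictly dominating the main Duhamel contribution. Apart from these issues the plan is sound and, once the compactness step and the $L^2$ step are repaired, yields the same conclusion as the paper.
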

	
	\begin{proof}[Sketch of proof]
		The proof depends on the fixed point argument and Strichartz estimate in Lemma \ref{Striestimate}. Let $e
		:=U^a-U^a_k=h^a- h^a_k$ and it satisfies
		$$\partial_t e +\LLL
		e = R(h^a_k+e )- R(h^a_k)-\eps_k,$$ 
		where $h^a:=U^a-W,\; h^a_k:=U^a_k-W,\;\eps_k$ is defined in Remark \ref{epsilon}. This equation may be rewritten as
		\begin{align*}
			\label{solve:h}  i\partial_t e+\Delta e =& V e+  i R(h^a_k+e)-i
			R(h^a_k)-i\eps_k.
		\end{align*}
		
		Let us define the mapping
		\begin{equation*}
			\aligned\MMM_k(e)(t):=-\int_t^{+\infty}e^{i(t-s)\Delta} \Big(i
			Ve(s)-R\big(h^a_k(s)+e(s)\big)+R(h^a_k(s))+ \eps_k(s)\Big) ds.
			\endaligned
		\end{equation*}
		Thus the existence of a solution $U^a$ of \eqref{har} satisfying
		\eqref{difference:Stri} for $t\geq t_0$ is equivalent to the
		following fixed point problem
		\begin{equation}
			\label{defM} \aligned
			\forall \; t\geq t_0,\quad
			e(t)=\MMM_k(e)(t)\text{ and } \|  e \|_{l(t,+\infty)}  \leq
			e^{-\left(k+\frac{1}{2}\right)e_0 t}.
			\endaligned
		\end{equation}
		
		Let us fix $k$ and $t_0$. Denote
		\begin{align*}
			B_{l}^k&:=\big\{e\in Z(t_0,+\infty),\;\nabla e\in S(t_0,+\infty);\;
			\sup_{t\geq t_0} e^{\left(k+\frac{1}{2}\right)e_0 t}  \| e \|_{l(t,+\infty)} \leq 1\big\}.
		\end{align*}
		By using the Strichartz estimate and Lemmar \ref{linearoperator:prelimestimate}, we can show that if $t_0$ and $k$ are large enough, the
		mapping $\MMM_k$ is a contraction map on $B_l^k$.
		
		Finally, by the finite propagation of finite energy solutions, we can obtain that the analogue estimate $U^a\in L^2(\R^N)$ if $N\geq 5$ from the derivative estimate of the localized  mass to that in \cite{DuyMerle:NLS:ThresholdSolution}. For this purpose, define a positive radial function $\psi$ on $\R^N$,such that $\psi=1$ if $|x|\leq1$ and $\psi=0$ if $|x|\geq2$. We also define that for $R>0$ and large $t$,
		$$F_R(t):=\int_{\R^N}|U^a(t,x)|^2\psi(\frac{x}{R})dx.$$
		Then, we have
		\begin{align*}
			F_R^{'}(t)&=\frac{2}{R}Im\int U^a\nabla \bar{U}^a\cdot (\nabla \psi)(\frac{x}{R})dx
			=\frac{2}{R}Im\int W\nabla (\bar{U}^a-W)\cdot (\nabla \psi)(\frac{x}{R})dx\\
			&+\frac{2}{R}Im\int (U^a-W)\nabla W\cdot (\nabla \psi)(\frac{x}{R})dx
			+\frac{2}{R}Im\int (U^a-W)\nabla (\bar{U}^a-\bar{W})\cdot (\nabla \psi)(\frac{x}{R})dx
		\end{align*}
		Using(3.5), $\|U^a(t)-W\|_{\dot{H}^1}\leq Ce^{-e_0t}$ and the Hardy inequality, we get
		$$|F_R^{'}(t)|\leq C\|U^a(t)-W\|_{\dot{H}^1}(\|U^a(t)\|_{\dot{H}^1}+\|W\|_{\dot{H}^1})\leq Ce^{-e_0t},$$
		with a constant C independent od R, and thus, integrating between sufficiently large t and $+\infty$,
		$$\left|F_R(t)-\int_{\R^N}|W(x)|^2\psi(\frac{x}{R})dx\right|\leq Ce^{-e_0t}.$$
		Letting R goes to $+\infty$, we get $\|U^a(t)\|_{L^2}=\|W\|_{L^2}$ and $U^a(t)\in L^2(\R^N)$ when $N\geq5$, which completes the proof from mass conservation law.
	\end{proof}

	\subsection{Construction of	$W^{\pm}$.}

	Note that by \eqref{difference:Energy}, and the energy conservation,
	we have
	$$E(U^a)=E(W).$$
	In addition, by \eqref{difference:Energy}, we have
	\begin{equation*}
		\aligned \big\|\nabla U^a(t) \big\|^2_{2}= \big\|\nabla W \big\|^2_2
		+ 2a e^{-e_0 t}\int_{\R^N} \nabla W \cdot \nabla \mathcal{Y}_1 \;dx+
		O\left(e^{-\frac32e_0t}\right)\quad \text{as} \; t\rightarrow
		+\infty.
		\endaligned
	\end{equation*}
	By \eqref{WYrelation}, replacing $\mathcal{Y}_+$ by $-\mathcal{Y}_+$
	if necessary, we may assume that
	\begin{equation*}
		\aligned \int \nabla W \cdot \nabla \mathcal{Y}_1  \; dx > 0,
		\endaligned
	\end{equation*}
	which implies that $\big\|\nabla U^a(t) \big\|^2_{2} - \big\|\nabla
	W \big\|^2_2$ has the sign of $a$ for large positive time. Thus, by
	Lemma \ref{energytrapp}, $\big\|\nabla U^a (t_0) \big\|^2_{2} -
	\big\|\nabla W \big\|^2_2$ has the sign of $a$.

	Let
	\begin{equation}\label{Definition:threshold}
		\aligned   W^{+}(t,x)=  U^{+1}(t+t_0, x), \quad W^{-}(t,x)=
		U^{-1}(t+t_0, x),
		\endaligned
	\end{equation}
	which yields two radial solutions $W^{\pm}(t,x)$ of \eqref{har} for
	$t\geq 0$, and satisfies
	\begin{equation*}
		\aligned E(W^{\pm}(t))=E(W), \; \big\|\nabla W^-(0)\big\|_2 <
		\big\|\nabla W \big\|_2 , \; \big\|\nabla W^+(0)\big\|_2 >
		\big\|\nabla W \big\|_2,
		\endaligned
	\end{equation*}
	and
	\begin{equation*}
		\aligned  \big\|W^{\pm}(t)-W\big\|_{\dot H^1} \leq C e^{-e_0t}, \;\;
		t\geq 0.
		\endaligned
	\end{equation*}
	
	Moreover, we have $W^{\pm}\in L^2$ when $N\geq 5$.  To conclude the proof of Theorem \ref{threholdsolution}, it remains
	to show the dynamics of $W^{\pm}$ for the negative time, we will
	leave the proof in Section \ref{subs:blowup} and Section \ref{S:sub:scattering}. \qed

	%
	%
	%
	%
	
	\section{Modulation Analysis}\label{S:modulation}
	For the radial function $u\in \dot H^1(\R^N)$, we define the
	gradient variant from $W$ as
	\begin{equation*}
		\aligned \delta(u) = \left|\int_{\R^N} \Big( |\nabla u(x)|^2 -
		|\nabla W(x)|^2\Big) dx\right|.
		\endaligned
	\end{equation*}
	By Proposition \ref{P:static stability}, we know that if
	\begin{equation}\label{GSenergy}
		\aligned E(u)=E(W),
		\endaligned
	\end{equation}
	and $\delta(u)$ is small enough, then there exist
	$\widetilde{\theta}$ and $\widetilde{\mu}$ such that
	\begin{equation*}
		\aligned u_{\widetilde{\theta}, \widetilde{\mu}} = W +
		\widetilde{u}, \;\; \text{with}\;  \big\|\widetilde{u}\big\|_{\dot
			H^1} \leq \varepsilon(\delta(u)),
		\endaligned
	\end{equation*}
	where $u_{\widetilde{\theta}, \widetilde{\mu}}$ is defined as Proposition \ref{P:static stability}, and $\varepsilon(\delta) \rightarrow 0$ as $\delta\rightarrow 0$.
	The goal in this section is that for radial threshold solution of \eqref{har}, we
	choose parameters $\widetilde{\theta}$ and $\widetilde{\mu}$ to
	obtain the dynamics of these parameters and their
	derivatives in terms of the gradient variant $\delta(u)$.
	
	First, by the implicit theorem and the variational characterization
	of $W$ in Proposition \ref{P:static stability}, we have the
	following orthogonal decomposition, which is analogue to Lemma $4.1$ in \cite{MiaoWX:dynamic gHartree}.
	
	\begin{lemma}\label{choice:spatialtranslation}
		There exist  $\delta_0>0$ and a positive function $\epsilon(\delta)$
		defined for $0<\delta\leq \delta_0$, which tends to $0$ when $\delta
		$ tends to $0$, such that for all radial $u$ in $\dot H^1(\R^N)$
		satisfying \eqref{GSenergy}, there exists a couple $(\theta, \mu)\in
		\R\times (0, +\infty)$   such that $v=u_{\theta,\mu}$ satisfies
		\begin{equation}
			\aligned v\bot iW, \quad v \bot \widetilde{W}.
			\endaligned
		\end{equation}
		The parameters $(\theta, \mu)$ are unique in $\R/2\pi\Z \times \R,
		$ and the mapping $u\mapsto (\theta, \mu)$ is $C^1$.
	\end{lemma}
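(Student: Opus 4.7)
The plan is to apply the implicit function theorem at the point $(W, 0, 1)$ to the $C^1$ map
\begin{equation*}
F(u, \theta, \mu) := \bigl( (u_{\theta, \mu}, iW)_{\dot H^1},\; (u_{\theta, \mu}, \widetilde{W})_{\dot H^1} \bigr),
\end{equation*}
defined for $u \in \dot H^1_{\mathrm{rad}}(\R^N)$, $\theta \in \R$, $\mu > 0$, with the real inner product $(f, g)_{\dot H^1} := \Re \int_{\R^N} \nabla f \cdot \overline{\nabla g}\, dx$. The orthogonality conditions in the conclusion are precisely $F(u, \theta, \mu) = 0$. First I would verify $F(W, 0, 1) = 0$: the first component $(W, iW)_{\dot H^1}$ vanishes because $W$ is real while $iW$ is purely imaginary, and the second component $(W, \widetilde W)_{\dot H^1}$ vanishes by differentiating the scaling invariance $\|W_{0, \mu}\|_{\dot H^1} \equiv \|W\|_{\dot H^1}$ at $\mu = 1$, together with the identity $\partial_\mu W_{0,\mu}\big|_{\mu=1} = -\widetilde W$.

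Next I would compute the Jacobian $J = \partial_{(\theta, \mu)} F(W, 0, 1)$ using the tangent vectors $\partial_\theta W_{\theta,\mu}\big|_{(0,1)} = iW$ and $\partial_\mu W_{\theta,\mu}\big|_{(0,1)} = -\widetilde W$. Since $iW$ is imaginary and $\widetilde W$ is real, the cross-term $(iW, \widetilde W)_{\dot H^1}$ vanishes, so $J$ reduces to the diagonal matrix $\mathrm{diag}\bigl(\|W\|_{\dot H^1}^2,\, -\|\widetilde W\|_{\dot H^1}^2\bigr)$, which is invertible because $W \not\equiv 0$ and $\widetilde W \not\equiv 0$. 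The implicit function theorem then supplies an $\dot H^1$-neighborhood $\mathcal U$ of $W$ together with $C^1$ maps $v \mapsto (\theta(v), \mu(v))$ satisfying $\theta(W) = 0$, $\mu(W) = 1$, and $F(v, \theta(v), \mu(v)) = 0$ for every $v \in \mathcal U$, with local uniqueness in $(\theta, \mu)$ near $(0, 1)$.

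To finish, I would combine this local $C^1$ parametrization with Proposition \ref{P:static stability}. Choosing $\delta_0$ small enough, for any radial $u$ with $E(u) = E(W)$ and $\delta(u) \leq \delta_0$ the proposition yields preliminary parameters $(\tilde\theta, \tilde\mu)$ placing $u_{\tilde\theta, \tilde\mu} \in \mathcal U$; applying the local maps to $u_{\tilde\theta, \tilde\mu}$ and composing via the group law $(\theta_1, \mu_1) \cdot (\theta_2, \mu_2) = (\theta_1 + \theta_2, \mu_1 \mu_2)$ produces the desired $(\theta, \mu)$ depending $C^1$-ly on $u$, and the quantitative control $\|\widetilde u\|_{\dot H^1} \to 0$ as $\delta(u) \to 0$ follows from the continuity of the implicit maps together with Proposition \ref{P:static stability}. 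The main subtlety I expect concerns the global uniqueness in $\R/2\pi\Z \times (0, \infty)$: local uniqueness of the parameters is immediate from the implicit function theorem, while excluding a distant competing pair $(\theta', \mu')$ reduces, after shrinking $\delta_0$ if needed, to the observation that any orthogonal decomposition must again bring $u_{\theta', \mu'}$ into $\mathcal U$ by the variational characterization, whence the local uniqueness applies.
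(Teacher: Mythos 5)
Your proposal matches the paper's proof essentially step by step: verify $F(W,0,1)=0$ using the reality of $W$ and scaling invariance, compute the diagonal Jacobian $\mathrm{diag}(\|W\|_{\dot H^1}^2,\,-\|\widetilde W\|_{\dot H^1}^2)$ via the tangent vectors $iW$ and $-\widetilde W$, invoke the implicit function theorem, and compose with the preliminary modulation parameters coming from Proposition \ref{P:static stability} via the group law. The one place where you go slightly beyond the paper is the closing remark on why uniqueness holds in $\R/2\pi\Z\times(0,\infty)$ rather than just locally; the paper leaves this implicit, and your sketch (any competing orthogonal pair must again land in the IFT neighborhood by the variational characterization) is the right way to fill it in.
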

	\begin{proof} First by Proposition \ref{P:static stability}, there
		exist a function $\epsilon $ and $\theta_1, \mu_1>0$ such that
		$u_{\theta_1, \mu_1}=W+g$ and
		\begin{equation}\label{est:variational}
			\big\| u_{\theta_1, \mu_1} -W\big\|_{\dot H^1} \leq \epsilon \left(
			\delta(u)\right).
		\end{equation}
		
		Now consider the functional
		\begin{align*}
			J(\theta, \mu, f)=&  \left(J_0(\theta, \mu, f), J_1(\theta, \mu, f)
			\right) = \left( (f_{\theta, \mu}, iW)_{\dot H^1}, (f_{\theta,\mu},
			\widetilde{W})_{\dot H^1}\right)\\
			= & \left( (e^{i\theta}\mu^{-\frac{N-2}{2}} f(x/\mu), iW)_{\dot
				H^1}, (e^{i\theta}\mu^{-\frac{N-2}{2}} f(x/\mu),
			\widetilde{W})_{\dot H^1}\right).
		\end{align*}
		Hence, by simple computations, we have
		\begin{align*}
			J(0,1, W)=0, \quad \left| \frac{\partial J}{\partial(\theta, \mu)}
			(0,1, W)\right| = \left| \begin{array}{cc} \big\|\nabla W \big\|^2_{L^2} & 0\\
				0 &  - \big\|\nabla \widetilde{W} \big\|^2_{L^2} \end{array} \right|
			\not = 0.
		\end{align*}
		Thus by the implicit theorem, there exist $\epsilon_0, \eta_0 > 0$,
		we have that for any $h\in \dot H^1(\R^N)$ with $\big\| h- W \big\|_{\dot
			H^1} < \epsilon_0$, there exists a unique
		$\left(\widetilde{\theta}(h), \widetilde{\mu}(h)\right) \in C^1$
		such that $|\widetilde{\theta}| + |\widetilde{\mu}-1 |< \eta_0 $ and
		\begin{align*}
			J(\widetilde{\theta}, \widetilde{\mu}, h)=0.
		\end{align*}
		By \eqref{est:variational}, this implies that there exists a unique
		$\left(\widetilde{\theta}_1(u), \widetilde{\mu}_1(u)\right)$ such
		that
		\begin{align*}
			J\left(\widetilde{\theta}_1 + \theta_1, \widetilde{\mu}_1 \mu_1, u
			\right)= J\left(\widetilde{\theta}_1, \widetilde{\mu}_1,
			u_{\theta_1, \mu_1} \right) = 0.
		\end{align*}
		This completes the proof by taking $\theta =\widetilde{\theta}_1 +
		\theta_1$ and $\mu = \widetilde{\mu}_1 \mu_1$.
	\end{proof}
	
	Let $u(t)$ be a radial solution of \eqref{har} satisfying
	\eqref{GSenergy} and write
	\begin{equation}\label{def:delta}
		\aligned \delta(t) := \delta(u(t))= \left|\int_{\R^N}\Big( |\nabla
		u(t, x)|^2 - |\nabla W(x)|^2 \Big) dx\right|.
		\endaligned
	\end{equation}
	
	Let $D_{\delta_0}$ be the open set of all time $t$ in the domain of
	existence of $u$ such that  $\delta(t)< \delta_0$. On
	$D_{\delta_0}$, by Lemma \ref{choice:spatialtranslation}, there
	exist $\theta(t)$ and $\mu(t)$, which are $C^1$ functions of $t$
	such that we have the following decomposition
	\begin{equation}\label{modulcomp}
		\aligned   u_{\theta(t), \mu(t)}(t, & x ) = \Big( 1+ \alpha(t) \Big)
		W(x) + h(t,x),
		\endaligned
	\end{equation}
	\begin{equation*}
		\aligned  1+ \alpha(t) =& \frac{1}{\big\|W\big\|^2_{\dot
				H^1}}\left(u_{\theta(t), \mu(t)}, W\right)_{\dot H^1},\quad h \in
		H^{\bot}\cap \dot H^1_{\text{rad}}.
		\endaligned
	\end{equation*}
	We will make use of additional conditions
	to show that $u$ converges exponentially  to $W$ in the positive time in $\dot H^1(\R^N)$ up to
	the constant modulation parameters in Section \ref{S:convergence:sup} and Section
	\ref{S:convergence:sub}.  The reason why we make this decomposition is that the linearized energy $\Phi$ is coercive on $H^{\bot}\cap \dot H^1_{\text{rad}}$.

	\begin{lemma}\label{estmodulpara}Let $u$ be a radial
		solution of \eqref{har}
		satisfying \eqref{GSenergy}. Then taking a smaller $\delta_0$ if
		necessary, the following estimates hold for $t \in D_{\delta_0}$:
		\begin{align}\label{modulpara}
			\big| \alpha(t)\big| \approx \big\|   \alpha(t)W(\cdot) +
			h(t,\cdot)\big\|_{\dot H^1}\approx & \big\|h(t,\cdot)\big\|_{\dot
				H^1} \approx
			\delta(t) ,\\
			\label{modulparaderiva}  \big| \alpha'(t)\big| + \big|
			\theta'(t)\big| +\left|\frac{\mu'(t)}{\mu(t)}\right|\leq &\; C
			\mu^2(t)\delta(t).
		\end{align}
	\end{lemma}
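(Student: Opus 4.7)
My plan follows the approach in Lemma~2.8 of \cite{MiaoWX:dynamic gHartree} and proceeds in two parts. For the equivalence of sizes, I would exploit the orthogonality $(W, h)_{\dot H^1} = 0$ to get the Pythagorean identity $\|\nabla u_{\theta,\mu}\|_2^2 = (1+\alpha)^2\|\nabla W\|_2^2 + \|\nabla h\|_2^2$, which yields $\delta(t) = |(2\alpha+\alpha^2)\|\nabla W\|_2^2 + \|\nabla h\|_2^2|$. Combining energy conservation $E(u_{\theta,\mu}) = E(W)$ with a Taylor expansion of $E$ around the critical point $W$ gives $\Phi(\alpha W + h) + K(\alpha W + h) = 0$, where $K$ collects the cubic-and-higher terms and is controlled via Hardy--Littlewood--Sobolev. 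Using \eqref{Phi} together with $B(W,h)=0$ from \eqref{B-orthEnerHbot}, the quadratic form simplifies to
\begin{equation*}
\Phi(\alpha W + h) = -\alpha^2 \|\nabla W\|_2^2 + \Phi(h).
\end{equation*}
The coercivity of $\Phi$ on $H^\bot$ (Proposition~\ref{coerH}), together with absorption of the cubic remainder for $\delta_0$ small, then yields $|\alpha| \approx \|h\|_{\dot H^1}$; the formula for $\delta$ shows the leading term is $2|\alpha|\|\nabla W\|_2^2$, giving $|\alpha| \approx \delta$ and closing the first chain.

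For the derivative bounds in \eqref{modulparaderiva}, I would differentiate the three defining conditions for $(\alpha, \theta, \mu)$ in $t$ and use \eqref{har} via the chain-rule identity
\begin{equation*}
\partial_t u_{\theta,\mu} = i\theta'(t)\,u_{\theta,\mu} - \frac{\mu'(t)}{\mu(t)}\,\Lambda u_{\theta,\mu} + i\mu(t)^2\, N(u_{\theta,\mu}),
\end{equation*}
where $\Lambda = \tfrac{N-2}{2} + x\cdot\nabla$, $N(v) = \Delta v + (I_\lambda * |v|^p)|v|^{p-2}v$, and the $\mu^2$ factor encodes the scaling $t\mapsto \mu^2 t$ of the equation. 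Pairing this identity in $\dot H^1$ against $W, iW, \widetilde W$ and using the pairwise orthogonalities of $\{W, iW, \widetilde W\}$ together with $h\in H^\bot$ produces a $3\times 3$ linear system for $(\alpha', \theta', \mu'/\mu)$ whose coefficient matrix converges to the invertible diagonal matrix $\mathrm{diag}(\|W\|_{\dot H^1}^2,\, \|W\|_{\dot H^1}^2,\, -\|\widetilde W\|_{\dot H^1}^2)$ as $\delta \to 0$.

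To bound the right-hand side, I would linearize $N(u_{\theta,\mu}) = DN(W)[\tilde h] + O(\|\tilde h\|_{\dot H^1}^2)$ using $N(W)=0$, with $\tilde h := \alpha W + h$, and transfer derivatives onto the Schwartz test functions $W, iW, \widetilde W$ through integration by parts to obtain
\begin{equation*}
\big|(iN(u_{\theta,\mu}), \psi)_{\dot H^1}\big| \lesssim \|\tilde h\|_{\dot H^1} \approx \delta(t), \qquad \psi \in \{W, iW, \widetilde W\}.
\end{equation*}
Inverting the linear system then yields $|\alpha'| + |\theta'| + |\mu'/\mu| \leq C\mu^2 \delta$. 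The main technical hurdle I anticipate is showing that the off-diagonal entries of the coefficient matrix, such as $(\Lambda h, iW)_{\dot H^1}$ and $(\Lambda h, \widetilde W)_{\dot H^1}$, remain $O(\|h\|_{\dot H^1})$ so the matrix stays uniformly invertible; this relies on the anti-self-adjointness identity $(\Lambda f, g)_{\dot H^1} = -(f, \Lambda g)_{\dot H^1}$ combined with $h \in H^\bot$, while handling the nonlocal Riesz-potential contributions requires Lemma~\ref{L:hardy}.
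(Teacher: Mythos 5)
Your proposal is correct and takes essentially the same route as the paper, which for this lemma gives only a one-sentence sketch pointing to Lemma 4.2 of \cite{MiaoWX:dynamic gHartree}; your write-up fills in precisely the standard steps of that reference (Pythagorean identity from $h\perp W$, Taylor expansion of $E$ about the critical point $W$ combined with the coercivity of $\Phi$ on $H^\bot$, and the $3\times 3$ linear system obtained by differentiating the orthogonality conditions against $W$, $iW$, $\widetilde W$ using the scaling/phase chain rule). The minor cosmetic point is that $W$, $\widetilde W$ are not Schwartz (they decay only polynomially), but they decay fast enough for the integrations by parts you invoke to be justified, so the argument goes through.
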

	\begin{proof}[Sketch of the proof]
		The proof mainly uses the coercivity of the linearized energy $\Phi$ on $H^{\bot}\cap \dot H^1_{rad}$ in Proposition \ref{coerH} and the orthogonal decomposition in Lemma \ref{choice:spatialtranslation}, we can refer to Lemma 4.2 in \cite{MiaoWX:dynamic gHartree} for more details.
	\end{proof}

	%
	%
	%
	%
	
	\section{Convergence to $W$ for the supercritical threshold solution}\label{S:convergence:sup}
	
	In this section, we follows the argument in Section $5$ in \cite{MiaoWX:dynamic gHartree} to show the following result, which implies the
	dynamics of special threshold solution $W^+$ of Theorem \ref{threholdsolution} in the negative
	time and is also the first step in the proof of case (c) of Theorem
	\ref{classification}.
	
	\begin{proposition}\label{expdecay:supercase}
		Consider a radial solution $u \in H^1(\R^N)$ of \eqref{har} such
		that
		\begin{equation}\label{data:super}
			\aligned E(u)=E(W), \quad \big\| \nabla u_0\big\|_{L^2}>
			\big\|\nabla W\big\|_{L^2},
			\endaligned
		\end{equation}
		which is globally defined for the positive time. Then there exist
		$\theta_0\in \R/(2\pi\Z), \mu_0 \in (0, +\infty), c, C>0$ such that
		\begin{equation}\label{asymptotic:supercritical case}
			\aligned \forall \; t\geq 0, \quad \big\|u-W_{\theta_0,
				\mu_0}\big\|_{\dot H^1} \leq Ce^{-ct},
			\endaligned
		\end{equation}
		and the negative time of existence of $u$ is finite.
	\end{proposition}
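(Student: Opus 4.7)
My plan is to follow the Duyckaerts--Merle scheme, adapted to the nonlocal Hartree equation as in \cite{MiaoWX:dynamic gHartree}, in three logically ordered steps built around the spectral gap of Proposition \ref{spectral} and the coercivity of Proposition \ref{coerH}. First, Lemma \ref{energytrapp}(c) forces $\|\nabla u(t)\|_2>\|\nabla W\|_2$ on the whole maximal interval, so $\delta(t)>0$ throughout. Eliminating the nonlinearity via $E(u)=E(W)$ and \eqref{KinEnergy}, the localized Virial identity of Lemma \ref{L:local virial} rewrites as
\begin{equation*}
  V_R''(t) \;=\; -4(p-1)\,\delta(t) + A_R(u(t)).
\end{equation*}
Since $u_0\in H^1(\R^N)$ gives $u(t)\in L^2$ by mass conservation, $V_R(t)$ is uniformly bounded on $[0,\infty)$ and $|V_R'(t)|\lesssim R$; radiality plus the uniform $\dot H^1$-bound control $A_R$ for $R$ large. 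A double integration then forces $\liminf_{t\to+\infty}\delta(t)=0$, and Lemma \ref{choice:spatialtranslation} extracts a sequence $t_n\to+\infty$ together with modulation parameters $(\theta_n,\mu_n)$ such that $u_{\theta_n,\mu_n}(t_n)\to W$ in $\dot H^1$.

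The technical heart of the argument, and the main obstacle, is the upgrade of $\delta(t_n)\to 0$ to the pointwise exponential bound $\delta(t)\leq Ce^{-ct}$. On the set $D_{\delta_0}$ where $\delta(t)<\delta_0$, use the decomposition \eqref{modulcomp} to write $u_{\theta(t),\mu(t)}=(1+\alpha(t))W+h(t)$ with $h\in H^\perp\cap\dot H^1_{\mathrm{rad}}$, and project $h$ onto the eigenmodes of Proposition \ref{spectral}(a) by defining
\begin{equation*}
  a_\pm(t) \;:=\; B\bigl(h(t),\mathcal{Y}_\mp\bigr),
\end{equation*}
which is meaningful thanks to \eqref{B-orthEigen}. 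The equation $\partial_t h+\mathcal{L} h=-iR(h)$, augmented by modulation-derivative terms bounded via Lemma \ref{estmodulpara}, together with the adjoint relation $B(\mathcal{L}f,g)=-B(f,\mathcal{L}g)$ and $\mathcal{L}\mathcal{Y}_\pm=\pm e_0\mathcal{Y}_\pm$, yields the ODE system
\begin{equation*}
  a_\pm'(t) \;=\; \pm e_0\,a_\pm(t) + O\bigl(\mu^2(t)\,\delta(t)^2\bigr).
\end{equation*}
Expanding $E(u)=E(W)$ around $W$ gives $\Phi(h)=O(\|h\|_{\dot H^1}^3)$ up to controlled $\alpha$-corrections, so the coercivity of Proposition \ref{spectral}(b) on $G_\perp$ delivers $\|h(t)\|_{\dot H^1}^2\lesssim a_+(t)^2+a_-(t)^2$. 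Gronwall controls the stable mode $a_-$; the delicate point is the unstable mode $a_+$, which in isolation would grow like $e^{+e_0 t}$. However, such growth is incompatible with Step~1 (which produces arbitrarily small $\delta$ arbitrarily late in time), and an integral-summation bootstrap using Lemma \ref{summation} forces $|a_+(t)|\leq Ce^{-e_0 t}$. This yields $\delta(t)\leq Ce^{-ct}$ for all $t\geq 0$, and by bootstrapping the ODE via Lemma \ref{linearstrich} also $\|h(t)\|_{\dot H^1}\leq Ce^{-ct}$.

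Given the exponential decay of $\delta$, Lemma \ref{estmodulpara} yields $|\theta'(t)|+|\mu'(t)/\mu(t)|\leq C\mu^2(t)\,\delta(t)$, and the forward trapping keeps $\mu(t)$ in a compact subset of $(0,\infty)$; integrating these exponentially integrable bounds produces limits $\theta(t)\to\theta_0$ and $\mu(t)\to\mu_0$ with exponential rates, which combined with $\|h(t)\|_{\dot H^1}\lesssim\delta(t)$ gives \eqref{asymptotic:supercritical case}. Finally, for $T_-(u)<\infty$: if instead $T_-=-\infty$, the uniform bound $|V_R'(t)|\lesssim R$ combined with $V_R''(t)\leq -c\,\delta(t)+A_R$ forces $\int_{-\infty}^0\delta(s)\,ds<\infty$, so there is a sequence $s_n\to-\infty$ with $\delta(s_n)\to 0$. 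Applying the Step~2 analysis to the time-reversed solution $v(t):=\overline{u(-t)}$ then yields exponential convergence of $u$ to some $W_{\theta_1,\mu_1}$ as $t\to-\infty$; combined with the forward convergence and the rigidity/uniqueness for solutions bi-asymptotic to ground states to be established in Section \ref{S:uniqueness}, this is incompatible with $\|\nabla u_0\|_2>\|\nabla W\|_2$, so $T_-(u)<\infty$.
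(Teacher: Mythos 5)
Your plan diverges from the paper's proof in an essential way, and the divergences introduce gaps. The paper does not project onto the eigenmodes $\mathcal{Y}_\pm$ in this proposition at all; the spectral structure of Proposition \ref{spectral} is used only later, in the uniqueness arguments of Section \ref{S:uniqueness}. Here the exponential decay is produced entirely by the local Virial monotonicity. Lemma \ref{L:secderiv} shows $\partial_t^2 V_R(t)\leq -2(p-1)\delta(t)<0$ for $R\geq R_1$ and, by a convexity argument ($V_R\geq 0$, strictly concave, defined on $[0,\infty)$), deduces $\partial_t V_R(t)>0$. Combined with the refined bound $|\partial_t V_R(t)|\leq CR^2\delta(t)$ of Lemma \ref{L:firder}, integrating in $t$ gives
\begin{equation*}
  2(p-1)\int_t^T\delta(s)\,ds\leq \partial_tV_R(t)-\partial_tV_R(T)\leq CR^2\delta(t),
\end{equation*}
hence $\int_t^{\infty}\delta(s)\,ds\leq C\delta(t)$, and Gronwall yields $\int_t^{\infty}\delta(s)\,ds\leq Ce^{-ct}$ (Lemma \ref{L:inteexpondecay}). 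This integrated exponential decay is the engine of the proof and is far stronger than the $\liminf_{t\to+\infty}\delta(t)=0$ that your Step~1 "double integration" extracts. Once $\delta$ is in $L^1$ with exponential tail, the modulation estimate $|\alpha'(t)|\lesssim\mu^2(t)\delta(t)$ of Lemma \ref{estmodulpara} and $\delta(t)\approx|\alpha(t)|$ upgrade it to pointwise decay: $\delta(t)\approx|\alpha(t)-\alpha(\infty)|\leq C\int_t^{\infty}\mu^2(s)\delta(s)\,ds\leq Ce^{-ct}$.

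Your Step~2, the projection ODE $a_\pm'(t)=\pm e_0 a_\pm(t)+O(\mu^2\delta^2)$, is not adequately set up. The modulated decomposition \eqref{modulcomp} and hence the coefficients $a_\pm$ exist only on $D_{\delta_0}$, the set where $\delta<\delta_0$; from $\liminf\delta=0$ alone you cannot assert $[T_0,\infty)\subset D_{\delta_0}$, so the ODE is not even defined uniformly in forward time. Moreover the claimed control of the unstable mode $a_+$ is circular as stated: to show the nonlinear source is $O(\delta^2)$ and to close the bootstrap you need a pointwise a priori bound on $\delta$, which is exactly what you are trying to produce from $a_\pm$. The vague appeal to "incompatibility with Step~1" plus Lemma \ref{summation} does not fill this gap, since Lemma \ref{summation} needs an exponential bound on a finite window as hypothesis. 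The paper's Virial argument avoids all of this.

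Finally, your negative-time step is circular. You appeal to the rigidity/uniqueness of Section \ref{S:uniqueness}, but Proposition \ref{uniqueness} there takes as hypothesis precisely the exponential convergence $\|u-W\|_{\dot H^1}\leq Ce^{-ct}$ that the present proposition is being used to establish, and Theorem \ref{classification}(c) (including the blowup of $W^+$) is logically downstream. The paper's argument is self-contained: applying Lemmas \ref{L:firder}--\ref{L:inteexpondecay} both to $u$ and to $\overline{u(-t)}$ gives $\delta(t)\to 0$ and $\partial_tV_R(t)\to 0$ as $t\to\pm\infty$, while $\partial_t^2V_R<0$ on $\R$ makes $\partial_tV_R$ strictly decreasing; these force $\partial_tV_R\equiv 0$, contradicting $\partial_tV_R>0$. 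No uniqueness input is needed.
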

	
	\subsection{Exponential convergence of the gradient variant}
	
	\begin{lemma}[\cite{MiaoWX:dynamic gHartree}\label{L:firder}] Under the assumptions of Proposition
		\ref{expdecay:supercase}, then there exists $C$ such that for any
		$R>0$, and all $t\geq 0$,
		\begin{align}
			\label{Fderiv}   \big| \partial_t V_R(t) \big| \leq   CR^2 \;
			\delta(t),
		\end{align}
		where $V_R(t)$ is defined as in \eqref{localV}.
	\end{lemma}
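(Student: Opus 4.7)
The plan is to start from the local virial identity of Lemma~\ref{L:local virial},
\begin{equation*}
\partial_t V_R(t) = 2\,\Im \int_{\R^N} \bar u\,\nabla u\cdot \nabla\phi_R\,dx,
\end{equation*}
together with the elementary bounds $\|\nabla\phi_R\|_{L^\infty}\lesssim R$ and $\|\Delta\phi_R\|_{L^\infty}\lesssim 1$, both supported in $\{|x|\leq 2R\}$. I would then split the argument according to whether $\delta(t)$ is above or below the threshold $\delta_0$ of Lemma~\ref{choice:spatialtranslation} and treat the two regimes separately.

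When $\delta(t)\geq\delta_0$, I would argue directly without any decomposition. Cauchy--Schwarz and the localised Hardy inequality $\|u\|_{L^2(|x|\leq 2R)}\lesssim R\|\nabla u\|_{L^2}$ give
\begin{equation*}
|\partial_t V_R(t)|\lesssim R\int_{|x|\leq 2R}|u||\nabla u|\,dx\lesssim R^2\|\nabla u(t)\|_{L^2}^2,
\end{equation*}
and case~(c) of Lemma~\ref{energytrapp} yields $\|\nabla u(t)\|_{L^2}^2=\|\nabla W\|_{L^2}^2+\delta(t)\leq(1+\|\nabla W\|_{L^2}^2/\delta_0)\,\delta(t)$, closing this regime.

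When $\delta(t)<\delta_0$, I would invoke the modulational decomposition \eqref{modulcomp}. Writing $u(t,y)=e^{-i\theta(t)}\mu(t)^{(N-2)/2}v(t,\mu(t)y)$ with $v=(1+\alpha)W+h$, $\alpha\in\R$ and $h\in H^{\bot}\cap\dot H^1_{\rm rad}$ satisfying $|\alpha|+\|h\|_{\dot H^1}\lesssim\delta(t)$ by \eqref{modulpara}, the change of variable $z=\mu(t)y$ yields
\begin{equation*}
\partial_t V_R(t) = \frac{2R}{\mu(t)}\,\Im\int \bar v(z)\,\nabla v(z)\cdot(\nabla\phi)\!\left(\tfrac{z}{R\mu(t)}\right) dz.
\end{equation*}
The key observation is that since $W$ and $\alpha$ are real, the leading contribution $(1+\alpha)^2 W\nabla W$ of $\bar v\nabla v$ is real and drops out of the imaginary part, leaving
\begin{equation*}
\Im(\bar v\nabla v)=(1+\alpha)(W\nabla h_2-h_2\nabla W)+(h_1\nabla h_2-h_2\nabla h_1),
\end{equation*}
with $h=h_1+ih_2$. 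An integration by parts to transfer the derivative off $h_2$ in the first term, followed by Cauchy--Schwarz and Hardy on $\{|z|\leq 2R\mu(t)\}$, bounds each summand by $CR\mu(t)(\|h\|_{\dot H^1}+\|h\|_{\dot H^1}^2)$; the prefactor $2R/\mu(t)$ then gives $|\partial_t V_R(t)|\lesssim R^2(\|h\|_{\dot H^1}+\|h\|_{\dot H^1}^2)\lesssim R^2\|h\|_{\dot H^1}\lesssim R^2\delta(t)$ after absorbing the quadratic term via $\|h\|_{\dot H^1}\lesssim\delta_0\ll 1$.

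The main subtlety I anticipate is the scale-invariant cancellation of the $\mu$-factors in the second regime: the prefactor $1/\mu(t)$ coming from the change of variables has to combine exactly with the localisation radius $R\mu(t)$ produced by Hardy on the ball, so that the final constant depends only on $\|\nabla W\|_{L^2}$ and $N$ and not on $\mu(t)$. Apart from this bookkeeping, no analytic input beyond the local virial identity, the modulation bounds of Lemma~\ref{estmodulpara}, the energy trapping of Lemma~\ref{energytrapp}, Cauchy--Schwarz and Hardy should be needed.
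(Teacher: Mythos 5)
Your argument is correct and follows the route the paper indicates but omits: a case split on whether $\delta(t)$ is above or below the modulation threshold $\delta_0$, using Lemma~\ref{L:local virial} with energy trapping (Lemma~\ref{energytrapp}(c)) for large $\delta$, and the modulation decomposition \eqref{modulcomp}--\eqref{modulpara} together with the cancellation of the real $W\nabla W$ contribution in $\Im(\bar v\nabla v)$ for small $\delta$. One minor economy: the integration by parts in the first summand is not strictly needed, since a direct Cauchy--Schwarz with the localized Hardy bound $\|W\|_{L^2(|z|\leq 2R\mu(t))}\lesssim R\mu(t)\|\nabla W\|_{L^2}$ already produces the required factor $R\mu(t)$.
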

	\begin{proof}By Lemma \ref{localV}, Lemma \ref{energytrapp} and Lemma \ref{estmodulpara},  we can obtain the result for both large $\delta(t)$ and small $\delta(t)$, which is similar to Lemma $5.2$ in \cite{MiaoWX:dynamic gHartree}. We omit the details here.
	\end{proof}
	
	
	\begin{lemma}\label{L:secderiv} Under the assumptions of Proposition
		\ref{expdecay:supercase},  there exist $C>0$ and $R_1\geq 1$
		such that for $R\geq R_1$, and all $t\geq 0$,
		\begin{align}
			\label{Secderiv} \partial^2_t V_R(t) \leq & - 2(p-1) \delta(t),\\
			\label{fderiv:positive}
			\partial_t V_R(t) >&  \; 0.
		\end{align}
	\end{lemma}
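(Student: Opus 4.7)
My plan is to extract $-4(p-1)\delta(t)$ as the principal contribution via a Pohozaev-type cancellation and then absorb the Virial remainder $A_R$ into it by combining mass conservation, radial Sobolev decay, and a splitting based on the size of $\delta(t)$.

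For the cancellation, I would use $E(u(t))=E(W)$ together with the pairing identity $\int(I_\lambda*|u|^p)|u|^p\,dx = p\|\nabla u(t)\|_{L^2}^2 - 2pE(u(t))$ and the relation $8pE(W) = 4(p-1)\|\nabla W\|_{L^2}^2$ from \eqref{KinEnergy} to rewrite the main Virial part as
\[
4\|\nabla u(t)\|_{L^2}^2 - 4\int(I_\lambda*|u|^p)|u|^p\,dx = -4(p-1)\bigl(\|\nabla u(t)\|_{L^2}^2-\|\nabla W\|_{L^2}^2\bigr) = -4(p-1)\delta(t),
\]
where the sign is fixed by the supercritical trapping of Lemma~\ref{energytrapp}(c). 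Thus \eqref{Secderiv} reduces to showing $|A_R(u(t))| \leq 2(p-1)\delta(t)$ for all $t\geq 0$ and all sufficiently large $R$.

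I would control the three pieces of $A_R$ separately. The first, $\int(4\phi''(|x|/R)-4)|\nabla u|^2\,dx$, is nonpositive thanks to $\phi''\leq 1$ and may be dropped. The second is bounded by $CR^{-2}\|u_0\|_{L^2}^2$ via mass conservation (the hypothesis $u\in H^1$) together with the size and support of $\Delta^2\phi_R$. The third, the Hartree tail, I would estimate by sharp HLS together with the radial Strauss embedding $|u(t,x)|\lesssim|x|^{-(N-1)/2}$ (uniform in $t$ by the uniform $H^1$ bound from trapping and conservation), yielding uniform decay $\|u(t)\|_{L^{2N/(N-2)}(|x|\geq R)}\to 0$ as $R\to\infty$. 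Altogether $|A_R(u(t))|\leq\omega(R)$ uniformly in $t$, with $\omega(R)\to 0$. To upgrade this to $|A_R|\leq 2(p-1)\delta(t)$ I would split on $\delta(t)$: fix a small $\eta_0>0$; for $\delta(t)\geq\eta_0$ take $R_1$ large so that $\omega(R_1)\leq 2(p-1)\eta_0$; for $\delta(t)<\eta_0$ invoke the modulation decomposition \eqref{modulcomp} with Lemma~\ref{estmodulpara}, Taylor-expanding the three pieces of $A_R$ around the modulated ground state and using an a priori bounded range for $\mu(t)$ (which comes from the conservation laws and modulational monotonicity) to obtain a refined bound $|A_R(u(t))|\leq C\delta(t)$, which upon choosing $\eta_0$ small is absorbed into $2(p-1)\delta(t)$.

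Finally, for \eqref{fderiv:positive}: since $\partial_t^2V_R(t)\leq -2(p-1)\delta(t)<0$ (strictness from $\delta(t)>0$ in the supercritical regime), $\partial_tV_R$ is strictly decreasing, and if $\partial_tV_R(t^*)\leq 0$ for some $t^*\geq 0$, integrating past $t^*$ would force $V_R(t)\to -\infty$, contradicting the uniform bound $0\leq V_R(t)\leq R^2\|u_0\|_{L^2}^2$ from mass conservation, so $\partial_tV_R(t)>0$ on $[0,+\infty)$. The principal obstacle is the small-$\delta(t)$ regime: the uniform Strauss bound $\omega(R)$ cannot beat an arbitrarily small $\delta(t)$, so the error in $A_R$ must genuinely be shown to vanish linearly with $\delta(t)$, which forces the modulation route and hinges on an a priori control of the scale $\mu(t)$.
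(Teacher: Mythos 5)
Your overall strategy is the same as the paper's: extract $-4(p-1)\delta(t)$ from the main Virial part using $E(u)=E(W)$ and the supercritical trapping, reduce to $|A_R(u(t))|\leq 2(p-1)\delta(t)$, split on the size of $\delta(t)$, invoke the modulation decomposition together with a lower bound on $\mu(t)$ from mass conservation when $\delta(t)$ is small, and obtain the strict sign of $\partial_t V_R$ from concavity and nonnegativity of $V_R$.

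There is, however, a real gap in your large-$\delta$ step. You assert $|A_R(u(t))|\leq\omega(R)$ uniformly in $t$ via a ``uniform $H^1$ bound from trapping and conservation,'' but in the supercritical regime Lemma~\ref{energytrapp}(c) only pins $\|\nabla u(t)\|_{L^2}$ from \emph{below}, and the conserved mass and energy do not give an a priori upper bound on $\|\nabla u(t)\|_{L^2}$: at fixed $E(u)=E(W)$ the nonlocal potential term can compensate an arbitrarily large kinetic energy. Since both the radial Strauss estimate and the HLS tail estimate carry explicit powers of $\|\nabla u(t)\|_{L^2}$, the quantity you call $\omega(R)$ is not uniform in $t$, and the ``take $R_1$ large so that $\omega(R_1)\leq 2(p-1)\eta_0$'' step is unjustified. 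The paper avoids this: its Step~1 keeps the dependence explicit, proving $A_R(u(t))\lesssim R^{-2}+R^{-p(N-1)/N}\|\nabla u(t)\|_{L^2}^{p(N+1)/N}$, and Step~3 then substitutes $\|\nabla u(t)\|_{L^2}^2=\delta(t)+\|\nabla W\|_{L^2}^2$ (using the sign information from trapping) and compares the resulting function of $\delta$ against $2(p-1)\delta$ when choosing $R$. Replacing your claimed uniform $\omega(R)$ with this $\delta$-explicit form of the tail estimate would bring your argument in line with the paper's; the rest of your proposal, including the modulation treatment of small $\delta$ and the convexity argument for \eqref{fderiv:positive}, matches.
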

	\begin{proof}
		By Lemma \ref{L:local virial}, we have
		\begin{align*}
			\partial^2_t V_R(t)=&\; 4 \int_{\R^N} \Big|\nabla u (t,x)\Big|^2 dx -4
			\int_{\R^N}  \left(\ I_{\lambda}*|u|^p\right)\; |u(x)|^p \; dx
			+ A_R\big(u(t)\big),
		\end{align*}
		where $ A_R\big(u(t)\big)$ is defined in Lemma \ref{L:local virial}.
		
		By $E(u)=E(W) = \frac{p-1}{2p}\big\|\nabla W \big\|^2_{L^2}$, we have
		\begin{align*}
			4 \int_{\R^N} \Big|\nabla u (t,x)\Big|^2 dx -4
			\int_{\R^N}  \left(\ I_{\lambda}*|u|^p\right)\; |u(x)|^p \; dx=-4(p-1) \delta(t).
		\end{align*}
		To prove \eqref{Secderiv}, it suffices to show that for $R \geq R_1$
		\begin{align*}
			A_R\big(u(t)\big)  \leq 2(p-1) \delta(t).
		\end{align*}
		We divide the proof into three steps:
		
		\noindent{\bf Step 1: General bound on $A_R(u(t))$.} We claim that
		there exists $C>0$ such that for any  $R>0$, $t\geq 0$, we have
		\begin{align}\label{errcontrol1:sup}
			A_R\big(u(t)\big) \leq \frac{C}{R^2} + \frac{C}{R^{\frac{p(N-1)}{N}}}
			\big\|u(t)\big\|^{\frac{p(N+1)}{N}}_{\dot H^1}.
		\end{align}
		Indeed, on one hand,  by the facts that
		$
		\phi''(r) \leq 1,\; r\geq 0$, $ \big| -\Delta \Delta
		\phi_R \big| \lesssim  R^{-2},
		$
		we have
		\begin{align}\label{est:Ar1}
			A_R\big(u(t)\big) \leq C \left(\frac{\big\|u(t)\big\|^2_{L^2}}{R^2}
			+   \int_{|x|>R} (I_{\lambda}*|u|^p)|u|^{p} \; dx\right).
		\end{align}
		On the other hand,	by the Hardy-Littlewood-Sobolev inequality, mass conservation and the Sobolev inequality, we have
		\begin{align*}
			\int_{|x|>R} (I_{\lambda}*|u|^p)|u|^{p} \; dx &\lesssim \big\|u(t)\big\|^p_{L^{\frac{2N}{N-2}}(|x|>R)}\big\|u(t)\big\|^p_{L^{\frac{2N}{N-2}}}\\
			&\lesssim \big\|u(t)\big\|^p_{L^{\frac{2N}{N-2}}}\big\|u(t)\big\|^{\frac{2p}{N}}_{L^{\infty}(|x|>R)}\big\|u(t)\big\|^{\frac{pN-2p}{N}}_{L^2}\\
			&\lesssim \big\|\nabla u(t)\big\|^p_{L^2}\big\|u(t)\big\|^{\frac{2p}{N}}_{L^{\infty}(|x|>R)}\\
			&\lesssim \big\|\nabla u(t)\big\|^p_{L^2}\left(\frac{1}{R^{N-1}}\big\|\nabla u(t)\big\|_{L^2}\big\| u(t)\big\|_{L^2}\right)^{\frac{p}{N}}\\
			&\lesssim \frac{1}{R^{\frac{p(N-1)}{N}}}\big\|\nabla u(t)\big\|_{L^2}^{\frac{p(N+1)}{N}}.
		\end{align*}
		which together with \eqref{est:Ar1} gives \eqref{errcontrol1:sup}.
		
		\noindent{\bf Step 2: Refined bound on $A_R(u(t))$ when $\delta(t)$
			small.} We claim that there exist $\delta_1$,  $C>0$ such that for
		any $R > 1$, and $t\geq 0$, and $\delta(t)\leq \delta_1$,  we have
		\begin{align}\label{errcontrol2:sup}
			\Big| A_R(u(t)) \Big| \leq C \left( \frac{1}{ R ^{\frac{N-2}{2}}}
			\delta(t) + \delta(t)^2\right).
		\end{align}
		
		To do so, we first show that for small $\delta_1$,
		\begin{align}\label{sf:sup}
			\mu_-:=\inf \Big\{\mu(t), t\geq 0, \delta(t)\leq \delta_1 \Big\}
			> 0.
		\end{align}
		Indeed, by \eqref{modulcomp} and Lemma \ref{estmodulpara}, we have
		\begin{align*} u_{\theta(t), \mu(t)} (t)= W + V,\; \text{with}\;\;
			\big\|V(t)\big\|_{\dot H^1} \approx \delta(t).
		\end{align*}
		By the mass conservation, we have
		\begin{align*}
			\big\|u_0\big\|^2_{L^2} \geq & \int_{|x|\leq \frac{1}{\mu(t)}} \big| u(t,x)
			\big|^2 dx = \frac{1}{\mu(t)^2} \int_{|x|\leq 1} \big| u_{\theta(t),
				\mu(t)}\big|^2 dx \\
			\gtrsim & \frac{1}{\mu(t)^2} \left(\int_{|x|\leq 1}W^2 dx
			- \int_{|x|\leq 1}\big|V(t) \big|^2 dx \right).
		\end{align*}
		
		While by the Sobolev inequality, we have
		\begin{align*}
			\big\|V(t)\big\|_{L^2(|x|\leq 1)} \lesssim
			\big\|V(t)\big\|_{L^{\frac{2N}{N-2}}(|x|\leq 1)} \lesssim
			\big\|V(t)\big\|_{\dot H^1} \lesssim \delta(t),
		\end{align*}
		which implies that
		\begin{align*}
			\big\|u_0\big\|^2_{L^2} \geq  \frac{1}{\mu(t)^2} \left(
			\int_{|x|\leq 1} W^2 dx -  C \delta(t)^2 \right).
		\end{align*}
		We obtain \eqref{sf:sup} if choosing sufficiently small $\delta_1$.
		Now by  \eqref{modulcomp} and the change of variable, we have
		\begin{align*}
			\Big| A_R(u(t)) \Big| = \Big| A_R\big((W+V)_{-\theta(t),1/\mu(t) }
			\big) \Big| = \Big| A_{R\mu(t)}\big( W+V   \big) \Big|.
		\end{align*}
		Note that
		\begin{align*}
			A_{R\mu(t)}(W)=0,\;\text{and}\;\; \big\|\nabla W\big\|_{L^2(|x|\geq
				\rho)}\approx \big\| W\big\|_{L^{\frac{2N}{N-2}}(|x|\geq
				\rho)}\approx \rho^{-\frac{N-2}{2}}\; \text{for}\; \rho\geq 1,
		\end{align*}
		we have
		\begin{align*}
			\Big| A_R(u(t)) \Big|   = & \Big| A_{R\mu(t)}(W+V)-
			A_{R\mu(t)}(W) \Big|
			\\
			\lesssim &\;  \int_{|x|\geq R\mu(t)}\big| \nabla W \cdot \nabla V(t)
			\big| + \big| \nabla V(t)\big|^2
			dx \\
			& + \int_{R\mu(t)\leq |x|\leq 2R\mu(t)}
			\frac{1}{\big(R\mu(t)\big)^2} \left(\big|
			W \cdot V(t) \big| + \big| V(t)\big|^2  \right) dx \\
			& + \iint_{|x|\geq R\mu(t) }
			I_{\lambda}(x-y)\big|W(x)+V(t,x)\big|^{p}\big|W(y)+V(t,y)\big|^p\\
			&\qquad \quad  \qquad  \quad - I_{\lambda}(x-y)W(x)^{p}W(y)^p\; dxdy\\
			\lesssim & \frac{1}{\big(R\mu(t)\big)^{\frac{N-2}{2}}} \big\|V(t)\big\|_{\dot
				H^1} + \big\|V(t)\big\|^2_{\dot
				H^1} + \big\|V(t)\big\|^3_{\dot
				H^1} + \big\|V(t)\big\|^{2p}_{\dot
				H^1},
		\end{align*}
		which together with \eqref{sf:sup} implies that
		\eqref{errcontrol2:sup} if $\delta_1$ is small enough.

		\noindent{\bf Step 3: Conclusion.} From \eqref{errcontrol2:sup},
		there exist $\delta_2 > 0 $ and $R_2\geq 1$ such that if $R\geq R_2$, $\delta(t)\leq
		\delta_2$, then we have
		\begin{align*}
			\Big| A_R(u(t)) \Big| \leq C \left( R ^{-\frac{N-2}{2}} \delta(t) +
			\delta(t)^2\right) \leq 2(p-1) \delta(t).
		\end{align*}
		
		Now Let
		\begin{align*}
			f_{R_3}(\delta):=\frac{C}{R_3^2} + \frac{C}{R_{3}^{\frac{p(N-1)}{N}}}
			\big( \delta+ \big\|W\big\|^2_{\dot H^1}\big)^{\frac{p(N+1)}{2N}}-2(p-1)\delta,
		\end{align*}
		where $C$ is given by \eqref{errcontrol1:sup}. Choosing $R_3 $
		large enough such that $f_{R_3}(\delta_3)\leq 0$, and
		$f'_{R_3}(\delta_3)\leq 0$, we have for any $R\geq R_3$, $\delta\geq
		\delta_3$ that
		\begin{align*}
			f_{R}(\delta)\leq f_{R_3}(\delta) \leq 0,
		\end{align*}
		which implies that $ A_R(u(t)) \leq 2(p-1) \delta(t)$, so we conclude the
		proof of \eqref{Secderiv} with $R_1=\max(R_2, R_3)$.
		
		Finally, using the positivity of $V_R(t)$, $\partial^2_t
		V_R(t) < 0$ for $t>0$ and $u$ is defined on $[0, +\infty)$, we have $\partial_t V_R(t) > 0$  by the convexity analysis.
	\end{proof}
	
	\begin{lemma}\label{L:inteexpondecay} Under the assumptions of Proposition
		\ref{expdecay:supercase},  there exist $c>0$, $C>0$   such that
		for $R\geq R_1$ ( which is given in Lemma \ref{L:secderiv}), and all
		$t\geq 0$,
		\begin{align}
			\label{expdecay:Inte:supercase}
			\int^{+\infty}_t  \delta(s)ds   \leq & \; C  e^{-ct}.
		\end{align}
	\end{lemma}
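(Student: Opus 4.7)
The plan is to combine the two estimates in Lemmas~\ref{L:firder} and~\ref{L:secderiv} with a simple Gronwall-type argument for the decreasing function $f(t):=\int_t^{+\infty}\delta(s)\,ds$.

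Fix $R\geq R_1$ (as in Lemma~\ref{L:secderiv}) so that, for all $t\geq 0$,
\begin{equation*}
\partial_t^2 V_R(t)\leq -2(p-1)\delta(t), \qquad \partial_t V_R(t)>0, \qquad |\partial_t V_R(t)|\leq CR^2\,\delta(t).
\end{equation*}
First I would integrate the second-derivative bound on $[t,T]$ to get
\begin{equation*}
2(p-1)\int_t^T \delta(s)\,ds \;\leq\; \partial_t V_R(t)-\partial_t V_R(T)\;\leq\;\partial_t V_R(t),
\end{equation*}
where the last inequality uses the positivity of $\partial_t V_R$ on $[0,+\infty)$. Since the left-hand side is monotone in $T$, I may let $T\to+\infty$ and combine with the first-derivative bound to obtain
\begin{equation*}
\int_t^{+\infty}\delta(s)\,ds \;\leq\; \frac{1}{2(p-1)}\partial_t V_R(t) \;\leq\; C\,\delta(t),
\end{equation*}
for a constant $C$ depending on $R$ but not on $t$. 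In particular the integral is finite.

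Next I would introduce $f(t):=\int_t^{+\infty}\delta(s)\,ds$, which is nonnegative, non-increasing, and of class $C^1$ with $f'(t)=-\delta(t)$. The inequality just derived reads $f(t)\leq -C f'(t)$, i.e.\ $f'(t)\leq -\tfrac{1}{C}f(t)$. Gronwall's lemma then yields
\begin{equation*}
f(t)\leq f(0)\,e^{-t/C}, \qquad t\geq 0,
\end{equation*}
which is exactly \eqref{expdecay:Inte:supercase} with $c=1/C$ and $C$ replaced by $f(0)=\int_0^{+\infty}\delta(s)\,ds<\infty$.

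The only subtle point — and the one worth double-checking — is the passage $T\to+\infty$ in the integrated second-derivative estimate: it relies precisely on $\partial_t V_R(T)\geq 0$ uniformly in $T$, which is \eqref{fderiv:positive}. Once that is in hand, the proof reduces to the differential inequality $f\leq -Cf'$ and is routine. No additional modulation or variational input is needed here, since all the hard work has already been done in Lemmas~\ref{L:firder} and~\ref{L:secderiv}.
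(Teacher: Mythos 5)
Your proof is correct and follows essentially the same route as the paper: integrate the second-derivative bound for $V_R$, drop $\partial_t V_R(T)\geq 0$, invoke the first-derivative bound from Lemma \ref{L:firder} to get $\int_t^{+\infty}\delta(s)\,ds\leq C\delta(t)$, and then close with Gronwall applied to $f(t)=\int_t^{+\infty}\delta(s)\,ds$. The only difference is that you spell out the Gronwall step explicitly via $f'=-\delta$, which the paper leaves implicit.
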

	
	\begin{proof}
		Fix $R\geq R_1$. By \eqref{Fderiv}, \eqref{Secderiv} and
		\eqref{fderiv:positive}, we have 
		\begin{align*} 
			2(p-1)\int^T_t
			\delta(s)ds \leq - \int^T_t
			\partial^2_s V_R(s)ds =
			\partial_tV_R(t) - \partial_tV_R(T) \leq CR^2 \delta(t).
		\end{align*}
		Let $T\rightarrow +\infty $, we have
		\begin{align*}
			\int^{+\infty}_t  \delta(s)ds   \leq C  \delta(t).
		\end{align*}
		By the Gronwall inequality, we have \eqref{expdecay:Inte:supercase}.
	\end{proof}

	\subsection{Convergence of the modulation parameters.} By the variational characterization of W in Proposition \ref{P:static stability}, the relation between modulation parameters and the gradient variant which is shown in Lemma \ref{estmodulpara}, and Lemma \ref{L:inteexpondecay}, we can follow the argument as that  in Section $5.2$ in \cite{MiaoWX:dynamic gHartree} to obtain the convergence of the modulation parameters. Thus, we have
	\begin{align}
		\lim_{t\rightarrow+\infty}\delta(t)=0.\;\; &
		\lim_{t\rightarrow+\infty}\theta(t) =\theta_{0},\;\; \lim_{t\rightarrow+\infty}\mu(t) =\mu_{0} \label{diff:Gradient}\\
		\delta(t) \approx \big| \alpha(t) \big| = & \big| \alpha(t)
		-\alpha(+\infty) \big| \notag\\
		\leq & C \int^{+\infty}_t \big| \alpha'(s)\big| ds \leq C
		\int^{+\infty}_t  \mu^2(s) \delta(s) ds \leq C
		e^{-ct},
	\end{align}
	\begin{equation*}
		\aligned \big\|u-W_{\theta(t), \mu(t)}\big\|_{\dot H^1} + \big|
		\alpha'(t)\big| + \big| \theta'(t)\big| \leq C \delta(t) \leq C
		e^{-ct},
		\endaligned
	\end{equation*}
	which yields \eqref{asymptotic:supercritical case}. \qed

	\subsection{Blowup for the negative times}\label{subs:blowup}
	
	It is a consequence of the positivity of $\partial_t V_R(t)$ in
	\eqref{Fderiv} and the time reversal symmetry. 
	Suppose that $u$ is also global for the negative time. Applying
	Lemma \ref{L:firder}, Lemma \ref{L:secderiv} and Lemma
	\ref{L:inteexpondecay} to $\overline{u}(-t)$, we know that they also
	hold for the negative time.  Hence by \eqref{diff:Gradient}, we
	know that
	\begin{equation*}
		\lim_{t\rightarrow \pm \infty} \delta(t) =0.
	\end{equation*}
	By Lemma \ref{L:firder} and Lemma \ref{L:secderiv}, we know that
	$\partial_t V_R(t)> 0$ and $
	\partial_t V_R(t) \longrightarrow 0, \; \text{as}\; t\rightarrow \pm
	\infty.
	$
	By Lemma \ref{L:secderiv}, we have
	$
	\partial^2_t V_R(t)<0.
	$ This implies that $\partial_tV_R(t)\equiv 0$. It is a
	contradiction and completes the proof. \qed

	
	\section{Convergence to $W$ for the subcritical threshold solution}\label{S:convergence:sub}
	In this section, we consider the subcritical  threshold
	solution $u$ of \eqref{har}. Similar to that in Section
	\ref{S:convergence:sup}, the following result will give the dynamics
	of special threshold solution $W^-$ of Theorem \ref{threholdsolution} in the negative time and
	is also the first step in the proof of case (a) of Theorem
	\ref{classification}.
	\begin{proposition}\label{expdecay:subcase}
		Let $u \in \dot H^1(\R^N)$ be a radial solution of \eqref{har}, and
		$I=(T_-, T_+)$ be its maximal interval of existence. Assume that
		\begin{equation} \label{sub thresh case}
			\aligned E(u_0)=E(W),  \quad \big\| \nabla u_0\big\|_{L^2} <
			\big\|\nabla W\big\|_{L^2}.
			\endaligned
		\end{equation}
		Then $$I=\R.$$ Furthermore, if $u$ does not scatter for the positive
		time, that is,
		\begin{equation}\label{Noscattering}
			\aligned \big\|u\big\|_{Z(0,+\infty)}=\infty,
			\endaligned
		\end{equation}
		then
		there exist $\theta_0\in \R, \mu_0>0, c, C>0$ such that
		\begin{equation*}\label{asymptotic:subcritical case}
			\aligned \forall \;t\geq 0, \quad \big\|u-W_{\theta_0,
				\mu_0}\big\|_{\dot H^1} \leq Ce^{-ct},
			\endaligned
		\end{equation*}and
		\begin{equation*}\label{negscattering}
			\aligned \big\|u\big\|_{Z(-\infty, 0)}< \infty.
			\endaligned
		\end{equation*}
		An analogous assertion holds on $(-\infty, 0]$.
	\end{proposition}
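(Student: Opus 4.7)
The proof adopts the same blueprint as Proposition \ref{expdecay:supercase}, with the sign of the localized Virial identity now inverted so that convexity (rather than concavity) drives the argument, and with the addition of a concentration compactness--rigidity step to reduce matters to a critical element. The plan proceeds in four stages: establishing globality of $I$, producing the critical element, running the convex Virial argument to obtain exponential convergence, and obtaining backward scattering by a rigidity argument.

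For globality, Lemma \ref{energytrapp}(a) propagates the strict inequality $\|\nabla u(t)\|_{2}<\|\nabla W\|_{2}$ throughout $I$, so the $\dot H^1$-norm remains uniformly bounded. Assume $T_+<\infty$ for contradiction; then Theorem \ref{T:local} gives $\|u\|_{Z(0,T_+)}=+\infty$. A linear profile decomposition applied to $\{u(t_n)\}$ with $t_n\nearrow T_+$ produces orthogonal profiles as in Proposition \ref{P:static stability}; the Pythagorean identities for energy and kinetic norm, combined with the strict subcritical condition, force each nonlinear profile to be strictly below threshold, hence to scatter by Theorem \ref{belowthreshold}. A nonlinear profile decomposition plus stability then forces $\|u\|_{Z(0,T_+)}<\infty$, a contradiction, so $T_+=+\infty$; symmetrically $T_-=-\infty$.

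Under the hypothesis $\|u\|_{Z(0,+\infty)}=+\infty$, the same concentration compactness--rigidity machinery produces a critical element whose orbit $\{u(t):t\geq 0\}$ is precompact in $\dot H^1(\R^N)$ modulo scaling and phase (no translation freedom in the radial class). Any subsequential limit of the modulated solution saturates the inequality in Proposition \ref{P:static stability}, so it equals $W$ up to symmetries, yielding $\delta(t)\to 0$ as $t\to+\infty$ and placing us in the modulation regime \eqref{modulcomp} of Section \ref{S:modulation} for all large $t$. The crucial estimate is the convex Virial identity: combining Lemma \ref{L:local virial} with $E(u)=\tfrac{p-1}{2p}\|\nabla W\|_2^2$ gives
\begin{equation*}
\partial_t^2 V_R(t)=4(p-1)\delta(t)+A_R(u(t)),
\end{equation*}
with leading sign now positive. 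The three-step argument of Lemma \ref{L:secderiv}, with the mass-tail control now drawn from compactness of the orbit rather than from $u_0\in L^2$, yields $|A_R(u(t))|\leq 2(p-1)\delta(t)$ for $R$ large, so $\partial_t^2 V_R(t)\geq 2(p-1)\delta(t)$. The bound $|\partial_t V_R(t)|\leq CR^2\delta(t)$ of Lemma \ref{L:firder} carries over unchanged once we again use the modulation decomposition to cancel the leading real contribution from $W$, and $V_R(t)$ itself is uniformly bounded by Sobolev embedding on $\operatorname{supp}\phi_R$ together with the $\dot H^1$-bound on $u$. Convexity plus boundedness force $\partial_t V_R(t)$ to converge to a limit $L$, and $\delta(t)\to 0$ forces $L=0$. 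Integrating $\partial_t^2 V_R\geq 2(p-1)\delta$ from $t$ to $+\infty$ yields
\begin{equation*}
2(p-1)\int_t^{+\infty}\delta(s)\,ds\leq -\partial_t V_R(t)\leq CR^2\delta(t),
\end{equation*}
and Gronwall produces $\int_t^{+\infty}\delta(s)\,ds\leq Ce^{-ct}$. Substituting into Lemma \ref{estmodulpara} gives $\theta(t)\to\theta_0$ and $\mu(t)\to\mu_0$ at exponential rates, whence $\|u-W_{\theta_0,\mu_0}\|_{\dot H^1}\leq Ce^{-ct}$.

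To establish backward scattering, argue by contradiction: if $\|u\|_{Z(-\infty,0)}=+\infty$ too, then applying the analysis above to the time-reversed solution $\overline{u(-\cdot)}$ yields $\delta(t)\to 0$ as $t\to-\infty$ as well, so $\partial_t V_R(t)\to 0$ at both $\pm\infty$. Combined with $\partial_t^2 V_R(t)\geq 0$, the monotonicity of $\partial_t V_R$ forces $\partial_t V_R\equiv 0$, hence $\partial_t^2 V_R\equiv 0$, hence $\delta\equiv 0$; but $\delta(0)=\big|\|\nabla u_0\|_2^2-\|\nabla W\|_2^2\big|>0$ by hypothesis, a contradiction. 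The main technical obstacle is the subcritical analogue of the refined bound $|A_R(u(t))|\leq 2(p-1)\delta(t)$: the proof of Lemma \ref{L:secderiv} uses $u_0\in L^2$ to control the mass-tail via mass conservation, which is unavailable here, so the tail bound must instead be extracted purely from compactness of the orbit modulo scaling, with careful tracking of the interplay between the modulation scale $\mu(t)$ and the cutoff radius $R$. Once that estimate is secured, the remaining analysis propagates through the modulation machinery as in Section \ref{S:convergence:sup}.
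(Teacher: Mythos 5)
Your overall blueprint is the right one, but two of the key claims in the exponential-decay stage are unjustified, and the paper fills exactly those gaps by a different chain of lemmas.

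First, you assert that compactness of the orbit modulo scaling forces $\delta(t)\to 0$ because any subsequential limit of $(u(t_n))_{\lambda(t_n)}$ must "saturate the inequality in Proposition \ref{P:static stability}, so it equals $W$." That inference does not go through: a subsequential limit $v$ has $E(v)=E(W)$ and $\|\nabla v\|_2\le \|\nabla W\|_2$ by lower semicontinuity, but nothing at this stage forces equality of the kinetic norms, so $\delta(v)$ need not vanish. The paper instead proves an ergodic-mean estimate (Lemma \ref{meanconverg}), which uses the Virial identity together with the dispersive lower bound $\sqrt{t}\,\lambda(t)\to\infty$ from Corollary \ref{globalpositive}(b), to get $\frac{1}{T}\int_0^T\delta(t)\,dt\to 0$ and hence only a \emph{sequence} $t_n$ with $\delta(t_n)\to 0$ (Corollary \ref{seqconv}). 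The full pointwise convergence $\delta(t)\to 0$ is only obtained after the exponential integral bound is in hand.

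Second, and more seriously, your derivation $2(p-1)\int_t^{+\infty}\delta(s)\,ds \le -\partial_t V_R(t)\le CR^2\delta(t)$ uses a \emph{single fixed} $R$ valid for all $t\ge 0$. But the refined bound $|A_R(u(t))|\lesssim\epsilon\,\delta(t)$ only holds for $R\ge R_\epsilon/\lambda(t)$, so a fixed $R$ works uniformly only if $\lambda(t)$ is bounded below on $[0,\infty)$. In the supercritical case that lower bound comes from mass conservation (estimate \eqref{sf:sup}) precisely because $u_0\in L^2$; here $u_0\notin L^2$, and precompactness of the orbit modulo scaling by itself does not prevent $\lambda(t)\to 0$. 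You flag this as "a technical obstacle to be handled by tracking the interplay between $\mu(t)$ and $R$," but the inequality you actually write requires the obstacle to already be overcome. The paper's resolution is structurally different: Lemma \ref{virialestimate-delta:positive} is a \emph{local-in-time} Virial bound, $\int_\sigma^\tau\delta\,dt\le C\bigl(\sup_{[\sigma,\tau]}\lambda^{-2}\bigr)(\delta(\sigma)+\delta(\tau))$, with $R$ chosen as $R_2\sup_{[\sigma,\tau]}(1/\lambda(t))$, and it is \emph{coupled} with a separate control-of-parameter lemma (Lemma \ref{control:parameter}) stating $|\lambda(\tau)^{-2}-\lambda(\sigma)^{-2}|\le C\int_\sigma^\tau\delta$. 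It is the interaction of these two estimates (bootstrapped together, as in \eqref{ExpDecay:Inte}) that simultaneously shows $\lambda$ cannot collapse and yields the exponential decay $\int_t^\infty\delta\le Ce^{-ct}$; neither claim can be separated from the other. Your backward-scattering step has the same issue: it again needs a fixed $R$ over all of $\R$, which implicitly requires $1/\lambda^2$ bounded on $\R$ — exactly the conclusion of item (d) in Section \ref{S:sub:scattering}, which comes out of the paper's coupled lemma structure, not from a priori compactness.
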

	
	\subsection{Compactness properties.}\label{subs:compact} We recall some key results of 
	Theorem \ref{belowthreshold} and Remark \ref{belowthreshold:blow up}. Note
	that the fact that the solution with
	\begin{equation*}
		\aligned E(u)<E(W),  \quad \big\| \nabla u_0\big\|_{L^2} <
		\big\|\nabla W\big\|_{L^2},
		\endaligned
	\end{equation*}
	is global well-posed and scatters in both time directions. By Lemma
	\ref{energytrapp}, the concentration compactness principle and the stability theory as the proof of Proposition $4.2$ in \cite{MiaoXZ:09:e-critical radial Har} and Lemma $6.2$ in \cite{MiaoWX:dynamic gHartree}, we can show the following pre-compact property of minimal energy blow-up solution.
	
	\begin{lemma}[\cite{MiaoWX:dynamic gHartree, MiaoXZ:09:e-critical radial Har}] \label{compactness}
		Let $u$ be a radial solution of \eqref{har} satisfying
		\begin{align*}
			E(u_0)=E(W),  \;\; \big\| \nabla u_0\big\|_{L^2} < \big\|\nabla
			W\big\|_{L^2}, \; \; \big\|u\big\|_{Z(0,T_+)}=+\infty.
		\end{align*}   Then there
		exists a continuous functions $\lambda(t)$ such that the set
		\begin{equation}\label{Kcompact}
			\aligned K:= \big\{ (u(t))_{\lambda(t)} , t\in [0,T_+) \big\}
			\endaligned
		\end{equation}
		is  pre-compact in $\dot H^1_{\text{rad}}(\R^N)$, where $(u(t))_{\lambda(t)}:=\lambda(t)^{-\frac{N-2}{2}}u(t,\frac{x}{\lambda(t)})$.
	\end{lemma}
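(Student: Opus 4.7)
The plan is to run the standard Kenig--Merle concentration compactness machinery adapted to the nonlocal Hartree nonlinearity, in the spirit of \cite{MiaoXZ:09:e-critical radial Har} and Lemma~6.2 of \cite{MiaoWX:dynamic gHartree}. Fix any sequence $t_n \to T_+$; it suffices to exhibit scales $\lambda_n$ so that $\lambda_n^{-(N-2)/2} u(t_n, \cdot / \lambda_n)$ has a strongly convergent subsequence in $\dot H^1_{\text{rad}}(\R^N)$, since this together with a routine continuity argument will produce a globally defined continuous $\lambda(t)$ and the precompactness of $K$ in \eqref{Kcompact}.

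First I would apply the radial linear profile decomposition (Keraani/Bahouri--G\'erard type, valid in $\dot H^1_{\text{rad}}(\R^N)$) to $\{u(t_n)\}$, writing
\begin{equation*}
u(t_n, x) = \sum_{j=1}^{J} \bigl(\lambda_n^{j}\bigr)^{-\frac{N-2}{2}} V^j\bigl(x / \lambda_n^{j}\bigr) + r_n^J(x),
\end{equation*}
with orthogonal scales $\lambda_n^j$, profiles $V^j \in \dot H^1_{\text{rad}}$, and $\dot B^1_{2,\infty}$-smallness of the remainder. By the Pythagorean expansion for the $\dot H^1$ norm together with the HLS-type expansion of the Hartree potential energy (which decouples via Lemma~\ref{L:hardy} and Lemma~\ref{L:sobolev} exactly as in the argument of Proposition~\ref{P:static stability}), the energy and kinetic energy split asymptotically. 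Because $E(u(t_n)) = E(W)$ and $\|\nabla u(t_n)\|_{2} < \|\nabla W\|_{2}$ with the convexity relation \eqref{convexity}, each profile satisfies $\|\nabla V^j\|_{2} \leq \|\nabla W\|_{2}$ and $\sum_j E(V^j) \leq E(W)$, with strict inequality on any sub-collection that is a proper subset.

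Next I would associate a nonlinear profile $U^j$ to each $V^j$ (a solution of \eqref{har} with data $V^j$, or its linear evolution concentrating in time if the time parameters diverge). If there were two or more nonzero profiles, each would have energy strictly below $E(W)$ and kinetic energy below $\|\nabla W\|_2$, so by Theorem~\ref{belowthreshold} every $U^j$ would be global and scatter, hence would satisfy a finite $Z$-norm bound. A long-time perturbation lemma for the Hartree nonlinearity (Strichartz together with the nonlinear estimates \eqref{nonlinearestimates}--\eqref{nonlinearestimate:dual}) then transfers this to $u$, contradicting $\|u\|_{Z(0,T_+)} = \infty$. Therefore exactly one profile, say $V^1$, is nonzero, the remainder $r_n^1$ tends to $0$ in $\dot H^1$, and $\lambda_n := \lambda_n^1$ realizes strong compactness of the rescaled orbit.

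The main obstacle I anticipate is twofold: (i) establishing the Pythagorean decoupling for the nonlocal Hartree potential energy, which requires combining the vanishing of $r_n^J$ in $\dot B^1_{2,\infty}$ with the improved Sobolev embedding and the mapping properties of the Riesz potential so that all cross terms vanish as $n, J \to \infty$; and (ii) the long-time stability/perturbation statement for \eqref{har} needed to compare $u$ with the superposition of nonlinear profiles, where the convolution structure forces one to control terms of the form $I_\lambda * (W^{p-1} h)$ and the sharp HLS/Hardy bounds must be invoked carefully. Both steps have been carried out for the case $\lambda=4$, $p=2$ in \cite{MiaoWX:dynamic gHartree} and for related energy-critical Hartree problems in \cite{MiaoXZ:09:e-critical radial Har}; the present range $0<\lambda<N$, $\lambda \leq 4$, $p = (2N-\lambda)/(N-2)$ requires only bookkeeping changes in the exponents and uses no new input beyond what is already stated in Section~\ref{S:preli}.
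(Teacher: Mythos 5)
Your proposal reproduces the standard Kenig--Merle concentration-compactness / rigidity scheme — radial profile decomposition, Pythagorean decoupling of the $\dot H^1$ and nonlocal potential energies, comparison with the subthreshold scattering result Theorem~\ref{belowthreshold}, and a long-time perturbation argument — which is exactly the route taken in the cited references \cite{MiaoXZ:09:e-critical radial Har} and Lemma~6.2 of \cite{MiaoWX:dynamic gHartree}. The paper itself does not reprove this lemma but appeals to those references, so your outline matches the intended approach; the one place worth making explicit is that a nonzero profile with $\|\nabla V^j\|_2 < \|\nabla W\|_2$ automatically has strictly positive energy by the convexity relation \eqref{convexity}, which is what rules out degenerate splittings and forces a single profile.
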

	
	Let $u$ be a solution of \eqref{har}, and $\lambda(t)$ be as in
	Lemma \ref{compactness}. Consider $\delta_0$ as in Section 4. The
	parameters $\theta(t)$, $\mu(t)$ and $\alpha(t)$ are defined for
	$t\in D_{\delta_0}= \{t: \delta(t)< \delta_0 \}$. By
	\eqref{modulcomp} and Lemma \ref{estmodulpara}, there exists a
	constant $C_0>0$ such that for any $ t \in D_{\delta_0},$, we have
	\begin{equation*}
		\int_{\frac{1}{\mu(t)}\leq |x|\leq
			\frac{2}{\mu(t)}}\big|\nabla u(t,x) \big|^2 dx =  \int_{1\leq |x|\leq
			2} \frac{1}{\mu(t)^N} \left| e^{i\theta(t)}\nabla u\left(t,\frac{x}{\mu(t)}\right) \right|^2 dx  
		\geq \int_{1\leq |x|\leq 2}\big|
		\nabla W\big|^2 -C_0 \delta(t).
	\end{equation*}
	Taking a smaller $\delta_0$ if necessary, we can assume that the
	right hand side of the above inequality is bounded from below by a
	positive constant $\varepsilon_0$ on $D_{\delta_0}$. Thus, we have
	\begin{equation*}
		\int_{\frac{\lambda(t)}{\mu(t)}\leq |x|\leq \frac{2
				\lambda(t)}{\mu(t)}}\frac{1}{\lambda(t)^N}\left|\nabla
		u\left(t,\frac{x}{\lambda(t)}\right) \right|^2 dx \geq \int_{1\leq
			|x|\leq 2}\big| \nabla W\big|^2 -C_0 \delta(t), \;\text{for any}\; t \in D_{\delta_0}.
	\end{equation*}
	By the compactness of $\overline{K}$, it follows that for any $t\in
	D_{\delta_0}$, we have $\big| \mu(t) \big| \sim \big| \lambda(t)
	\big| $. As a consequence, we may modify $\lambda(t)$ such that $K$
	defined by \eqref{Kcompact} remains pre-compact in $\dot H^1$ and for any $ t\in D_{\delta_0}$, we have
	\begin{equation}\label{para-uniform-SGD}
		\lambda(t)=\mu(t).
	\end{equation}
	
	As a consequence of Lemma \ref{compactness}, we have
	\begin{corollary}\label{globalpositive}Let $u$ be a radial solution of \eqref{har} satisfying \eqref{sub thresh
			case} and  not scatter for the positive time. Then
		\begin{enumerate}
			\item[\rm (a)] $T_+=+\infty.$
			
			\item[\rm (b)] $\displaystyle \lim_{t \rightarrow \infty} \sqrt{t} \lambda(t) =\infty$;
		\end{enumerate}
	\end{corollary}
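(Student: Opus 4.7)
I plan to prove both parts by contradiction, combining the pre-compactness of $K$ from Lemma \ref{compactness} with the local Cauchy theory of Theorem \ref{T:local} and the Strichartz/perturbation machinery underlying Lemma \ref{Striestimate}, in the spirit of the concentration-compactness rigidity argument of Kenig--Merle.

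For part (a), suppose $T_{+}<+\infty$. The first step is to show $\lambda(t)\to+\infty$ as $t\to T_{+}^{-}$: if along some sequence $t_n\to T_{+}$ the scale $\lambda(t_n)$ stayed bounded, pre-compactness of $K$ would give a strong $\dot H^{1}$-limit $V$ of $(u(t_n))_{\lambda(t_n)}$, and applying Theorem \ref{T:local} to $V$ together with the stability theorem would produce a uniform forward lifespan for $u$ starting at $t_n$, contradicting $t_n\to T_{+}$. Reading the same argument in the rescaled variables also yields the quantitative lower bound $\lambda(t)\gtrsim(T_{+}-t)^{-1/2}$. The second step is a truncated virial computation: the localized second moment $V_R(t)$ of \eqref{localV} satisfies the uniform bound $V_R(t)\le CR^{4}$ on $[0,T_{+})$ by Hardy's inequality and the $\dot H^{1}$-bound coming from pre-compactness, while Lemma \ref{L:local virial} combined with $E(u)=E(W)$ and \eqref{KinEnergy} gives
\[
\partial_{t}^{2}V_R(t)=4(p-1)\delta(t)+A_R(u(t)),\qquad \delta(t)=\|\nabla W\|_{2}^{2}-\|\nabla u(t)\|_{2}^{2}>0.
\]
Pre-compactness of $K$ together with concentration at the origin (since $\lambda(t)\to\infty$) lets me dominate $|A_R(u(t))|$ by a small multiple of $\delta(t)$ for $R$ large, giving uniform convexity $\partial_{t}^{2}V_R(t)\ge c\,\delta(t)$ near $T_{+}$. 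Double integration, combined with the $R^{4}$ upper bound on $V_R$ and the lower bound on $\lambda$ from the first step (which controls $\int\delta$), produces the contradiction.

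For part (b), given $T_{+}=+\infty$ from (a), I assume toward a contradiction that $\sqrt{t_n}\,\lambda(t_n)\le M$ along some $t_n\to+\infty$, and introduce the rescaled solutions
\[
\widetilde u_n(\tau,y):=\lambda(t_n)^{-\tfrac{N-2}{2}}\,u\!\left(t_n+\tau/\lambda(t_n)^{2},\;y/\lambda(t_n)\right),
\]
which are again solutions of \eqref{har} whose rescaled time window covering $[0,t_n]$ in original time has length $t_n\lambda(t_n)^{2}\le M^{2}$. At $\tau=0$, pre-compactness provides (up to subsequence) a strong $\dot H^{1}$-limit $V$ of $\widetilde u_n(0)=(u(t_n))_{\lambda(t_n)}$, with $V\ne 0$ (otherwise the small-data scattering of Theorem \ref{T:local}(d) together with the scale-invariance of $\|\cdot\|_{\dot H^{1}}$ would contradict the non-scattering assumption on $u$). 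The stability theory, combined with pre-compactness of $(u(s))_{\lambda(s)}$ at every intermediate rescaled time, propagates this strong convergence to a non-trivial limiting solution $U$ on the whole bounded interval $[-M^{2},0]$. But at the left endpoint, $\widetilde u_n(-t_n\lambda(t_n)^{2},\cdot)=\lambda(t_n)^{-(N-2)/2}u_0(\cdot/\lambda(t_n))$ converges only weakly, and in fact weakly to zero, in $\dot H^{1}$ as $\lambda(t_n)\to 0$; this contradicts the strong convergence to $U(-M^{2})\ne 0$.

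\textbf{Main obstacle.} The delicate point is the propagation of strong $\dot H^{1}$-compactness of $\widetilde u_n$ across the full bounded window $[-M^{2},0]$ in part (b), which is not automatic from stability applied near $\tau=0$ alone. It requires showing that the scale ratio $\lambda(s)/\lambda(t_n)$ stays in a fixed compact subset of $(0,\infty)$ uniformly in $n$ and $s\in[0,t_n]$, a no-escape-to-infinity statement for the scales at the threshold energy level $E(W)$. Establishing this typically needs a careful profile-decomposition argument, together with Lemma \ref{energytrapp} to keep the iterated solutions in the sub-threshold regime.
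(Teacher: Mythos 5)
The paper itself omits this proof and refers to Corollaries 6.3--6.4 of \cite{MiaoWX:dynamic gHartree} (following the Kenig--Merle scheme). Measured against that reference argument, your proposal has gaps in both parts.

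For part (a), your first step (that $T_{+}<\infty$ forces $\lambda(t)\to\infty$ via pre-compactness of $K$ and the local Cauchy theory) is correct and standard. The problem is the second step. The truncated virial gives you three facts near $T_{+}$: $0\le V_R(t)\le CR^4$, $|\partial_t V_R|\le CR^2$, and $\partial_t^2V_R(t)\ge c\,\delta(t)\ge 0$ once $R$ is large. Together these say $V_R$ is nonnegative, bounded, convex, with bounded derivative on $[0,T_{+})$; that is entirely consistent (e.g.\ $V_R'$ monotone with a finite limit, $\int_0^{T_+}\delta<\infty$), so no contradiction emerges. The missing quantitative input would be a positive lower bound on $\delta(t)$ near $T_{+}$, which you do not have at the threshold level; indeed $\delta$ can tend to $0$ along sequences. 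What actually closes (a) in the reference proof is a mass-concentration argument, not a virial one: using pre-compactness and $\lambda(t)\to\infty$, one shows $\int_{|x|\le R}|u(t)|^2\to 0$ as $t\to T_{+}^{-}$ for each $R$; Hardy's inequality gives the $R$-uniform bound $\bigl|\tfrac{d}{dt}\int_{|x|\le R}|u|^2\bigr|\le C$; integrating between $t$ and $T_{+}$ and sending $R\to\infty$ yields $u(t)\in L^2$ with $\|u(t)\|_{L^2}^2\le C(T_{+}-t)$, and by persistence of regularity and $L^2$ conservation $\|u(t)\|_{L^2}$ is constant, hence zero --- contradicting $E(u_0)=E(W)>0$. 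Your proposal never invokes mass concentration, and the virial estimates you do invoke do not suffice.

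For part (b), your set-up (rescale at $t_n$, pass to the strong limit $w_0\ne 0$ of $\widetilde u_n(0)$, and test against the weak vanishing of $\widetilde u_n(-t_n\lambda(t_n)^2)=\lambda(t_n)^{-\frac{N-2}{2}}u_0(\cdot/\lambda(t_n))$ as $\lambda(t_n)\to 0$) is the right structure, and your weak-convergence claim is correct. However, you flag as the ``main obstacle'' the need to propagate strong $\dot H^1$-compactness of the sequence $\widetilde u_n$ across the whole window and a ``no-escape-to-infinity'' statement for the scales $\lambda(s)/\lambda(t_n)$. That is not how the argument is closed, and trying to close it that way introduces a genuine unfilled step. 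The actual mechanism is to work relative to the single limiting solution $w$ with data $w_0$: by Lemma \ref{energytrapp} one has $E(w_0)=E(W)$ and $\|\nabla w_0\|_2\le\|\nabla W\|_2$, so by Corollary \ref{global} (a consequence of part (a), hence available at this point) $w$ is a \emph{global} solution with finite Strichartz norms on any compact time interval. Continuous dependence (Theorem \ref{T:local}(c)) then yields $\widetilde u_n\to w$ uniformly on compact intervals containing $[-\tau_0^2,0]$, in particular $\widetilde u_n(-t_n\lambda(t_n)^2)\to w(-\tau_0^2)$ strongly. Combined with the weak vanishing, $w(-\tau_0^2)=0$, hence $w\equiv 0$ by uniqueness, contradicting $w_0\ne 0$. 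No propagation-of-compactness or uniform scale control on the sequence is needed; the global-in-time property of the limit $w$ (which your outline does not use) is precisely the ingredient that removes your obstacle.
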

	
	As a direct consequence of (a) in Corollary \ref{globalpositive}, we
	have
	
	\begin{corollary}\label{global}
		Let $u$ be a radial solution of \eqref{har} with the maximal lifespan
		interval $I$ and  satisfy
		\begin{equation*} \aligned E(u_0)\leq E(W), \quad \big\|\nabla
			u_0\big\|_2 \leq \big\|\nabla W \big\|_{2},
			\endaligned
		\end{equation*}
		then $I=\R.$
	\end{corollary}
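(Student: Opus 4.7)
The plan is to reduce the statement to three disjoint cases determined by the energy/kinetic configuration and invoke an existing result in each. First, I observe that the convexity relation \eqref{convexity} implies that $\|\nabla u_0\|_2 = \|\nabla W\|_2$ with $E(u_0)\le E(W)$ forces $E(u_0) = E(W)$: plugging $\|\nabla u_0\|_2/\|\nabla W\|_2 = 1$ into the first implication of \eqref{convexity} yields $E(u_0)/E(W)\ge 1$. Hence exactly one of the following holds:
\begin{enumerate}
\item[(i)] $E(u_0) < E(W)$ and $\|\nabla u_0\|_2 < \|\nabla W\|_2$;
\item[(ii)] $E(u_0) = E(W)$ and $\|\nabla u_0\|_2 = \|\nabla W\|_2$;
\item[(iii)] $E(u_0) = E(W)$ and $\|\nabla u_0\|_2 < \|\nabla W\|_2$.
\end{enumerate}

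Case (i) falls under the below-threshold radial theory (Theorem \ref{belowthreshold} and its analogues for $N\ge 3$), which produces a global solution in particular. Case (ii) is settled immediately by Lemma \ref{energytrapp}(b): the solution coincides with the static ground state $W$ up to the symmetries of \eqref{har}, which is defined for all time. So the only case requiring a dedicated argument is (iii).

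For (iii) I would split once more according to whether $u$ scatters for positive time. If $\|u\|_{Z(0,T_+)} < \infty$, the standard continuation criterion coming from the Strichartz estimates of Lemma \ref{Striestimate} together with the local theory of Theorem \ref{T:local} gives $T_+ = +\infty$. If on the contrary $\|u\|_{Z(0,T_+)} = +\infty$, then the hypotheses of Corollary \ref{globalpositive}(a) are met (energy at threshold, subcritical kinetic energy maintained for all $t\in I$ by Lemma \ref{energytrapp}(a), non-scattering forward in time) and its conclusion delivers $T_+ = +\infty$ directly. Either way, $T_+ = +\infty$. To recover $T_- = +\infty$ I would apply the time-reversal symmetry (e) from the introduction: $\overline{u(-t,x)}$ is a radial solution of \eqref{har} with the same energy and the same kinetic energy as $u_0$, so the argument above applied to this reversed solution gives its positive time of existence equal to $+\infty$, i.e.\ $T_-(u)=+\infty$.

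The only potentially delicate point is case (iii), but essentially all the work has been front-loaded into Corollary \ref{globalpositive}(a), whose proof rests on the compactness property in Lemma \ref{compactness} and on the below-threshold scattering result of Theorem \ref{belowthreshold}. Granted those, the present corollary is a clean assembly of previously established facts, and the main issue is merely organizing the case distinction so that no configuration is missed.
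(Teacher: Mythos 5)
Your proof is correct and matches the paper's intended approach; the paper deduces Corollary \ref{global} from Corollary \ref{globalpositive}(a) (together with Lemma \ref{energytrapp} and the below-threshold theory) and leaves the case analysis and time-reversal step implicit, which is exactly what you spell out.
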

	The proofs of these results are same as the proofs of Corollary $6.3$ and Corollary $6.4$ in \cite{MiaoWX:dynamic gHartree}, which is the consequence of the compactness of $\overline{K}$ in $\dot H^1(\R^N)$, so we omit the details.

	\subsection{Convergence in the ergodic mean.}
	\begin{lemma}\label{meanconverg}
		Let $u$ be a radial solution of \eqref{har} satisfying  \eqref{sub
			thresh case} and  \eqref{Noscattering}. Then
		\begin{equation*}
			\aligned \lim_{T\rightarrow +\infty}
			\frac{1}{T}\int^{T}_{0}\delta(t) dt =0,
			\endaligned
		\end{equation*}
		where $\delta(t)$ is defined by \eqref{def:delta}.
	\end{lemma}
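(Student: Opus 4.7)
The plan is to integrate the localized virial identity from Lemma \ref{L:local virial}. Since $E(u)=E(W)$ and $\|\nabla u\|_{2} < \|\nabla W\|_{2}$, the same computation that produced $-4(p-1)\delta(t)$ in the supercritical case of Lemma \ref{L:secderiv} now yields, with opposite sign,
\begin{equation*}
\partial_t^2 V_R(t) = 4(p-1)\delta(t) + A_R(u(t)),\qquad \delta(t)\geq 0.
\end{equation*}
Integrating over $[0,T]$ and dividing by $T$ reduces the lemma to showing that for a suitable $R=R(T)$, both $\bigl(|\partial_t V_R(T)|+|\partial_t V_R(0)|\bigr)/T$ and $\tfrac{1}{T}\int_0^T |A_R(u(t))|\,dt$ tend to zero.

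\textbf{Step 1: boundary terms.} By Cauchy--Schwarz, $|\nabla \phi_R|\lesssim R$ with $\mathrm{supp}(\nabla\phi_R)\subset\{|x|\leq 2R\}$, Hölder on a ball of radius $2R$, and Sobolev embedding, one gets the uniform bound $|\partial_t V_R(t)|\lesssim R^2\|\nabla u\|_{2}^2\lesssim R^2$, using Lemma \ref{energytrapp}(a) for the $\dot H^1$ control.

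\textbf{Step 2: tail of $A_R$.} The pre-compactness of $\overline{K}$ in $\dot H^1$ from Lemma \ref{compactness} produces a modulus $\tau_1(M)\to 0$ as $M\to\infty$ such that, after scaling $y=\lambda(t)x$,
\begin{equation*}
\int_{|x|\geq R}|\nabla u(t)|^2\, dx + \|u(t)\|^{2}_{L^{2N/(N-2)}(|x|\geq R)} \leq \tau_1(R\lambda(t)),
\end{equation*}
uniformly in $t$. Combining this with the Hardy--Littlewood--Sobolev inequality to handle the nonlocal tail $\int_{|x|\geq R}(I_\lambda*|u|^p)|u|^p\, dx$, and inspecting the three pieces of $A_R$ from Lemma \ref{L:local virial}, yields a uniformly bounded modulus $\tau^*$ with $\tau^*(M)\to 0$ as $M\to\infty$ and
\begin{equation*}
|A_R(u(t))| \leq \tau^*(R\lambda(t))\qquad\text{for all } t,R>0.
\end{equation*}

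\textbf{Step 3: choice of $R(T)$.} Set $R(T)=\sqrt{T}/\kappa$ with $\kappa$ a large parameter. For fixed $\eta>0$, choose $M_0$ so that $\tau^*(M_0)\leq \eta$, then $\kappa$ so large that $C/\kappa^2\leq \eta$. By Corollary \ref{globalpositive}(b), $\sqrt{t}\,\lambda(t)\to\infty$, so there exists $T_\kappa$, independent of $T$, with $\sqrt{t}\,\lambda(t)\geq M_0\kappa$ for $t\geq T_\kappa$. For $t\in[T_\kappa,T]$,
\begin{equation*}
R(T)\lambda(t) \geq \frac{\sqrt{T}}{\kappa}\cdot\frac{M_0\kappa}{\sqrt{t}} = M_0\sqrt{T/t}\geq M_0,
\end{equation*}
hence $|A_{R(T)}(u(t))|\leq \eta$ there, while on the remaining set $[0,T_\kappa]$ of fixed measure I use the universal bound on $|A_R|$ from Step 1 in the proof of Lemma \ref{L:secderiv}. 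Altogether,
\begin{equation*}
\frac{1}{T}\int_0^T \delta(t)\, dt \leq C\Big(\frac{1}{\kappa^2} + \eta + \frac{T_\kappa}{T}\Big) \leq C\eta
\end{equation*}
for $T$ large enough. Since $\eta>0$ is arbitrary, the Cesàro limit vanishes.

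The main obstacle is Step 2: obtaining the scale-invariant tail estimate $|A_R(u(t))|\leq\tau^*(R\lambda(t))$ uniformly in $t$ for the nonlocal Hartree piece $\int_{|x|\geq R}(I_\lambda*|u|^p)|u|^p\, dx$, where Hardy--Littlewood--Sobolev must be coupled with the scale-invariant tightness furnished by the compactness of $\overline{K}$; this is more delicate than the analogous step for local critical nonlinearities in \cite{DuyMerle:NLS:ThresholdSolution}.
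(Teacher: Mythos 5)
Your proposal follows the same strategy as the paper's proof: integrate the localized virial identity, use the uniform bound $|\partial_t V_R|\lesssim R^2$, exploit precompactness of $\overline{K}$ in $\dot H^1$ to make the error $A_R$ small once $R\lambda(t)$ is large, invoke Corollary \ref{globalpositive}(b) to guarantee $\sqrt{t}\,\lambda(t)\to\infty$, and choose $R\sim\sqrt{T}$. The only cosmetic differences are that you isolate the initial segment $[0,T_\kappa]$ more explicitly and parametrize $R=\sqrt{T}/\kappa$ rather than $R=\epsilon_0\sqrt{T}$; the substance is identical, including the Hardy--Littlewood--Sobolev treatment of the nonlocal tail, which the paper already records in Step 1 of Lemma \ref{L:secderiv}.
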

	\begin{proof} By Lemma \ref{L:local virial} and the Hardy inequality, we have
		\begin{align*}
			\big| \partial_t V_R(t)\big| \leq & C R^2.
		\end{align*}
		In addition, by $E(u)=E(W)=\frac{p-1}{2p}\big\|\nabla W\big\|^2_{L^2}$, we have
		\begin{align*}
			\partial^2_t V_R(t)=&  4(p-1) \delta(t) + A_R(t),
		\end{align*}
		where $ A_R\big(u(t)\big)$ is defined in Lemma \ref{L:local virial}.
		By the compactness of $\overline{K}$ in $\dot H^1$, for any
		$\epsilon >0$, there exists $\rho_{\epsilon} >0$ such that
		\begin{align*}
			\int_{|x|\geq \frac{\rho_{\epsilon}}{\lambda(t)}} \big| \nabla
			u(t,x) \big|^2 dx \leq \epsilon,
		\end{align*}
		which together with Lemma \ref{L:secderiv} implies  for $\displaystyle R \geq  \rho_{\epsilon}/\lambda(t)$, that 
		\begin{align}\label{smalremainder}
			\forall \; t\geq 0,\;  \big| A_R(t) \big| \leq \epsilon.
		\end{align}
		
		Fix $\epsilon$, choosing $\epsilon_0$ and $M_0$ such that
		\begin{align*}
			2C \epsilon^2_0 = \epsilon,\;\; M_0\epsilon_0 \geq \rho_{\epsilon}.
		\end{align*}
		By Corollary \ref{globalpositive} (b), there exists $t_0\geq 0$ such
		that
		\begin{align*}
			\forall \; t\geq t_0, \;\; \lambda(t) \geq \frac{M_0}{\sqrt{t}}.
		\end{align*}
		For $T\geq t_0$, let
		\begin{align*}
			R:=\epsilon_0 \sqrt{T}.
		\end{align*}
		For $t\in [t_0, T]$, we have
		\begin{align*}
			R = \epsilon_0 \sqrt{T} \; \frac{ M_0 }{\sqrt{t} \lambda(t)} =
			\frac{\sqrt{T}}{\sqrt{t}}\; \frac{M_0 \epsilon_0}{ \lambda(t)} \geq
			\frac{\rho_{\epsilon}}{\lambda(t)},
		\end{align*}
		which yields that
		\begin{align*}
			4(p-1)\int^T_t \delta(s) ds \leq & \int^T_t \partial^2_s V_R(s) ds +
			\big| A_R(s) \big| \big( T -t_0\big) \leq   2CR^2 + \epsilon T = 2
			\epsilon T.
		\end{align*}
		Let $T\rightarrow +\infty$, we have
		\begin{align*}
			\lim_{T\rightarrow+\infty} \frac{1}{T} \int^T_0 \delta(s) ds \leq
			\frac{\epsilon}{2},
		\end{align*}
		which concludes the proof.
	\end{proof}

	\begin{corollary}\label{seqconv}
		Let $u$ be a radial solution of \eqref{har} satisfying  \eqref{sub
			thresh case} and  \eqref{Noscattering}.  Then there exists a
		sequence $t_n$ such that $t_n \rightarrow +\infty$ and
		\begin{equation*}
			\aligned \lim_{n\rightarrow +\infty} \delta(t_n)=0.
			\endaligned
		\end{equation*}
	\end{corollary}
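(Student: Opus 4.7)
The plan is to deduce this directly from Lemma \ref{meanconverg} by a standard liminf argument for nonnegative functions whose Cesàro average vanishes. The quantity $\delta(t)$ defined in \eqref{def:delta} is nonnegative, so the hypothesis
\begin{equation*}
\lim_{T\to +\infty}\frac{1}{T}\int_0^T \delta(t)\,dt = 0
\end{equation*}
already forces $\liminf_{t\to+\infty}\delta(t)=0$, and this is exactly the statement we need.

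More precisely, I would argue by contradiction. Suppose no such sequence exists; then $\liminf_{t\to+\infty}\delta(t)>0$, so there exist $\varepsilon>0$ and $T_0\geq 0$ with $\delta(t)\geq \varepsilon$ for all $t\geq T_0$. For $T>T_0$ this yields
\begin{equation*}
\frac{1}{T}\int_0^T \delta(s)\,ds \;\geq\; \frac{1}{T}\int_{T_0}^T \varepsilon \,ds \;=\; \varepsilon\,\frac{T-T_0}{T},
\end{equation*}
and letting $T\to+\infty$ gives $\liminf_{T\to+\infty}\frac{1}{T}\int_0^T\delta(s)\,ds \geq \varepsilon>0$, contradicting Lemma \ref{meanconverg}. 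Hence a sequence $t_n\to+\infty$ with $\delta(t_n)\to 0$ must exist.

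There is essentially no obstacle: the only ingredient beyond Lemma \ref{meanconverg} is the measure-theoretic triviality that a nonnegative function whose ergodic mean vanishes must have liminf zero at infinity. Note that Corollary \ref{globalpositive}(a) ensures $T_+=+\infty$, so the integral and the sequence $t_n$ live in the actual maximal interval of existence, making the statement meaningful.
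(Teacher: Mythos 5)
Your proof is correct and is precisely the argument the paper intends (the corollary is stated without proof as an immediate consequence of Lemma \ref{meanconverg}): since $\delta(t)\geq 0$, vanishing of the Cesàro mean forces $\liminf_{t\to+\infty}\delta(t)=0$, which is exactly the existence of the required sequence.
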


	\subsection{Exponential convergence.}\label{subs:expconv}
	
	As the same with Section \ref{S:convergence:sup}, we will give the exponential convergence of the gradient variant, which will prove the Proposition \ref{expdecay:subcase}. The two ingredients of the proof of Proposition \ref{expdecay:subcase} are the local virial argument (lemma \ref{virialestimate-delta:positive}) and a precise control of the variations of the parameter $\lambda(t)$ (Lemma \ref{control:parameter}).
	\begin{lemma}\label{virialestimate-delta:positive} Let $u$ be a
		radial solution of \eqref{har} satisfying  \eqref{sub thresh case},
		\eqref{Noscattering} and \eqref{para-uniform-SGD}, and $\lambda(t)$
		be as in Lemma \ref{compactness}. Then there exists a constant $C$
		such that if $\; 0\leq \sigma < \tau$, we have
		\begin{equation*}
			\aligned \int^{\tau}_{\sigma} \delta (t) dt \leq C \left(
			\sup_{\sigma\leq t\leq \tau}\frac{1}{\lambda(t)^2} \right) \Big(
			\delta(\sigma) + \delta(\tau) \Big).
			\endaligned
		\end{equation*}
	\end{lemma}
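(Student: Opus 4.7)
The plan is to apply the localized virial identity of Lemma \ref{L:local virial} on $[\sigma,\tau]$ with a carefully chosen radius $R$, then absorb the $A_R$ error into the main $\delta$-term and control the boundary contributions. Setting $R := C_{0}\sup_{t\in[\sigma,\tau]}\frac{1}{\lambda(t)}$ for a large universal constant $C_{0}$, and noting that in the subcritical regime $\delta(t) = \|\nabla W\|_{2}^{2} - \|\nabla u(t)\|_{2}^{2}\geq 0$, the identity $E(u)=E(W)$ combined with Lemma \ref{L:local virial} yields $\partial_{t}^{2}V_{R}(t) = 4(p-1)\delta(t) + A_{R}(u(t))$. Integrating over $[\sigma,\tau]$ reduces the matter to two pointwise estimates: (i) $|A_{R}(u(t))| \leq (p-1)\delta(t)$, which absorbs the error into the left-hand side, and (ii) $|\partial_{t}V_{R}(t)| \leq CR^{2}\delta(t)$, which controls the boundary terms. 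Together these give $3(p-1)\int_{\sigma}^{\tau}\delta(t)\,dt \leq CR^{2}\bigl(\delta(\sigma)+\delta(\tau)\bigr)$, which is exactly the claim since $R^{2}=C_{0}^{2}\sup_{[\sigma,\tau]}\frac{1}{\lambda(t)^{2}}$.

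To prove (i), I would split on the size of $\delta(t)$ with a small fixed threshold $\delta_{1}$. In the large-$\delta$ regime $\delta(t)\geq \delta_{1}$, the pre-compactness of $K=\{(u(t))_{\lambda(t)}\}$ in $\dot H^{1}(\R^N)$ from Lemma \ref{compactness} furnishes, for every $\epsilon > 0$, a radius $\rho_{\epsilon}$ controlling the tails $\int_{|y|\geq \rho_{\epsilon}}|\nabla f(y)|^{2}\,dy$ and $\int_{|y|\geq\rho_{\epsilon}}(I_{\lambda}*|f|^{p})|f(y)|^{p}\,dy$ uniformly for $f\in K$. Undoing the scaling by $\lambda(t)$ and inspecting each contribution to $A_{R}$---the truncated-gradient piece $\int(4\phi''(|x|/R)-4)|\nabla u|^{2}\,dx$, the $\int(-\Delta\Delta\phi_{R})|u|^{2}\,dx$ piece (handled via Hardy on the annulus $R\leq |x|\leq 2R$), and the nonlocal Hartree tail (handled via Hardy--Littlewood--Sobolev)---yields $|A_{R}(u(t))|\leq C\epsilon$ whenever $R\lambda(t)\geq\rho_{\epsilon}$, which is $\leq (p-1)\delta(t)$ once $\epsilon\leq (p-1)\delta_{1}/C$. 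In the small-$\delta$ regime $\delta(t)<\delta_{1}$, the modulation of Lemma \ref{estmodulpara} combined with the identification $\lambda=\mu$ from \eqref{para-uniform-SGD} gives $u_{\theta(t),\mu(t)}=W+V$ with $\|V\|_{\dot H^{1}}\sim \delta(t)$; the scale-equivariance $A_{R}(u(t))=A_{R\mu(t)}(W+V)$ combined with the stationarity identity $A_{R\mu}(W)\equiv 0$ and a Pohozaev-style expansion mirroring Step 2 of the proof of Lemma \ref{L:secderiv} yields $|A_{R}(u(t))|\leq C\bigl[(R\mu(t))^{-(N-2)/2}+\delta(t)\bigr]\delta(t)$, which is $\leq (p-1)\delta(t)$ once $C_{0}$ is chosen large and $\delta_{1}$ small.

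Estimate (ii) uses the same dichotomy. When $\delta(t)\geq \delta_{1}$, the crude bound $|\partial_{t}V_{R}|\leq CR\int_{|x|\leq 2R}|u||\nabla u|\,dx\leq CR^{2}\|\nabla u\|_{2}^{2}\leq CR^{2}$ (via Cauchy--Schwarz, the Sobolev embedding on the ball of radius $2R$, and $\|\nabla u\|_{2}\leq\|\nabla W\|_{2}$) is at most $(C/\delta_{1})R^{2}\delta(t)$. When $\delta(t)<\delta_{1}$, the rescaling $y=\mu(t)x$ converts $\partial_{t}V_{R}$ into $\tfrac{2R}{\mu(t)}\Im\int\bar v(y)\,\nabla v(y)\cdot\nabla\phi(y/(R\mu(t)))\,dy$ with $v=W+V$; the real-valuedness of $W$ kills the $W\nabla W$ contribution, and the cross pieces $W\nabla V$, $\bar V\nabla W$, $\bar V\nabla V$ are each bounded by $C(R\mu(t))\|V\|_{\dot H^{1}}$ using H\"older against $\nabla\phi$, which is supported in $|y|\leq 2R\mu(t)$; this yields $|\partial_{t}V_{R}|\leq CR^{2}\delta(t)$. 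The main obstacle is harmonizing the two regimes in (i) with a single universal constant $C_{0}$, so that both the compactness- and modulation-based bounds serve as absorption; the nonlocal Hartree tail $\iint_{|x|\geq R}I_{\lambda}(x-y)|u(x)|^{p}|u(y)|^{p}\,dx\,dy$ in $A_{R}$ must be tracked through both the HLS argument and the modulational expansion around $W$, which is more delicate than in the pure NLS case.
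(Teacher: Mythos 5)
Your proposal is correct and follows essentially the same route as the paper: the authors write $\partial_t^2 V_R(t)=4(p-1)\delta(t)+A_R(t)$ via the local virial identity and the energy identity $E(u)=E(W)=\frac{p-1}{2p}\|\nabla W\|_{L^2}^2$, absorb $A_R$ into the main term by splitting on $\delta(t)\geq\delta_0$ (compactness of $\overline{K}$) versus $\delta(t)<\delta_0$ (Step~2 of Lemma~\ref{L:secderiv} with the modulation decomposition and $\lambda=\mu$), invoke the analogue of Lemma~\ref{L:firder} to bound the boundary term $|\partial_t V_R(t)|\leq C_0R^2\delta(t)$, take $R=R_2\sup_{[\sigma,\tau]}\lambda(t)^{-1}$, and integrate over $[\sigma,\tau]$. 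Your version differs only cosmetically (you absorb $|A_R|\leq(p-1)\delta$ rather than retaining $\partial_t^2 V_R\geq 2(p-1)\delta$), and you are slightly more explicit about the Hardy/Sobolev estimates in the large-$\delta$ bound for $\partial_t V_R$ and about the scale-equivariance $A_R(u(t))=A_{R\mu(t)}(W+V)$ and the identity $A_{R\mu}(W)\equiv 0$, but the decomposition, the key lemmas, and the final integration are the same.
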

	The proof is analogue to that of Lemma 6.7 in \cite{MiaoWX:dynamic gHartree}. For the sake of completeness, We give the proof.
	\begin{proof} 
		For $R>0$, Let us consider the function $V_R(t)$ defined as in
		\eqref{localV}.
		
		By the same estimate as that in Lemma \ref{L:firder}, there is a
		constant $C_0$ independent of $t\geq 0$ such that
		\begin{align*}
			\Big| \partial_t V_R(t) \Big| \leq C_0 R^2 \delta(t).
		\end{align*}
		
		Now we show that if $\epsilon>0$, there exists $R_{\epsilon}$ such
		that for any $R \geq R_{\epsilon}/\lambda(t)$, then
		\begin{align}\label{Sderiv:subcase}
			\partial^2_t V_R(t) \geq  (4(p-1)-\epsilon) \delta(t).
		\end{align}
		Indeed, by Lemma \ref{L:local virial} and $E(u)=E(W) = \frac{p-1}{2p}\big\|\nabla
		W \big\|^2_{L^2}$, we have
		\begin{align*}
			\partial^2_t V_R(t)=&  4(p-1) \delta(t) + A_R(t),
		\end{align*}
		where $A_R\big(u(t)\big)$ is defined in Lemma \ref{L:local virial}.
		
		To prove \eqref{Sderiv:subcase}, it suffices to show that if
		$\epsilon>0$, there exists $R_{\epsilon}$ such that for any $R \geq
		R_{\epsilon}/\lambda(t)$,
		\begin{align*}
			\Big| A_R(t) \Big|\lesssim \epsilon
			\delta(t).
		\end{align*}
		
		For the case when $\delta(t)\geq \delta_0$. As the estimate
		\eqref{smalremainder}, we can use the compactness of $\overline{K}$
		in $\dot H^1$ to show that for any $t\geq 0$, $\epsilon>0$, there
		exists $\rho_{\epsilon}>0$, such that for any $R\geq
		\rho_{\epsilon}/\lambda(t)$, we have
		\begin{align*}
			\big|A_R(t) \big|\leq \epsilon\lesssim \epsilon \delta(t).
		\end{align*}
		
		For the case when $\delta(t)< \delta_0$. As the proof of Step 2 in
		Lemma \ref{L:secderiv}, we can show that for any $t\geq 0$,
		$\rho>1$, there exists $C>0$ such that for any $R\geq
		\rho/\lambda(t)$,  we have
		\begin{align*}
			\big|A_R(t) \big|\leq C\left( \rho^{-\frac{N-2}{2}} \delta(t)+
			\delta(t)^2\right),
		\end{align*}
		which implies \eqref{Sderiv:subcase} if choosing $\rho$ large
		enough.
		
		By \eqref{Sderiv:subcase}, there exists $R_2$ such that for any
		$R\geq R_2/\lambda(t)$,
		\begin{align*}
			\partial^2_t V_R(t) \geq 2(p-1) \delta(t).
		\end{align*}
		
		Finally, if taking $R = R_2 \displaystyle \sup_{\sigma\leq t \leq
			\tau} \left(\frac{1}{\lambda(t)}\right),$ and integrating between
		$\sigma$ and $\tau$, we have
		\begin{align*}
			2(p-1)\int^{\tau}_{\sigma} \delta(t) dt \leq \int^{\tau}_{\sigma}
			\partial^2_t V_R(t) dt = \partial_t V_R(\tau)-\partial_t V_R(\sigma)
			\leq C R^2 \Big(\delta(\tau) + \delta(\sigma) \Big),
		\end{align*}
		which implies the result. 
	\end{proof}
	
	In order to get the exponential convergence of the gradient variant, we need to control the variations of $\frac{1}{\lambda(t)^2}$.
	
	\begin{lemma}[\cite{MiaoWX:dynamic gHartree}]\label{control:parameter}
		Let $u$ be a radial solution of \eqref{har} satisfying \eqref{sub
			thresh case}, \eqref{Noscattering} and \eqref{para-uniform-SGD},
		and $\lambda(t)$ be as in Lemma \ref{compactness}. Then there is a
		constant $C_0>0$ such that for any $ \sigma, \tau>0$ with
		$\sigma+\frac{1}{\lambda(\sigma)^2} \leq \tau$, we have
		\begin{equation*}\label{parcon}
			\aligned \left|\frac{1}{\lambda(\tau)^2}
			-\frac{1}{\lambda(\sigma)^2} \right|\leq C_0 \int^{\tau}_{\sigma}
			\delta(t) dt.
			\endaligned
		\end{equation*}
	\end{lemma}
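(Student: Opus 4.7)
The approach is to partition $[\sigma,\tau]$ into portions on which $\delta(t) < \delta_0$ (so the modulation decomposition from Lemma~\ref{choice:spatialtranslation} is available) and portions on which $\delta(t) \geq \delta_0$ (so one exploits pre-compactness of $\overline{K}$ in $\dot H^1_{\mathrm{rad}}$ from Lemma~\ref{compactness}). On the good set, the normalization \eqref{para-uniform-SGD} identifies $\lambda(t)$ with the modulation parameter $\mu(t)$, so a differential inequality closes the estimate directly. On the bad set, the lower bound on $\delta$ gives a bulk control on time length, while compactness forces $\lambda$ to be slowly-varying on the natural scale $1/\lambda^2$.

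On any sub-interval $(a,b) \subset D_{\delta_0}$, I would differentiate $1/\mu^2$ and use the pointwise bound $|\mu'/\mu| \leq C\mu^2 \delta$ from Lemma~\ref{estmodulpara} to obtain $\big|\frac{d}{dt}(1/\mu(t)^2)\big| \leq 2C\delta(t)$. Integration yields $|1/\mu(b)^2 - 1/\mu(a)^2| \leq 2C\int_a^b \delta(t)\,dt$, the required contribution on good pieces. On a sub-interval $(a,b)$ where $\delta \geq \delta_0$, I would use the bulk bound $b-a \leq \delta_0^{-1}\int_a^b \delta(t)\,dt$ together with the following compactness consequence: by Lemma~\ref{compactness} the rescaled orbit $\{(u(t))_{\lambda(t)}\}$ is pre-compact in $\dot H^1_{\mathrm{rad}}$, and continuity of the flow in $\dot H^1$ yields a uniform constant $\eta_0 > 0$ such that $\lambda(t_0+s/\lambda(t_0)^2) \sim \lambda(t_0)$ for $|s| \leq \eta_0$, uniformly in $t_0$. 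Covering the bad set by intervals of natural length $\eta_0/\lambda^2$ and telescoping will then produce $|\Delta(1/\lambda^2)| \leq C(b-a) \leq C\delta_0^{-1}\int_a^b \delta(t)\,dt$.

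The main obstacle is the interface between good and bad regions: one must check that $\mu(t)$ from the modulation decomposition matches $\lambda(t)$ consistently at each transition (ensured by \eqref{para-uniform-SGD} and continuity), and that no uncontrolled error accumulates when stitching sub-intervals together. The standing hypothesis $\sigma + 1/\lambda(\sigma)^2 \leq \tau$ is precisely what guarantees that the first covering sub-interval of a bad excursion starting at $\sigma$ fits inside $[\sigma,\tau]$, so that the compactness step produces a bound with constant independent of $\sigma,\tau$. Without this hypothesis one could have a degenerate short initial excursion into the bad region whose $\lambda$-variation is not controlled by the (potentially very small) integral of $\delta$ over that short window.
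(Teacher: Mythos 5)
The paper does not reprove this lemma; it imports it from \cite{MiaoWX:dynamic gHartree}, so there is no in-paper proof to compare against. Judged on its own, your argument has the right broad shape (differential inequality via \eqref{modulparaderiva} where $\delta<\delta_0$, compactness where $\delta\geq\delta_0$, and \eqref{para-uniform-SGD} to identify $\mu$ with $\lambda$ on the good set), and the good-set computation $\bigl|\tfrac{d}{dt}\lambda(t)^{-2}\bigr|=2|\mu'/\mu|\mu^{-2}\leq 2C\delta(t)$ is correct.

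The gap is in the bad-set step. Pre-compactness of $\overline{K}$ together with $\dot H^1$-continuity of the flow gives only a two-sided \emph{comparability} $\lambda(t_0+s/\lambda(t_0)^2)\sim\lambda(t_0)$ for $|s|\leq\eta_0$ (constants bounded away from $0$ and $\infty$), not a modulus of continuity that tends to $1$ as $s\to 0$ uniformly in $t_0$. Consequently, on a \emph{short} bad excursion $(a,b)$ with $b-a\ll\eta_0/\lambda(a)^2$, the comparability only yields $|\lambda(b)^{-2}-\lambda(a)^{-2}|\lesssim\lambda(a)^{-2}$, which need not be $\lesssim b-a$; the claimed telescoping estimate $|\Delta(\lambda^{-2})|\leq C(b-a)$ breaks down exactly on the last (truncated) covering interval of each bad excursion. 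You flag a version of this issue at the left endpoint $\sigma$ and invoke the hypothesis $\sigma+\lambda(\sigma)^{-2}\leq\tau$, but that hypothesis only ensures room at $\sigma$; interior bad excursions can be arbitrarily short relative to $\eta_0/\lambda^2$ and your argument does not control their contribution. To close this one needs extra input at the good/bad interface: for instance, since $\delta$ equals $\delta_0$ at the boundary and, on the good side, $\delta(t)\approx|\alpha(t)|$ with $|\alpha'|\lesssim\mu^2\delta$ (Lemma~\ref{estmodulpara}), $\delta$ remains bounded below on a full time window of length $\sim\lambda^{-2}$ adjacent to the excursion; combined with $\lambda\sim$ const on that window, this supplies the missing lower bound on $\int\delta$ against which to compare the $O(\lambda^{-2})$ jump. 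Without an argument of this kind (or a replacement of the covering step by a monotone localized quantity whose variation is directly controlled by $\int\delta$), the bad-set estimate as written is not justified.
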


	
	Thus, we have
	
	\begin{lemma}[\cite{MiaoWX:dynamic gHartree}] Let $u$ be a
		radial solution of \eqref{har} satisfying  \eqref{sub thresh case},
		\eqref{Noscattering} and \eqref{para-uniform-SGD}, and $\lambda(t)$
		be as in Lemma \ref{compactness}. Then there exists $C$   such that
		for all $t\geq 0$,
		\begin{align}\label{ExpDecay:Inte}
			\int^{+\infty}_t  \delta(s)ds   \leq & \; C  e^{-ct}.
		\end{align}
	\end{lemma}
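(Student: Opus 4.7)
The strategy is to establish the differential inequality $F(t)\leq K\,\delta(t)=-K\,F'(t)$ on some interval $[t_0,+\infty)$ for $F(t):=\int_t^{+\infty}\delta(s)\,ds$, and then integrate. The three ingredients are the virial-type bound in Lemma \ref{virialestimate-delta:positive}, the slow-variation bound in Lemma \ref{control:parameter}, and the sequence $t_n\to+\infty$ with $\delta(t_n)\to 0$ from Corollary \ref{seqconv}. Writing $f(s):=1/\lambda(s)^2$, the first lemma says $\int_\sigma^\tau\delta\lesssim (\sup_{[\sigma,\tau]}f)(\delta(\sigma)+\delta(\tau))$ and the second says $f$ is almost constant on intervals whose $\int\delta$-mass is small; the two together should close a bootstrap, provided we can pin down the endpoints.

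\textbf{Step 1 (choice of $t_0$ and truncation).} Let $C,C_0$ be the constants of Lemmas \ref{virialestimate-delta:positive} and \ref{control:parameter}, fix $\eta>0$ so small that $CC_0\eta\leq 1/2$, and use Corollary \ref{seqconv} to choose $t_0$ with $\delta(t_0)\leq\eta/2$ and a sequence $t_n\to+\infty$ with $t_n\geq t_0+f(t_0)$ and $\delta(t_n)\leq\eta/2$. Set $M_n:=\sup_{[t_0,t_n]}f$, which is finite by continuity of $\lambda$.

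\textbf{Step 2 (uniform bound on $M_n$).} On the short initial window, $M_0:=\max_{[t_0,t_0+f(t_0)]}f<+\infty$ by continuity. For $\tau\in[t_0+f(t_0),t_n]$, Lemma \ref{control:parameter} gives $f(\tau)\leq f(t_0)+C_0\int_{t_0}^{t_n}\delta$, so
\begin{equation*}
M_n\;\leq\;M_0+f(t_0)+C_0\!\int_{t_0}^{t_n}\!\delta.
\end{equation*}
Feeding this into Lemma \ref{virialestimate-delta:positive} applied to $[t_0,t_n]$,
\begin{equation*}
\int_{t_0}^{t_n}\!\delta\;\leq\;C\,M_n(\delta(t_0)+\delta(t_n))\;\leq\;CM_n\eta,
\end{equation*}
hence $M_n\leq M_0+f(t_0)+CC_0\eta\,M_n$, and the choice of $\eta$ yields $M_n\leq 2(M_0+f(t_0))=:M$ uniformly in $n$. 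In particular $\int_{t_0}^{+\infty}\delta<+\infty$ and $\sup_{[t_0,+\infty)}f\leq M$.

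\textbf{Step 3 (differential inequality and conclusion).} For any $t\in[t_0,t_n]$, Lemma \ref{virialestimate-delta:positive} with $\sup_{[t,t_n]}f\leq M$ gives $\int_t^{t_n}\delta\leq CM(\delta(t)+\delta(t_n))$. Sending $n\to+\infty$ and using $\delta(t_n)\to 0$ yields $F(t)\leq K\delta(t)$ with $K:=CM$. Since $F'(t)=-\delta(t)$ for a.e.\ $t$, this reads $F(t)\leq -KF'(t)$, so $(e^{t/K}F(t))'\leq 0$ and $F(t)\leq F(t_0)\,e^{-(t-t_0)/K}$ for $t\geq t_0$. For $t\in[0,t_0]$, the energy trapping in Lemma \ref{energytrapp} gives $\delta\leq\|\nabla W\|_2^2$, so $F(t)\leq F(0)<+\infty$, and enlarging the constant absorbs the factor $e^{ct_0}$, giving \eqref{ExpDecay:Inte}.

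\textbf{Main obstacle.} The delicate step is Step 2, the uniform control of $M_n$. It is precisely here that the two lemmas are linked in a self-referential loop: the virial estimate bounds $\int\delta$ by $\sup f$, while the slow-variation estimate bounds $\sup f$ by $\int\delta$. One must exploit that both endpoints $t_0$ and $t_n$ can be chosen in the distinguished sequence of Corollary \ref{seqconv}, so that a single smallness threshold $\eta$ (not a smallness of $\delta$ at \emph{every} time, which we do not have) is enough to close the bootstrap. Once $\sup_{[t_0,+\infty)}f<+\infty$ is secured, the remainder of the argument is a standard Gronwall integration.
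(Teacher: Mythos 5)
Your argument is correct and follows the same route as the cited Duyckaerts--Merle / Miao--Wu--Xu proof: combine the virial estimate of Lemma \ref{virialestimate-delta:positive} with the slow-variation bound of Lemma \ref{control:parameter} to close a smallness bootstrap at endpoints chosen along the sequence of Corollary \ref{seqconv}, which yields a uniform upper bound on $1/\lambda(t)^2$ and then the Gronwall inequality $F(t)\le K\delta(t)=-KF'(t)$. The key algebraic closing in Step 2 is sound because both inequalities hold unconditionally and $M_n$ is finite a priori, so the factor $1-CC_0\eta\ge 1/2$ can be safely absorbed.
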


	\subsection{Convergence of the modulation parameters}
	Now by Lemma \ref{virialestimate-delta:positive}, Lemma \ref{control:parameter} and \eqref{ExpDecay:Inte}, we can get that there exists $ \lambda_0 \in (0, +\infty)$ such that \begin{equation}\label{lambdalimit}
		\lim_{t\rightarrow+\infty}\lambda(t) =\lambda_{0} .
	\end{equation} 
	The argument is standard, and we can refer to Section 6.4 in \cite{MiaoWX:dynamic gHartree} for details.
	
	By \eqref{ExpDecay:Inte}, there exists $t_n\rightarrow +\infty$ such
	that
	$$\delta(t_n) \rightarrow 0.$$
	Fix such $\{t_n\}_{n\in \N}$. The similar proof as that of
	\eqref{diff:Gradient}, we can show that
	\begin{equation}\label{deltalimit}
		\aligned \lim_{t\rightarrow +\infty} \delta(t)=0.
		\endaligned
	\end{equation}
	
	Now for large $t$, $\alpha(t)$ is well defined by Lemma
	\ref{estmodulpara}, \eqref{para-uniform-SGD}, \eqref{lambdalimit},
	and \eqref{deltalimit}. Furthermore, we have
	\begin{align*}
		\delta(t) \approx \big| \alpha(t) \big| = & \big| \alpha(t)
		-\alpha(+\infty) \big| \notag\\
		\leq & C \int^{+\infty}_t \big| \alpha'(s)\big| ds \leq C
		\int^{+\infty}_t  \mu^2(s) \delta(s) ds \leq C e^{-ct}.
		\label{expdecay:pointwise}
	\end{align*}
	
	Finally, by Lemma \ref{estmodulpara}, \eqref{para-uniform-SGD},
	and \eqref{lambdalimit},  there
	exists $\theta_{\infty}$ such that for large $t$, we have
	\begin{align*}
		\Big| \theta(t) - \theta_{\infty}\Big| \leq C \int^{\infty}_t \big|
		\theta'(s) \big| ds \leq C \int^{+\infty}_t  \mu^2(s) \delta(s) ds
		\leq C e^{-ct}.
	\end{align*}

	\subsection{Scattering for the negative times.}
	\label{S:sub:scattering} By Corollary \ref{global}, we know that $u$
	is globally well-defined. Assume that $u$ does not scatter for the
	negative time. Then by the analogue estimates as those from Subsection
	\ref{subs:compact} to Subsection \ref{subs:expconv}, we have
	\begin{enumerate}
		\item[\rm (a)] there exists $\lambda(t)$, defined for $t\in \R$, such
		that the set \begin{align*} K:=\Big\{\big(u(t)\big)_{\lambda(t)},\;
			t\in \R \Big\}
		\end{align*}
		is pre-compact in $\dot H^1_{\text{rad}}(\R^N)$.
		
		\item[\rm (b)] there exists an decreasing sequence $t'_n \rightarrow
		-\infty$ as $n\rightarrow +\infty$, such that
		\begin{align*}
			\lim_{n\rightarrow +\infty} \delta(t'_n)=0.
		\end{align*}
		
		\item[\rm (c)] there is a constant $C>0$ such that if $\; -\infty <
		\sigma < \tau < \infty$,
		\begin{equation*}
			\aligned \int^{\tau}_{\sigma} \delta (t) dt \leq C \left(
			\sup_{\sigma\leq t\leq \tau}\frac{1}{\lambda(t)^2} \right) \big(
			\delta(\sigma) + \delta(\tau) \big).
			\endaligned
		\end{equation*}
		
		\item[\rm (d)] there is a constant $C>0$
		such that for any $\sigma, \tau $ with
		$\sigma+\frac{1}{\lambda(\sigma)^2} \leq \tau$, we have
		\begin{equation*}
			\aligned \left|\frac{1}{\lambda(\tau)^2}
			-\frac{1}{\lambda(\sigma)^2} \right|\leq C \int^{\tau}_{\sigma}
			\delta(t) dt.
			\endaligned
		\end{equation*}
	\end{enumerate}
	From (b)-(d), we know that
	\begin{align*}
		\lim_{t\rightarrow \pm \infty} \delta(t)=0
	\end{align*}
	and $\frac{1}{\lambda(t)^2}$ is bounded for $t\in \R$. Hence we have
	\begin{align*}
		\int^{\tau}_{\sigma} \delta(s) \;ds\leq C
		\Big(\delta(\sigma)+\delta(\tau) \Big).
	\end{align*}
	Let $\sigma \rightarrow -\infty$ and $\tau\rightarrow +\infty$, we
	have
	$$\delta(t)=0,\;  \forall \; t\in \R.$$
	Thus $u=W$ up to the symmetry of \eqref{har}, which  contradicts with
	$\big\|u_0\big\|_{\dot H^1} < \big\|W\big\|_{\dot H^1}$. \qed


	\section{Uniqueness and the classification result}\label{S:uniqueness}
	
	In this section, we follows the arguments in \cite{DuyMerle:NLS:ThresholdSolution, DuyMerle:NLW:ThresholdSolution} and \cite{MiaoWX:dynamic gHartree}. 
	
	\subsection{Exponentially small solutions of the linearized equation.} 
	
	Consider the radial functions $v(t)$ and $g(t)$
	\begin{equation*}
		\aligned v\in C^0([t_0, +\infty), \dot H^1),\;\;  g\in C^0([t_0,
		+\infty), L^{\frac{2N}{N+2}})\;\;\text{and}\;\; \nabla g \in N(t_0,
		+\infty)
		\endaligned
	\end{equation*}
	such that
	\begin{align*}
		\partial_t v + \mathcal{L}v =g, & \;\; (x,t)\in \R^N  \times [t_0,
		+\infty), \\
		\label{baseestimate} \big\|v(t)\big\|_{\dot H^1} \leq C e^{-c_1t}, &
		\;\;\;\; \big\|g(t)\big\|_{L^{\frac{2N}{N+2}}(\R^N)}+  \big\| \nabla
		g(t)\big\|_{N(t, +\infty)} \leq  Ce^{-c_2 t},
	\end{align*}
	where $0<c_1<c_2$. For any $c >0$, we denote by $c^-$ a positive
	number arbitrary close to $c$ and such that $0<c^{-}<c$.
	
	By the Strichartz estimate and Lemma \ref{summation}, we have
	\begin{lemma}[\cite{DuyMerle:NLS:ThresholdSolution, DuyMerle:NLW:ThresholdSolution, MiaoWX:dynamic gHartree}]\label{vstrich}
		Under the above assumptions, then for any $(q,r)$ with $\frac2q=N
		(\frac12 -\frac1r ) $, $q\in [2, +\infty]$, we have
		\begin{align*}
			\big\|v\big\|_{l(t,+\infty)}+\big\|\nabla v\big\|_{L^q(t,+\infty;
				L^r)} \leq C e^{-c_1t}.
		\end{align*}
	\end{lemma}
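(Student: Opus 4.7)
The plan is to rewrite the abstract linearized equation as a standard Schrödinger equation with a potential and source, apply the inhomogeneous Strichartz estimate from Lemma \ref{Striestimate} on short time windows, absorb the potential piece using the smoothing estimate \eqref{linearestimate} in Lemma \ref{linearoperator:prelimestimate}, and finally patch the windows together via the summation argument in Lemma \ref{summation}.

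Concretely, the identity $\partial_t v + \mathcal{L}v = g$ with $\mathcal{L}$ as in \eqref{linearequat} is equivalent to an equation of the form $i\partial_t v + \Delta v = Vv - ig$, with $V$ as in \eqref{linearterm}. Thus $v$ is (up to the harmless factor $-i$ in front of $g$) a solution of \eqref{lin} with right-hand side $Vv - ig$. Fixing an admissible pair $(q,r)$ and an interval $I = [t,t+\tau_0]$ with $\tau_0 \leq 1$ to be determined, Lemma \ref{Striestimate} yields
\begin{equation*}
\big\|v\big\|_{l(I)} + \big\|\nabla v\big\|_{L^q(I;L^r)} \leq C\big\|v(t)\big\|_{\dot H^1} + C\big\|\nabla(Vv)\big\|_{N(I)} + C\big\|\nabla g\big\|_{N(I)}.
\end{equation*}

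Next I would apply the first bound in Lemma \ref{linearoperator:prelimestimate}, namely $\|\nabla(Vv)\|_{N(I)} \lesssim |I|^{(p-1)/p}\|v\|_{l(I)}$, and choose $\tau_0$ small enough that the resulting constant absorbs the left-hand side. Using the hypotheses $\|v(t)\|_{\dot H^1} \leq Ce^{-c_1 t}$ and $\|\nabla g\|_{N(t,+\infty)} \leq Ce^{-c_2 t}$, together with $c_1 < c_2$ so that $e^{-c_2 t} \leq e^{-c_1 t}$, one obtains a short-window bound
\begin{equation*}
\big\|v\big\|_{l(t,t+\tau_0)} + \big\|\nabla v\big\|_{L^q(t,t+\tau_0;L^r)} \leq C e^{-c_1 t}, \quad t \geq t_0.
\end{equation*}
This is exactly the hypothesis \eqref{small.tau} of Lemma \ref{summation} with $a_0 = -c_1 < 0$ and the vector-valued norm $\|\cdot\|_{l(\cdot)} + \|\nabla \cdot\|_{L^q(\cdot;L^r)}$. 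Applying \eqref{conclu.summation} produces
\begin{equation*}
\big\|v\big\|_{l(t,+\infty)} + \big\|\nabla v\big\|_{L^q(t,+\infty;L^r)} \leq \frac{C e^{-c_1 t}}{1 - e^{-c_1 \tau_0}} \leq C' e^{-c_1 t},
\end{equation*}
which is the desired bound.

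The argument is essentially bookkeeping; there is no deep obstacle. The only care is in the choice of $\tau_0$: it must be chosen uniformly in $t$ small enough that the potential term $\|\nabla(Vv)\|_{N(I)}$ is genuinely absorbed regardless of $t$, which works because the bound $|I|^{(p-1)/p}$ depends only on $|I| = \tau_0$ and not on $t$. Everything else—turning the one-window decay into a global decay on $[t,+\infty)$—is precisely what Lemma \ref{summation} is designed to do, and the hypothesis $c_1 < c_2$ is used exactly once, to ensure the source term does not dominate the $v(t)$-initial piece in the short-window Strichartz inequality.
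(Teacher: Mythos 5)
Your proof is correct and takes essentially the same route the paper intends: rewrite $\partial_t v + \mathcal{L}v = g$ as a Schr\"odinger equation with potential $V$, apply Strichartz on short windows $[t,t+\tau_0]$, absorb the $\nabla(Vv)$ piece via \eqref{linearestimate} by choosing $\tau_0$ small uniformly in $t$, and sum the windows with Lemma \ref{summation}. This is precisely the argument the paper gives explicitly for the analogous Lemma \ref{linearstrich}, specialized to the present abstract linear setting (with the continuity argument no longer needed since the source $g$ is given rather than nonlinear in $v$), and you correctly identify the single use of $c_1<c_2$.
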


	Now we show that $v$ must decay almost as fast as $g$, except in the
	direction $\YYY_+$ where the decay is of order $e^{-e_0t}$.

	\begin{proposition}[\cite{DuyMerle:NLS:ThresholdSolution, DuyMerle:NLW:ThresholdSolution, MiaoWX:dynamic gHartree}]\label{improved decay}
		The following self-improving estimates hold.
		\begin{enumerate}
			\item[\rm(a)] if $c_2 \leq e_0$ or $e_0 < c_1$, then
			\begin{equation*}
				\aligned \big\|v(t)\big\|_{\dot H^1}  \leq C e^{-c_2^{-}t},
				\endaligned
			\end{equation*}
			\item[\rm(b)] If $c_2 > e_0$, then there exists $a\in \R$
			such that
			\begin{equation*}
				\aligned \big\|v(t)-ae^{-e_0t}\mathcal{Y}_+\big\|_{\dot H^1}  \leq C
				e^{-c_2^{-}t}.
				\endaligned
			\end{equation*}
		\end{enumerate}
	\end{proposition}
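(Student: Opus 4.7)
The plan is to adapt the Duyckaerts--Merle spectral strategy of \cite{DuyMerle:NLS:ThresholdSolution,MiaoWX:dynamic gHartree}: decompose $v$ along the discrete modes of $\mathcal{L}$ using the bilinear form $B$, derive scalar ODEs for the unstable/stable coefficients, control the coercive part through the energy identity for $\Phi$, treat the kernel directions separately, and bootstrap the a priori bound into the sharp rate.

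First I would decompose
\begin{equation*}
v(t)=\alpha_+(t)\mathcal{Y}_++\alpha_-(t)\mathcal{Y}_-+\gamma_1(t)\,iW+\gamma_2(t)\,\widetilde{W}+w(t),
\end{equation*}
where $\alpha_\pm(t):=B(\mathcal{Y}_\mp,v(t))/B(\mathcal{Y}_\mp,\mathcal{Y}_\pm)$ (well defined thanks to \eqref{B-orthEigen}), $\gamma_{1,2}$ are the $\dot H^1$-orthogonal projections on $iW,\widetilde W$, and $w(t)\in G_\bot$ by construction. Pairing $\partial_t v+\mathcal{L}v=g$ with $\mathcal{Y}_\mp$ under $B$ and using $\mathcal{L}\mathcal{Y}_\mp=\mp e_0\mathcal{Y}_\mp$ together with the skew-symmetry in \eqref{B-pro} yields the decoupled linear ODEs
\begin{equation*}
\alpha_\pm'(t)\pm e_0\alpha_\pm(t)=f_\pm(t),\qquad |f_\pm(t)|\lesssim\|g(t)\|_{L^{2N/(N+2)}}\lesssim e^{-c_2 t},
\end{equation*}
the pairings being pointwise well defined because $\mathcal{Y}_\mp\in\mathcal{S}(\R^N)$. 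Variation of constants gives $|\alpha_-(t)|\lesssim e^{-c_2 t}$ by backward integration from $+\infty$ (the homogeneous tail $e^{e_0(t-T)}\alpha_-(T)$ dies thanks to $|\alpha_-(T)|\lesssim e^{-c_1 T}\to 0$). For $\alpha_+$, I split into three regimes: if $e_0<c_1$, backward integration is again licit and gives $|\alpha_+(t)|\lesssim e^{-c_2 t}$; if $c_2\leq e_0$, forward integration produces a homogeneous contribution $\lesssim e^{-e_0 t}$ and an inhomogeneous one $\lesssim e^{-c_2 t}$ (with a linear $t$-factor at the resonant case $c_2=e_0$ absorbed into the weakening from $c_2$ to $c_2^-$); if $c_2>e_0$ (case (b)), the integral $\int^{+\infty}e^{e_0 s}|f_+(s)|\,ds$ converges, the limit $a:=\lim_{t\to\infty}e^{e_0 t}\alpha_+(t)$ exists, and $|\alpha_+(t)-ae^{-e_0 t}|\lesssim e^{-c_2 t}$.

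For the $G_\bot$-component $w$ I would exploit the conservation-type identity
\begin{equation*}
\frac{d}{dt}\Phi(v(t))=2B(g(t),v(t)),
\end{equation*}
which follows from $B(\mathcal{L}v,v)=0$ (skew-symmetry in \eqref{B-pro}). Integration from $t$ to $+\infty$ combined with $\Phi(v(T))\to 0$ and the dual estimate $|B(g,v)|\lesssim\|g\|_{L^{2N/(N+2)}}\|v\|_{\dot H^1}$ (Hardy--Littlewood--Sobolev) gives $|\Phi(v(t))|\lesssim e^{-(c_1+c_2)t}$. Since $\Phi(\mathcal{Y}_\pm)=\Phi(iW)=\Phi(\widetilde W)=0$ (cf.~\eqref{Phi}), $B(\mathcal{Y}_\pm,iW)=B(\mathcal{Y}_\pm,\widetilde W)=0$ (a consequence of $iW,\widetilde W\in\ker\mathcal{L}$ via \eqref{B-pro}), and $B(iW,w)=B(\widetilde W,w)=0$ (from $L_-W=L_+\widetilde W=0$), the decomposition collapses to
\begin{equation*}
\Phi(v(t))=2\alpha_+(t)\alpha_-(t)B(\mathcal{Y}_+,\mathcal{Y}_-)+\Phi(w(t)).
\end{equation*}
The coercivity $\Phi(w)\geq c\|w\|_{\dot H^1}^2$ of Proposition~\ref{spectral}(b), together with the bounds on $\alpha_\pm$, then yields $\|w(t)\|_{\dot H^1}\lesssim e^{-(c_1+c_2)t/2}$; a standard bootstrap (replacing the input rate $c_1$ by the improved rate $(c_1+c_2)/2$ in the same loop and iterating) drives the output rate to any $c_2^-<c_2$ in finitely many steps.

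Finally, the kernel coefficients $\gamma_{1,2}$ require extra care, since the natural ODEs for them involve $(iW,\mathcal{L}v)_{\dot H^1}$ and $(\widetilde W,\mathcal{L}v)_{\dot H^1}$, which are not manifestly small, so a direct pairing does not immediately furnish the $e^{-c_2 t}$ rate. I would treat them either by working on the generalized null subspace of $\mathcal{L}$ at $0$ with its dual basis and reducing $\gamma_i'$ to the projection of $g$ (which is $O(e^{-c_2 t})$), or---as is the standard device for the intended uniqueness application in Section~\ref{S:uniqueness}---by pre-modulating $v$ so that $\gamma_1\equiv\gamma_2\equiv 0$ from the outset, integrating $|\gamma_i(+\infty)|=0$ inherited from $\|v(+\infty)\|_{\dot H^1}=0$. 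Summing the four components then delivers (a) and (b). I expect the main technical obstacles to be the resonant subcase $c_2=e_0$ of (a), where the logarithmic factor produced by the $\alpha_+$ ODE must be absorbed into the passage from $c_2$ to $c_2^-$, and the precise control of the generalized kernel directions $iW,\widetilde W$, where the absence of a spectral gap at $0$ precludes a direct coercivity argument and forces one to rely on the Duhamel/modulation structure above.
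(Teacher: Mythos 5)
Your overall scaffolding is correct and matches the Duyckaerts--Merle template underlying Proposition~\ref{improved decay}: the ODEs $\alpha_\pm'\pm e_0\alpha_\pm=f_\pm$ with $|f_\pm|\lesssim\|g\|_{L^{2N/(N+2)}}$ are obtained exactly as you describe via $B(\mathcal{Y}_\mp,\cdot)$ and skew-symmetry, the energy identity $\frac{d}{dt}\Phi(v)=2B(g,v)$ is the right conservation law, the algebraic reduction $\Phi(v)=2\alpha_+\alpha_-B(\mathcal{Y}_+,\mathcal{Y}_-)+\Phi(w)$ is correct since $B(iW,\cdot)=B(\widetilde W,\cdot)=0$, and the coercivity of $\Phi$ on $G_\bot$ together with a bootstrap closes the $\alpha_\pm$ and $w$ components.

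However, the kernel coefficients $\gamma_1,\gamma_2$ are a genuine gap, and your two proposed remedies do not close it. Pre-modulation is not available here: Proposition~\ref{improved decay} is stated as an abstract estimate for \emph{any} $v,g$ satisfying the hypotheses, and modulating $v$ introduces new forcing terms $\theta'(t), \mu'(t)$ that would have to be estimated, which is circular. The ``dual basis'' idea is the right instinct, but the $\dot H^1$-orthogonal projections onto $iW$ and $\widetilde W$ do \emph{not} commute with $\mathcal{L}$ (the quantity $(iW,\mathcal{L}v)_{\dot H^1}=\int\nabla W\cdot\nabla L_+v_1$ is not zero in general), so the ODE for $\gamma_i$ in your decomposition picks up contributions of size $\|v\|_{\dot H^1}\sim e^{-c_1t}$, not $\|g\|\sim e^{-c_2t}$, and the bootstrap never improves the rate of the kernel part. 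The correct device is the Hamiltonian (symplectic) pairing rather than the $\dot H^1$ one: since $\mathcal{L}=J\,\mathrm{diag}(L_+,L_-)$, the functionals
\begin{equation*}
\ell_1(v)=\int_{\R^N}W\,v_1\,dx,\qquad\ell_2(v)=-\int_{\R^N}\widetilde W\,v_2\,dx
\end{equation*}
annihilate the range of $\mathcal{L}$, because $\ell_1(\mathcal{L}v)=-\int W L_-v_2=-\int(L_-W)v_2=0$ and $\ell_2(\mathcal{L}v)=-\int\widetilde W L_+v_1=-\int(L_+\widetilde W)v_1=0$ by self-adjointness and $L_-W=L_+\widetilde W=0$; moreover the $2\times2$ matrix $(\ell_i(iW),\ell_i(\widetilde W))$ is invertible since its off-diagonal entries are $\pm(W,\widetilde W)_{L^2}\neq0$. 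Pairing the equation with $\ell_i$ then gives $\frac{d}{dt}\ell_i(v)=\ell_i(g)=O(e^{-c_2t})$, and integrating from $t$ to $+\infty$ (using $\ell_i(v(\tau))\to0$) yields the missing $O(e^{-c_2t})$ bound on the kernel component. You should also flag that the pairings $\ell_i$ require some care in low dimensions ($N\leq 6$) where $W\notin L^{2N/(N+2)}$, but this is a technical point that the cited references handle; the structural point you were missing is that the projection onto $\ker\mathcal{L}$ must be taken with respect to the symplectic form, not the $\dot H^1$ inner product.

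Two smaller comments: the decomposition $v=\alpha_+\mathcal{Y}_++\alpha_-\mathcal{Y}_-+\gamma_1 iW+\gamma_2\widetilde W+w$ with $w\in G_\bot$ requires solving the full linear system for $(\alpha_\pm,\gamma_{1,2})$, not defining $\gamma_i$ as raw $\dot H^1$-projections of $v$ (otherwise $w\notin G_\bot$ in general); and in the resonant subcase $c_2=e_0$ your observation that the logarithmic loss is absorbed in passing from $c_2$ to $c_2^-$ is correct and is indeed why the statement is phrased with $c_2^-$ rather than $c_2$.
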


	\subsection{Uniqueness.}

	\begin{proposition}\label{uniqueness}
		Let $u$ be a radial solution of \eqref{har}, defined on $[t_0,
		+\infty)$, such that $E(u)=E(W)$ and
		\begin{equation}\label{uconvGSexponential}
			\aligned \exists \;c, C>0,\; \big\|u-W \big\|_{\dot H^1} \leq C
			e^{-ct}, \;\forall\; t\geq t_0 .
			\endaligned
		\end{equation}
		Then there exists a unique $a \in \R$ such that $u=U^a$ is the
		solution of \eqref{har} defined in Proposition
		\ref{existence:thresholdsolution}.
	\end{proposition}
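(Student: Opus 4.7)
\textbf{Proof plan for Proposition \ref{uniqueness}.} Write $h = u - W$. Then $h$ satisfies the linearized equation $\partial_t h + \mathcal{L} h = -iR(h)$ with $R$ as in \eqref{remainder}, and by hypothesis $\|h(t)\|_{\dot H^1} \leq Ce^{-ct}$. Since $R$ is at least quadratic in $h$, Lemma \ref{linearoperator:prelimestimate} gives bounds of the form $\|R(h(t))\|_{L^{2N/(N+2)}} + \|\nabla R(h)\|_{N(t,t+1)} \lesssim e^{-2ct}$ for large $t$, using also the a priori Strichartz control provided by Lemma \ref{vstrich} applied to $v = h$. Thus the source term satisfies the hypotheses of Proposition \ref{improved decay} with $c_1 = c$ and $c_2 = 2c$.

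\textbf{Bootstrap to extract the leading term.} I would iterate Proposition \ref{improved decay}(a): so long as the current decay rate $c_1$ of $h$ satisfies $2c_1 \leq e_0$, the conclusion upgrades it to $(2c_1)^-$, and refeeding this into the estimate of $R(h)$ doubles the source decay rate. After finitely many iterations we reach a step at which the source decays faster than $e^{-e_0 t}$, and then Proposition \ref{improved decay}(b) produces some $a \in \mathbb{R}$ with
\begin{equation*}
    \bigl\| h(t) - a e^{-e_0 t}\mathcal{Y}_+\bigr\|_{\dot H^1} \leq C e^{-c_3 t}, \qquad c_3 > e_0.
\end{equation*}
The real-valuedness of $a$ (rather than complex) comes from the fact that $u$ is considered as a radial solution with $\mathcal{Y}_+ = \overline{\mathcal{Y}_-}$, so only the real combination survives.

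\textbf{Comparison with $U^a$.} Let $w = u - U^a$. By Proposition \ref{existence:thresholdsolution}, $\|U^a - W - ae^{-e_0 t}\mathcal{Y}_+\|_{\dot H^1} \leq Ce^{-\frac{3}{2}e_0 t}$, so combining with the previous display gives $\|w(t)\|_{\dot H^1} \leq Ce^{-c_4 t}$ with $c_4 > e_0$. The difference $w$ solves
\begin{equation*}
    \partial_t w + \mathcal{L} w = -i\bigl( R(u - W) - R(U^a - W)\bigr),
\end{equation*}
and the nonlinear difference estimate in Lemma \ref{linearoperator:prelimestimate} together with the exponential smallness of both $h$ and $U^a - W$ yields a source bounded by $e^{-e_0 t}\|w\|$ in the right norms; hence the source decay rate is at least $c_4 + e_0$, strictly larger than $c_4$. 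Because $c_4 > e_0$ already, we are in the regime $c_1 > e_0$ of Proposition \ref{improved decay}(a), which upgrades the rate of $w$ to roughly $(c_4 + e_0)^-$.

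\textbf{Iteration and conclusion.} Re-iterating this self-improving estimate shows that $\|w(t)\|_{\dot H^1}$ decays faster than any prescribed exponential rate, in particular faster than $e^{-(k+\frac12)e_0 t}$ for every $k$. Applied in the Strichartz norm via Lemma \ref{vstrich}, this yields $\|w\|_{l(t,+\infty)} \leq e^{-(k+\frac12)e_0 t}$ for $t$ large and $k$ arbitrary. The uniqueness clause of Proposition \ref{existence:thresholdsolution} (the fixed-point argument for $\MMM_k$ on $B_l^k$ has a unique solution in that class) then forces $u = U^a$ on $[t_0, +\infty)$, and uniqueness of $a$ is immediate since two different values of $a$ would yield different leading terms $a e^{-e_0 t}\mathcal{Y}_+$. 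The main technical obstacle is the bookkeeping in the bootstrap: at each step one must verify that the source term $R(h) - R(U^a - W)$ (and its gradient) lies in the hypothesis class of Proposition \ref{improved decay} on every unit-length interval, with the claimed exponential rate; this uses both the quadratic nature of the remainder and the uniform Strichartz control inherited from Lemma \ref{vstrich}.
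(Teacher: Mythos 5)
Your proposal follows the paper's own three-step strategy (bootstrap $h=u-W$ via Proposition~\ref{improved decay} to extract the leading term $ae^{-e_0 t}\mathcal{Y}_+$, iterate the self-improving estimate on $w=u-U^a$ to gain roughly $\tfrac12 e_0$ per step, then invoke the uniqueness clause of Proposition~\ref{existence:thresholdsolution}), and the argument is correct. One small fix: the a~priori Strichartz control in the initial bootstrap should come from Lemma~\ref{linearstrich}, not Lemma~\ref{vstrich}, since the latter presupposes decay of the forcing $g=-iR(h)$ (precisely what you are in the middle of establishing), whereas Lemma~\ref{linearstrich} applies directly to solutions of the nonlinear linearized equation~\eqref{linearequat} under the $\dot H^1$ decay hypothesis alone.
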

	For the sake of completeness, We give the details.
	\begin{proof} Let $u=W+v$ be a radial solution of \eqref{har} for $t\geq
		t_0$ satisfying \eqref{uconvGSexponential}. Then $v$ satisfies the
		linearized equation \eqref{linearequat}, that is,
		\begin{equation*}
			\aligned
			\partial_t v + \mathcal{L} v=R(v),
			\endaligned
		\end{equation*}
		
		\noindent{\bf Step 1: } We will show that there exists $a\in \R$
		such that
		\begin{align}\label{uconvappsol:e0decay}
			\forall\; \eta>0,\; \big\|v(t)-ae^{-e_0t}\YYY_+\big\|_{\dot H^1} +
			\big\|v(t)-ae^{-e_0t}\YYY_+\big\|_{l(t, +\infty)} \leq C_{\eta}
			e^{-2^-e_0t}.
		\end{align}
		Indeed, we only need to show
		\begin{align}\label{vdecay:e0}
			\big\|v(t)\big\|_{\dot H^1} \leq C e^{-e_0t}, &
			\;\; \big\|R(v(t))\big\|_{L^{\frac{2N}{N+2}}(\R^N)}+  \big\| \nabla
			R(v(t))\big\|_{N(t, +\infty)} \leq  Ce^{-2e_0 t},
		\end{align}
		which together with Proposition \ref{improved decay} gives
		\eqref{uconvappsol:e0decay}. By Lemma
		\ref{linearoperator:prelimestimate} and Lemma \ref{linearstrich} it
		suffices to show the first estimate.
		
		By \eqref{uconvGSexponential}, Lemma
		\ref{linearoperator:prelimestimate} and Lemma \ref{linearstrich}, we
		have
		\begin{align*}
			\big\|R(v(t))\big\|_{L^{\frac{2N}{N+2}}(\R^N)}+  \big\| \nabla
			R(v(t))\big\|_{N(t, +\infty)} \leq  Ce^{-2c t}.
		\end{align*}
		Then Proposition \ref{improved decay} gives that
		\begin{align*}
			\big\|v(t)\big\|_{\dot H^1} \leq C \Big(  e^{-e_0t} + e^{-\frac32ct}\Big).
		\end{align*}
		If $ \dfrac32\;c\geq e_0$, we obtain the first inequality in
		\eqref{vdecay:e0}. If not, an iteration argument gives the first
		inequality in \eqref{vdecay:e0}.
		
		\noindent{\bf Step 2: } We will use the induction argument to show
		that
		\begin{align}\label{uconvappsol:fastdecay}
			\forall\; m >0,\; \big\|u(t)-U^a(t)\big\|_{\dot H^1} +
			\big\|u(t)-U^a(t)\big\|_{l(t, +\infty)} \leq   e^{-mt}.
		\end{align}
		
		By Proposition \ref{existence:thresholdsolution} and Step 1,
		\eqref{uconvappsol:fastdecay} holds with $m=\frac32e_0$. Let us
		assume that \eqref{uconvappsol:fastdecay} holds with $m=m_1 > e_0$,
		we will show that it also holds for $m=m_1 + \frac12 e_0$, which
		yields \eqref{uconvappsol:fastdecay} by iteration.
		
		If writing $u^a(t)=U^{a}-W$, then we have
		\begin{align*}
			\partial_t \big( v-u^a \big) + \mathcal{L} \big( v-u^a \big) =
			R(v)- R(u^a).
		\end{align*}
		By assumption, we have \begin{align*} \big\|v(t)-u^a(t)\big\|_{\dot
				H^1} + \big\|v(t)-u^a(t)\big\|_{l(t, +\infty)} \leq C  e^{-m_1t}.
		\end{align*}
		By Lemma \ref{linearoperator:prelimestimate} and Lemma
		\ref{summation}, we have
		\begin{align*}
			\big\|R(v(t))-R(u^a(t))\big\|_{L^{\frac{2N}{N+2}}(\R^N)}+  \big\|
			\nabla \big( R(v(t))-R(u^a(t))\big)\big\|_{N(t, +\infty)} \leq
			Ce^{-(m_1 +e_0) t}.
		\end{align*}
		By Proposition \ref{improved decay} and Lemma \ref{vstrich}, we have
		\begin{align*}
			\forall\; m >0,\; \big\|v(t)-u^a(t)\big\|_{\dot H^1} +
			\big\|v(t)-u^a(t)\big\|_{l(t, +\infty)} \leq C
			e^{-(m_1+\frac{3}{4}e_0)t}\leq   e^{-(m_1+\frac{1}{2}e_0)t}.
		\end{align*}

		\noindent{\bf Step 3: End of the proof. } Using
		\eqref{uconvappsol:fastdecay} with $m=(k_0 +1)e_0$, (where $k_0$ is
		given by Proposition \ref{existence:thresholdsolution}), we get that
		for large $t>0$,
		\begin{align*}
			\big\|u(t)-U^a_{k_0}(t)\big\|_{l(t, +\infty)} \leq   e^{-(k_0+\frac12)e_0t}.
		\end{align*}
		By the uniqueness in Proposition \ref{existence:thresholdsolution},
		we get that $u=U^a$, which concludes the proof. \end{proof}
	

	\begin{corollary}\label{cor:uniqueness}
		For any $a\not =0$, there exists $T_a \in \R$  such that
		\begin{equation*}
			\left\{ \begin{array}{rl} U^a(t)=W^{+}(t-T_a), & \text{if}\;\; a>0,\\
				U^a(t)=W^{-}(t-T_a), & \text{if}\; \; a<0,
			\end{array} \right.
		\end{equation*}
	\end{corollary}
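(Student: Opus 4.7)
\medskip
\noindent\textbf{Proof proposal for Corollary \ref{cor:uniqueness}.}
The plan is to exploit the time translation invariance of \eqref{har} together with the uniqueness statement in Proposition \ref{uniqueness}. Fix $a\neq 0$ and $s\in\R$, and consider the time translate $\widetilde U(t):=U^a(t+s)$. Since \eqref{har} is autonomous, $\widetilde U$ is again a radial solution with $E(\widetilde U)=E(W)$, and \eqref{difference:Energy} gives
\begin{equation*}
\bigl\|\widetilde U(t)-W\bigr\|_{\dot H^1}
=\bigl\|U^a(t+s)-W\bigr\|_{\dot H^1}
\le C e^{-e_0(t+s)},\qquad t\ge t_0-s,
\end{equation*}
so $\widetilde U$ satisfies the hypothesis \eqref{uconvGSexponential} of Proposition \ref{uniqueness}. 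Hence there exists a unique $b=b(a,s)\in\R$ with $\widetilde U=U^{b}$, i.e.
\begin{equation*}
U^a(t+s)=U^{b(a,s)}(t)\qquad\text{for all sufficiently large }t.
\end{equation*}

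The next step is to identify $b(a,s)$ via the leading-order asymptotics. Using \eqref{difference:Energy} for both sides,
\begin{equation*}
\bigl\|U^a(t+s)-W-ae^{-e_0 s}\,e^{-e_0 t}\mathcal{Y}_+\bigr\|_{\dot H^1}
\le C e^{-\frac{3}{2}e_0 t},\qquad
\bigl\|U^{b}(t)-W-b\,e^{-e_0 t}\mathcal{Y}_+\bigr\|_{\dot H^1}
\le C e^{-\frac{3}{2}e_0 t}.
\end{equation*}
Subtracting and sending $t\to+\infty$ after dividing by $e^{-e_0 t}$, we conclude $b(a,s)=ae^{-e_0 s}$; since $\mathcal{Y}_+\neq 0$ in $\dot H^1$, this identification is forced. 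Thus the one-parameter family $\{U^a\}_{a\neq 0}$ is orbit under time translation, and in particular the sign of $a$ is preserved.

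Finally, given $a\neq 0$, choose $s=s(a)\in\R$ by $ae^{-e_0 s}=\mathrm{sgn}(a)$, i.e.\ $s=\frac{\log|a|}{e_0}$. Then $U^a(t+s)=U^{\pm 1}(t)$ according as $a\gtrless 0$. Recalling the definition \eqref{Definition:threshold}, $W^{\pm}(t)=U^{\pm 1}(t+t_0)$, so $U^{\pm 1}(t)=W^{\pm}(t-t_0)$ and therefore
\begin{equation*}
U^a(t)=U^{\pm 1}(t-s)=W^{\pm}\bigl(t-s-t_0\bigr),
\end{equation*}
which is the desired identity with $T_a:=s+t_0=t_0+\frac{\log|a|}{e_0}$ (and the sign in $W^{\pm}$ equal to $\mathrm{sgn}(a)$). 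The only subtle point is justifying that the asymptotic coefficient of $\mathcal{Y}_+$ is genuinely determined by the solution; this is precisely Step~1 (namely \eqref{uconvappsol:e0decay}) in the proof of Proposition \ref{uniqueness}, which shows that the $\dot H^1$ component along $e^{-e_0 t}\mathcal{Y}_+$ in any solution converging exponentially to $W$ is a well-defined real number, so $b(a,s)$ is unambiguous. No additional analysis beyond Proposition \ref{uniqueness} and \eqref{difference:Energy} is needed.
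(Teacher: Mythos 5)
Your proposal is correct and takes essentially the same approach as the paper: both apply Proposition \ref{uniqueness} to the time translate of $U^a$ and then identify the resulting index by matching the leading $e^{-e_0 t}\mathcal{Y}_+$ term from \eqref{difference:Energy}. The only difference is organizational — you first derive the general translation law $U^a(\cdot+s)=U^{ae^{-e_0s}}$ and then specialize $s$, while the paper picks $T_a$ with $|a|e^{-e_0 T_a}=1$ upfront; you are also slightly more careful in accounting for the extra $t_0$ shift coming from \eqref{Definition:threshold}.
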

	\begin{proof}Let $a\not =0$ and choose $T_a$ such that
		$|a|e^{-e_0T_a}=1$. By \eqref{difference:Energy}, we have
		\begin{align}\label{ua:translation}
			\big\|U^a(t+T_a)-&W - \text{sign} (a) e^{-e_0t}\YYY_+\big\|_{\dot
				H^1} \nonumber \\
			= & \big\|U^a(t+T_a)-W-a
			e^{-e_0(t+T_a)}\YYY_+\big\|_{\dot H^1} \leq   e^{-\frac32 e_0
				(t+T_a)} \leq C e^{-\frac32 e_0 t}. 
		\end{align}
		In addition, $U^a(t+T_a)$ satisfies the assumption of Proposition
		\ref{uniqueness}, this implies that there exists $\widetilde{a}$
		such that \begin{align*} U^a(t+T_a) =U^{\widetilde{a}}(t).
		\end{align*}
		By \eqref{ua:translation} and \eqref{difference:Energy}, we know
		that  $\widetilde{a}=1$ if $a>0$ and $\widetilde{a}=-1$ if $a=-1$,
		this completes the proof.\end{proof}
	\begin{remark}
		This shows that  $U^a(t)=W^\pm(t)$ up to  time translation. 
	\end{remark}

	\subsection{Proof of Theorem \ref{classification}.} Let us first show
	(a). Let $u$ be a radial solution to \eqref{har} on $[-T_-, T_+]$
	such that, replacing if necessary $u(t)$ by $\overline{u}(-t)$,
	$$E(u_0)=E(W), \big\|\nabla u_0 \big\|_{L^2} < \big\|\nabla W
	\big\|_{L^2},\;\text{and}\; \big\|u\big\|_{Z(0, T_+)}=+\infty.$$
	Then by Proposition \ref{expdecay:subcase}, we know that $T_-=T_+=\infty$, scatters in negative time and there exist $\theta_0\in \R, \mu_0>0$ and $c,
	C>0$ such that
	\begin{align*} \big\|u(t)-W_{\theta_0, \mu_0}\big\|_{\dot H^1} \leq
		C e^{-ct}\; \text{for any}\; t\geq 0.
	\end{align*}
	This shows that $u_{-\theta_0, \frac{1}{\mu_0}}$ fulfills the
	assumptions of Proposition \ref{uniqueness}. By $\big\|u\big\|_{\dot
		H^1} < \big\| W\big\|_{\dot H^1}$ and Corollary
	\ref{cor:uniqueness}, we know that there exist  $a<0$ and $T_a$ such
	that
	\begin{align*}
		u_{-\theta_0, \frac{1}{\mu_0}}(t)=U^a(t) = W^{-}(t-T_a),
	\end{align*}
	which  shows (a).
	
	(b) is a consequence of the variational characterization of $W$.

	The proof of (c) is similar to that of (a). Let $u$ be a radial
	solution of \eqref{har} defined on $[0, +\infty)$  (Replacing if
	necessary $u(t)$ by $\overline{u}(-t)$) such that
	\begin{align*}
		E(u)=E(W), \; \big\| \nabla u_0\big\|_{L^2}> \big\|\nabla
		W\big\|_{L^2},\; \text{and}\; u_0 \in L^2.
	\end{align*}
	then by Proposition \ref{expdecay:supercase}, there exist
	$\theta_0\in \R, \mu_0>0$ and $c, C>0$ such that \begin{align*}
		\big\|u(t)-W_{\theta_0, \mu_0}\big\|_{\dot H^1} \leq C e^{-ct}.
	\end{align*}
	Similar to the proof of (a), we know that there exist  $a>0$ and
	$T_a$ such that
	\begin{align*}
		u_{-\theta_0, \frac{1}{\mu_0}}(t)=U^a(t) = W^{+}(t-T_a).
	\end{align*}
	This shows (c). The proof of Theorem \ref{classification} is
	completed. \qed


	\appendix
	\section{Nondegeneracy of  energy solution of  equation \eqref{elliptic equation}}\label{appen regularity}
	From Remark \ref{rem:nondeg} (b) and (c) , it suffices to show $L^{\infty}$-regularity of $\dot H^1(\R^N)$ solution of equation \eqref{elliptic equation} to prove Proposition \ref{prop:nondegeneracy}. Now we use  the Moser iteration method in \cite{DiMeVald:book} to show this result in this appendix.  
	
	Recalling equation \eqref{elliptic equation},  energy solution $u\in \dot{H}^{1}(\R^N)$ satisfies 
	\begin{equation}\label{elliptic equat}
		-\Delta u =(p-1)\left( I_{\lambda}*W^{p}\right)W^{p-2}u+p \left[ I_{\lambda}*(W^{p-1}u)\right]W^{p-1}.
	\end{equation}
	Let $\beta\ge 1,\;T>0$, we define the cutoff function
	\begin{equation}\label{deffunction}
		\varphi(t)=\begin{cases}
			\beta T^{\beta-1}(t-T)+T^{\beta},&if \;t\ge T;\\
			|t|^{\beta},&if \;-T\le t\le T; \\
			\beta T^{\beta-1}(T-t)+T^{\beta},&if \;t\le -T.
		\end{cases}
	\end{equation}
	Since $\varphi$ is convex and Lipschitz, we have
	\begin{equation}\label{derivation}
		(-\Delta)^s \varphi(u)\le \varphi'(u)(-\Delta )^s u,\;s\in[0,1]
	\end{equation}
	in the weak sense. 
	By the equation \eqref{elliptic equat}, we get
	\begin{align*}
		\int_{\R^N}(-\Delta u) \varphi(u)\varphi'(u)\;dx=&\, (p-1)\int_{\R^N}\left( I_{\lambda}*W^{p}\right)W^{p-2}u\varphi(u)\varphi'(u)\;dx\\
		&\, \,  + p\int_{\R^N} \left[ I_{\lambda}*(W^{p-1}u)\right]W^{p-1}\varphi(u)\varphi'(u)\;dx.
	\end{align*}
	Using \eqref{derivation} and integration by parts, we get
	\begin{equation*}
		\int_{\R^N}(-\Delta u) \varphi(u)\varphi'(u)\;dx=\int_{\R^N} |\nabla u|^2(\varphi' (u))^2\;dx+\int_{\R^N}|\nabla u|^2\varphi(u)\varphi''(u)\;dx.
	\end{equation*}
	By  \eqref{deffunction}, we know that the last integral is nonnegative, therefore, we have
	\begin{align*}
		\int_{\R^N} |\nabla u|^2(\varphi' (u))^2\;dx\le &\, (p-1)\int_{\R^N}\left( I_{\lambda}*W^{p} \right)W^{p-2}|u||\varphi(u)\varphi'(u)|\;dx\\
		&\,\,  +p\int_{\R^N} \left[ I_{\lambda}*(W^{p-1}|u|)\right]W^{p-1}\varphi(u)|\varphi'(u)|\;dx.
	\end{align*}
	Applying the weak-Young inequality and noting that $W=c\left(\frac{t}{t^2+|x|^2}\right)^{\frac{N-2}{2}}\in L^p(\R^N),\; p>(N-2)/N$, we have 
	$$I_{\lambda}*W^{p}\in L^{\infty}(\R^N).$$
	Using that $\varphi(u)\varphi'(u)\le \beta u^{2\beta -1}$ and the H\"{o}lder inequality, we have \begin{equation*}
		\int_{\R^N}\left( I_{\lambda}*W^{p} \right)W^{p-2}|u||\varphi(u)\varphi'(u)|\;dx\le C\beta\int_{\R^N}|u|^{2\beta}.
	\end{equation*}
	Similarly, we can get 
	\begin{equation*}
		\int_{\R^N} \left[ I_{\lambda}*(W^{p-1}|u|)\right]W^{p-1}\varphi(u)|\varphi'(u)|\;dx\le C\beta\int_{\R^N}|u|^{2\beta}.
	\end{equation*}
	Thus, by the Sobolev inequality, we have 
	\begin{equation*}
		\|\varphi(u)\|_{L^{2^*}}^2\le \|\varphi(u)\|_{\dot{H}^1}^2=\int_{\R^N} |\nabla u|^2(\varphi' (u))^2\;dx\le C\beta\int_{\R^N}|u|^{2\beta},
	\end{equation*}
	where $2^*=\frac{2N}{N-2}$.
	Let $T\rightarrow \infty$, we obtain
	\begin{equation*}
		\left(\int_{\R^N} |u|^{2^*\beta}\right)^{\frac{2}{2^*}}\le C\beta\int_{\R^N}|u|^{2\beta},
	\end{equation*}
	where C changing from line to line, but independent of $\beta$. Therefore,
	\begin{equation}\label{iteration}
		\left(\int_{\R^N} |u|^{2^*\beta}\right)^{\frac{1}{2^*\beta}}\le (C\beta)^{\frac{1}{2\beta}}\left(\int_{\R^N}|u|^{2\beta}\right)^{\frac{1}{2\beta}}.
	\end{equation}
	Because $u\in \dot{H}^{1}(\R^N)$, we know $u\in L^{\frac{2N}{N-2}}$ by the Sobolev inequality. Therefore, we can take $\beta_1=\frac{N}{N-2}\ge1$. From now on, we can follow exactly the iteration argument. That is, we define $\beta_{m+1},m\ge1$, so that $$2\beta_{m+1}=2^*\beta_m.$$
	Thus,
	$$\beta_{m+1}=\left(\frac{2^*}{2}\right)^m\beta_1,$$
	and replacing in equation \eqref{iteration}, we know that
	\begin{equation*}
		\left(\int_{\R^N}|u|^{2\beta_{m+1}}\right)^{\frac{1}{2\beta_{m+1}}}=\left(\int_{\R^N}|u|^{2^*\beta_m}\right)^{\frac{1}{2^*\beta_m}}\le \left(C\beta_m\int_{\R^N}|u|^{2\beta_m}\right)^{\frac{1}{2\beta_m}}.
	\end{equation*}
	Defining $C_{m+1}:=C\beta_m$ and
	$$A_{m}:=\big(\int_{\R^N}|u|^{2\beta_{m+1}}\big)^{\frac{1}{2\beta_{m+1}}},$$
	we conclude that there exists a constant $C_0>0$ independent of $m$, such that
	$$A_{m+1}\le \prod_{k=1}^{m}C_k^{\frac{1}{2\beta_{k}}}A_1\le C_0A_1.$$
	Thus,  we obtain
	$$\|u\|_{L^{\infty}(\R^N)}\le C_0A_1<+\infty.$$
	This together with the result in \cite{LLTX:Nondegeneracy} completes the proof of Proposition \ref{prop:nondegeneracy}.
	\qed
	
	\section{Coercivity of $\Phi$ on $H^{\bot} \cap \dot H^1_{rad}$}\label{appen CoerH}
	In this Appendix, we follow the argument in \cite{MiaoWX:dynamic gHartree} and prove Proposition \ref{coerH}. We divide the
	proof into two steps.
	
	\noindent{\bf Step 1: Nonnegative.} We claim that for any  function
	$h \in \dot H^1$, $\big(h, W\big)_{\dot H^1}=0$, there exists
	\begin{equation*}
		\aligned    \Phi(h) \geq  0.
		\endaligned
	\end{equation*}
	
	Indeed, let
	\begin{equation*}
		\aligned    I(u)= \frac{\big\|\nabla u \big\|^{2p}_2}{\big\|\nabla
			W\big\|^{2p}_2} - \frac{\displaystyle \iint_{\R^N\times\R^N}
			I_{\lambda}(x-y)|u(x)|^{p}|u(y)|^p) \; dxdy }{
			\displaystyle \iint_{\R^N\times\R^N}
			I_{\lambda}(x-y)W(x)^{p}W(y)^p \; dxdy }.
		\endaligned
	\end{equation*}
	By Proposition \ref{SharpConstant}, we have
	\begin{equation*}
		\aligned \forall \; u \in \dot H^1, \quad I(u) \geq 0.
		\endaligned
	\end{equation*}
	
	Take $\displaystyle h\in \dot H^1 $ with $(W, h)_{\dot H^1}=0, \;
	\alpha \in \R$, we consider the expansion of $I(W+\alpha h)$ in
	$\alpha$ of order $2$. Note that
	\begin{equation*}
		\aligned \big\|\nabla (W+ \alpha h) \big\|^{2p}_2 = \big\|\nabla W
		\big\|^{2p}_2  \left( 1 + p \alpha^2 \frac{\big\|h\big\|^2_{\dot
				H^1}}{\big\|W\big\|^2_{\dot H^1} } + O(\alpha^4)\right),
		\endaligned
	\end{equation*}
	and
	\begin{align*}
		& \iint_{\R^N\times\R^N} I_{\lambda}(x-y)\big|W(x)+ \alpha h(x) \big|^p\big|W(y)+ \alpha h(y) \big|^p \; dxdy\\
		= &\iint_{\R^N\times\R^N} I_{\lambda}(x-y)W(x)^{p}W(y)^p \; dxdy \\
		&   + p\alpha    \iint_{\R^N\times\R^N} I_{\lambda}(x-y)\left(W(x)^{p}W(y)^{p-1}h_1(y)+W(y)^{p}W(x)^{p-1}h_1(x) \right)\; dxdy
		\\  &  + \; \alpha^2  \iint_{\R^N\times\R^N} I_{\lambda}(x-y)\Big[\left(\frac{p(p-1)}{2}W(x)^{p-2}h_1(x)^{2}+\frac{p}{2}W(x)^{p-2}h_2(x)^{2}\right)W(y)^{p}\\
		&\qquad\qquad+\left(\frac{p(p-1)}{2}W(y)^{p-2}h_1(y)^{2}+\frac{p}{2}W(y)^{p-2}h_2(y)^{2}\right)W(x)^{p}\\
		&\qquad\qquad+p^2W(x)^{p-1}h_1(x)W(y)^{p-1}h_1(y)\Big] \; dxdy  \\
		& +O(\alpha^3) \\
		= &\iint_{\R^N\times\R^N} I_{\lambda}(x-y)W(x)^{p}W(y)^p \; dxdy  \\
		& \times \Big( 1 +  \frac{\alpha^2 }{\big\|W\big\|^2_{\dot	H^1}}
		\iint_{\R^N\times\R^N} I_{\lambda}(x-y)\big[p(p-1)W(x)^{p-2}h_1(x)^{2}W(y)^{p}\\ &+pW(x)^{p-2}h_2(x)^{2}W(y)^{p}+p^2W(x)^{p-1}h_1(x)W(y)^{p-1}h_1(y)\big] \; dxdy+ O(\alpha^3) \Big),
	\end{align*}
	where we use the facts that
	\begin{align*}
		\int_{\R^N} (I_{\lambda}*W^p) W(y)^{p-1}h_1(y)\; dy =& \int_{\R^N} - \Delta W(y)\cdot h_1(y)dy
		=0 ,\\
		\int_{\R^N} (I_{\lambda}*W^p) W(x)^{p-1}h_1(x)\; dx =& \int_{\R^N} - \Delta W(x)\cdot h_1(x)dy
		=0, \\
		\iint_{\R^N\times\R^N} I_{\lambda}(x-y) W(x)^{p-2}h_1(x)^{2} dx= & \iint_{\R^N\times\R^N} I_{\lambda}(x-y) W(y)^{p-2}h_1(y)^{2} dx,\\
		\iint_{\R^N\times\R^N} I_{\lambda}(x-y)W(x)^{p}W(y)^p \; dxdy = &  \int_{\R^N} \big|\nabla W \big|^2
		dx.
	\end{align*}
	Therefore we have
	\begin{align*}
		I(W+\alpha h)  = &  \frac{ p \alpha^2}{\big\|W\big\|^2_{\dot
				H^1}} \Big(  \big\|h\big\|^2_{\dot H^1}-
		\iint_{\R^N\times\R^N} I_{\lambda}(x-y)\big[(p-1)W(x)^{p-2}h_1(x)^{2}W(y)^{p}\\
		&\qquad\qquad+W(x)^{p-2}h_2(x)^{2}W(y)^{p}+W(x)^{p-1}h_1(x)W(y)^{p-1}h_1(y)\big] \ dxdy \Big) \\
		& + O(\alpha^3).
	\end{align*}
	
	Since $I(W+\alpha h) \geq 0$ for all $\alpha \in \R$,  we have
	\begin{align*}
		\aligned 2\Phi(h)&= \int_{\R^N} \big|\nabla h \big|^2 dx -
		\iint_{\R^N\times\R^N} I_{\lambda}(x-y)\big[(p-1)W(x)^{p-2}h_1(x)^{2}W(y)^{p}\\
		&\qquad+W(x)^{p-2}h_2(x)^{2}W(y)^{p}+W(x)^{p-1}h_1(x)W(y)^{p-1}h_1(y)\big] \ dxdy\\
		& \geq 0.
		\endaligned
	\end{align*}
	
	\noindent{\bf Step 2: Coercivity.} We show that there exists a
	constant $c_*>0$ such that for any radial function $h\in H^{\bot}$
	\begin{equation*}
		\aligned  \Phi (h) \geq c_* \big\|h\big\|^2_{\dot H^1}.
		\endaligned
	\end{equation*}
	
	Note that $\Phi(h) = \Phi_1 (h_1) + \Phi_2(h_2)$,  where
	\begin{align*}
		\Phi_1 (h_1)
		: = & \frac12 \int_{\R^N} (L_+h_1) h_1  \; dx\\
		=& \frac12 \int_{\R^N} \big| \nabla h_1 \big| ^2 \;dx- \frac{p-1}{2}
		\int_{\R^N} (I_{\lambda}*W^p)W(x)^{p-2}h_1(x)^{2}\;dx \\
		& \qquad \qquad \qquad \quad- \frac{p}{2}\int_{\R^N}
		(I_{\lambda}*W^{p-1}h_1)W(x)h_1(x)\;dx,
	\end{align*}
	and
	\begin{align*}
		\Phi_2(h_2) :=  & \frac12 \int_{\R^N} (L_-h_2) h_2 \; dx \\
		=& \frac12 \int_{\R^N}
		\big| \nabla h_2 \big| ^2 \; dx- \frac12 \ \int_{\R^N} (I_{\lambda}*W^p)W(x)^{p-2}h_2(x)^{2}\;dx .
	\end{align*}
	By Step 1, $L_+$ is nonnegative on $\{W\}^{\bot}$ (in the sense of
	$\dot H^1$) and $L_-$ is nonnegative. We will deduce the coercivity
	by the compactness argument in \cite{MiaoWX:dynamic gHartree, Wein:85:Modulationary stability}.
	
	We first show that there exists a constant $c$ such that for any
	radial real-valued $\dot H^1$-function $h_1 \in \{W,
	\widetilde{W}\}^{\bot}$ (in the sense of $\dot H^1$),
	\begin{equation*}
		\aligned   \Phi_1 (h_1) \geq c \big\| h_1 \big\|^2_{\dot H^1}.
		\endaligned
	\end{equation*}
	
	Assume that the above inequality does not hold. Then there exists a
	sequence of real-valued radial $\dot H^1$-functions $\{f_n\}_n$ such
	that
	\begin{equation} \label{H-noncoercivity}
		\aligned f_n \in H^{\bot}, \quad \lim_{n \rightarrow +\infty}
		\Phi_1(f_n) =0, \quad \big\|f_n\big\|_{\dot H^1} =1.
		\endaligned
	\end{equation}
	Extracting a subsequence from $\{f_n\}$, we may assume that
	\begin{equation*}
		\aligned f_n \rightharpoonup f_* \; \text{in} \; \dot H^1.
		\endaligned
	\end{equation*}
	The weak convergence of $f_n \in H^{\bot}$ to $f_*$ implies that
	$f_*\in H^{\bot}$. In addition, by compactness, we have
	\begin{align*}
		&\iint_{\R^N\times\R^N} I_{\lambda}(x-y)W(y)^pW(x)^{p-2}|f_n(x)|^{2}\;dxdy\\ 
		\longrightarrow & \iint_{\R^N\times\R^N} I_{\lambda}(x-y)W(y)^pW(x)^{p-2}|f_*(x)|^{2}\;dxdy, \\
		&\iint_{\R^N\times\R^N}I_{\lambda}(x-y)W(x)^{p-1}|f_n(x)|W(y)^{p-1}|f_n(y)| \;dxdy \\
		\longrightarrow &\iint_{\R^N\times\R^N}I_{\lambda}(x-y)W(x)^{p-1}|f_*(x)|W(y)^{p-1}|f_*(y)| \;dxdy.
	\end{align*}
	Thus by the Fatou lemma, \eqref{H-noncoercivity} and Step 1, we have
	\begin{equation*}
		\aligned 0 \leq \Phi_1(f_*)\leq  \liminf_{n\rightarrow +\infty}
		\Phi_1(f_n)=0.
		\endaligned
	\end{equation*}
	$f_*$ is the solution to the following minimizing problem
	\begin{align*}
		0= \min_{f\in \Omega\backslash \{0\}   }
		\frac{\displaystyle\int_{\R^N}  L_+ f \cdot f \; dx}{\big\|f\big\|^2_{\dot H^1}},\quad \Omega=\left\{f\in \dot
		H^1_{rad}, (f, W)_{\dot H^1}= (f, \widetilde{W})_{\dot
			H^1}=0\right\}.
	\end{align*}
	Thus for some Lagrange multipliers $\lambda_0, \lambda_1$, we have
	\begin{align*}
		L_+f_*=\lambda_0 \Delta W + \lambda_1 \Delta \widetilde{W}.
	\end{align*}
	Note that $(W, \widetilde{W})_{\dot H^1}=0$ and
	$L_+(\widetilde{W})=0$, we have
	\begin{align*}
		0=\int_{\R^N} f_* L_+(\widetilde{W})=\int_{\R^N} (L_+
		f_*)\widetilde{W} =\lambda_1 \big\|\widetilde{W}\big\|^2_{\dot H^1}
		\Longrightarrow \lambda_1=0.
	\end{align*}
	Thus
	\begin{align*}
		L_+f_*=\lambda_0 \Delta W =-\lambda_0 \big(  |x|^{-4}*W^2 \big) W
		= \frac{\lambda_0}{2} L_+W.
	\end{align*}
	By $\text{Null}(L_+) = \text{span}\{\widetilde{W}\}$ in Lemma
	\ref{keyassumption},  there exists $\mu_1$ such that
	\begin{align*}
		f_*= \frac{\lambda_0}{2} W + \mu_1 \widetilde{W}.
	\end{align*}
	Using $f_* \in H^{\bot}$, we get $\mu_1=0$ and $\lambda_0=0$, therefore, we have
	\begin{equation*}
		\aligned   f_*=0,\;\; \text{and}\;\; f_n \rightharpoonup 0 \;
		\text{in} \; \dot H^1 .
		\endaligned
	\end{equation*}
	Now, by  compactness, we have
	\begin{equation*}
		\iint_{\R^N\times\R^N} I_{\lambda}(x-y)W(y)^pW(x)^{p-2}|f_n(x)|^{2}\;dxdy
		\longrightarrow 0, 
	\end{equation*}
	\begin{equation*}
		\iint_{\R^N\times\R^N}I_{\lambda}(x-y)W(x)^{p-1}|f_n(x)|W(y)^{p-1}|f_n(y)| \;dxdy \longrightarrow 0.
	\end{equation*}
	By $\Phi_1(f_n)\rightarrow 0$ in \eqref{H-noncoercivity}, we get
	that
	\begin{equation*}
		\aligned \big\|\nabla f_n \big\|_2 \rightarrow 0,
		\endaligned
	\end{equation*}
	which contradicts $\big\|f_n\big\|_{\dot H^1} =1$ in
	\eqref{H-noncoercivity}.
	
	Using the same argument, we can show that there exists a constant
	$c$, such that for any real-valued radial $\dot H^1$-function $h_2
	\in \{W\}^{\bot}$, we have $$\Phi_2 (h_2) \geq c \big\| h_2
	\big\|^2_{\dot H^1}. $$ This completes the proof. \qed

	\section{Spectral properties of the linearized operator}
	\label{appen-spectralprop} In this Appendix, we follow the argument in \cite{DuyMerle:NLS:ThresholdSolution,  DuyMerle:NLW:ThresholdSolution, MiaoWX:dynamic gHartree} to give the proof
	of Proposition \ref{spectral}.
	
	\subsection{Existence, symmetry of the eigenfunctions} Note that
	$$\overline{\mathcal{L}v}=-\mathcal{L}\overline{v},$$ so that if
	$e_0>0$ is an eigenvalue of $\mathcal{L}$ with the eigenfunction
	$\mathcal{Y}_+$, $-e_0$ is also an eigenvalue with eigenfunction
	$\mathcal{Y}_-=\overline{\mathcal{Y}}_+$.
	
	Now we show the existence of $\mathcal{Y}_+$. Let $\mathcal{Y}_1=\Re
	\mathcal{Y}_+, \mathcal{Y}_2 = \Im \mathcal{Y}_+$, it suffices to
	show that
	\begin{equation}\label{eigenf-system}
		-L_- \mathcal{Y}_2 =\; e_0 \mathcal{Y}_1, \quad
		L_+ \mathcal{Y}_1 = \; e_0 \mathcal{Y}_2.
	\end{equation}
	
	From the proof of the coercivity property of $\Phi$ on $H^{\bot}\cap
	\dot H^1_{rad}$ in Proposition \ref{coerH}, we know that $L_-$ on
	$L^2$ with domain $H^2$ is self-adjoint and nonnegative, By Theorem
	3.35 in \cite[Page 281]{kato:book}, it has a unique square root $
	(L_-)^{1/2} $ with domain $H^1$.
	
	Assume that there exists a function $f\in H^4_{rad}$ such that
	\begin{equation*}
		\aligned \mathcal{P} f=-e^2_0 f_1, \quad \mathcal{P} := \big(
		L_-\big)^{1/2} \big(L_+\big) \big( L_-\big)^{1/2}.
		\endaligned
	\end{equation*}
	Then taking
	\begin{equation*}
		\aligned \mathcal{Y}_1 : = \big( L_-\big)^{1/2} f, \quad
		\mathcal{Y}_2 := \frac{1}{e_0} \big(L_+\big) \big( L_-\big)^{1/2} f
		\endaligned
	\end{equation*}
	would yield a solution of \eqref{eigenf-system}, which implies the
	existence of the radial $\mathcal{Y}_{\pm}$ by the rotation
	invariance of the operator $\mathcal{L}$.
	
	It suffices to show that the operator $\mathcal{P}$ on $L^2$ with
	domain $H^4_{rad}$ has a strictly negative eigenvalue. Since
	$\mathcal{P}$ is a relatively  compact, self-adjoint, perturbation
	of $\big(-\Delta\big)^2$, then by the Weyl theorem
	\cite{HisSig:96:book, kato:book}, we know that
	\begin{equation*}
		\aligned \sigma_{ess}(\mathcal{P})= [0, +\infty).
		\endaligned
	\end{equation*}
	Thus we only need to show that $\mathcal{P}$ has at least one
	negative eigenvalue $-e^2_0$.
	\begin{lemma}
		\begin{equation*}
			\aligned
			\sigma_{-}(\mathcal{P}):= \inf\{\big(\mathcal{P}f , f
			\big)_{L^2}, f\in H^4_{rad}, \big\|f\big\|_{L^2}=1\}<0.
			\endaligned
		\end{equation*}
	\end{lemma}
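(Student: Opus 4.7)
The plan is to exhibit a radial test function $f_0 \in H^4_{rad}$ for which $\big(\mathcal{P} f_0, f_0\big)_{L^2} < 0$. Setting $g = (L_-)^{1/2} f_0$, this equals $\big(L_+ g, g\big)_{L^2}$, so it suffices to produce some $g$ in the range of $(L_-)^{1/2}$, regular enough for $L_+ g$ to make sense, with $\big(L_+ g, g\big)_{L^2} < 0$. The natural negative direction of $L_+$ is $W$ itself: differentiating the ground-state identity $-\Delta W = (I_{\lambda} \ast W^p) W^{p-1}$ yields $L_+ W = 2(p-1)\Delta W$, and hence $(L_+ W, W) = -2(p-1) \big\|\nabla W\big\|^2_{L^2} < 0$ in the sense of the quadratic form $\Phi$ of \eqref{Phi}.

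For $N = 3, 4$ we have $W \notin L^2(\R^N)$, so I would localize. Fix a smooth radial cutoff $\chi \in C_c^\infty(\R^N)$ with $\chi \equiv 1$ on $B_1$ and $\mathrm{supp}\,\chi \subset B_2$, and set $g_R(x) := \chi(x/R)\,W(x) \in C_c^\infty(\R^N)$. Expanding $\big(L_+ g_R, g_R\big)_{L^2}$ term by term, the cutoff error in the gradient part is bounded by $C R^{-2} \int_{R \le |x| \le 2R} W^2 \, dx \approx R^{2-N} \to 0$, while the nonlocal Hartree-type terms converge by dominated convergence owing to the polynomial decay of $W$ at infinity. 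Consequently $\big(L_+ g_R, g_R\big) \to -2(p-1) \big\|\nabla W\big\|^2_{L^2} < 0$ as $R \to \infty$, and we fix $R_0$ large so that $\big(L_+ g_{R_0}, g_{R_0}\big) < 0$.

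It remains to realize $g_{R_0}$ (or a close enough approximation) in the range of $(L_-)^{1/2}$. The key algebraic input is that $\ker L_- \cap L^2_{rad} = \{0\}$ when $N = 3, 4$: any $v \in H^2 \cap L^2_{rad}$ with $L_- v = 0$ is in particular a radial $\dot H^1$-solution of the imaginary-part linearized equation, so $v \in \mathrm{span}\{W\}$ by Lemma \ref{keyassumption} (itself a consequence of Proposition \ref{prop:nondegeneracy}); since $W \notin L^2$ we conclude $v = 0$. The spectral theorem for the nonnegative self-adjoint operator $L_-$ then yields that $\mathrm{Range}\big((L_-)^{1/2}\big)$ is $L^2$-dense in $L^2_{rad}$. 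Combining this with elliptic regularity for the Schr\"odinger operator $L_- = -\Delta - V$ with bounded, decaying potential $V = (I_{\lambda}\ast W^p)W^{p-2}$, one produces a sequence $f_n \in H^4_{rad}$ with $(L_-)^{1/2} f_n \to g_{R_0}$ in $H^1$, whence $\big(\mathcal{P} f_n, f_n\big) = \big(L_+ (L_-)^{1/2} f_n, (L_-)^{1/2} f_n\big) \to \big(L_+ g_{R_0}, g_{R_0}\big) < 0$ for large $n$, giving $\sigma_-(\mathcal{P}) < 0$.

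The hardest point is precisely this last density upgrade: the sign of $\big(L_+ g, g\big)$ is sensitive to $\nabla g$, so the abstract $L^2$-density of $\mathrm{Range}((L_-)^{1/2})$ does not suffice, and one must combine the triviality of $\ker L_- \cap L^2_{rad}$ with elliptic regularity and the nonlocal structure of $L_\pm$, in the spirit of the arguments in \cite{DuyMerle:NLS:ThresholdSolution, MiaoWX:dynamic gHartree}. An alternative would be to solve the resolvent equation $L_- u = g_{R_0}$ directly via a Fredholm/scattering argument, which is available because $L_-$ is a Schr\"odinger operator with trivial $L^2$-kernel and bounded decaying potential, thereby placing $g_{R_0}$ in the strict range of $L_-$ and a fortiori of $(L_-)^{1/2}$.
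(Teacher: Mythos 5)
Your argument reproduces the paper's strategy for $N=3,4$ fairly faithfully (cutoff $W_a=\chi(x/a)W$, the error terms $O(a^{-1})$ or $O(a^{-2})$ from the cutoff, triviality of $\ker L_-$ in $L^2_{rad}$ because $W\notin L^2$, and a density-plus-regularity upgrade to pass from $L^2$-approximation to convergence of the quadratic form). The place where you are vague — upgrading $L^2$-density of $\mathrm{Ran}(L_-)$ to convergence in a norm that controls the form — is handled in the paper by a clean trick: approximate $(\Delta+V-1)W_a$ in $L^2$ by $(\Delta+V)G_\epsilon$, then apply $(\Delta+V-1)^{-1}$ to gain two derivatives and get $\|F_\epsilon-W_a\|_{H^2}\lesssim\epsilon$, which is more than enough. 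Your alternative suggestion of solving $L_-u=g_{R_0}$ outright via Fredholm theory does not work as stated, since $0\in\sigma_{\mathrm{ess}}(L_-)$: trivial kernel gives dense range, not surjectivity.

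The genuine gap is that you treat only $N=3,4$, while the lemma (and Proposition~\ref{spectral}) is asserted for all $N\geq 3$. For $N\geq 5$ your test function cannot work: $W\in L^2$, so $\mathrm{Null}(L_-)=\mathrm{span}\{W\}$ and hence $\overline{\mathrm{Ran}\big((L_-)^{1/2}\big)}=\{W\}^\perp$ in $L^2$. Your $g_R=\chi(x/R)W$ converges to $W$ in $L^2$, so its component along $W$ tends to $\|W\|_{L^2}^2\neq 0$; consequently $g_R$ is not approximable by elements of $\mathrm{Ran}\big((L_-)^{1/2}\big)$ for large $R$, and projecting $g_R$ onto $\{W\}^\perp$ makes the test function degenerate. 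The paper instead uses, for $N\geq 5$, the negative eigenfunction $Z$ of $L_+$ (which exists since $(L_+W,W)_{L^2}<0$) shifted by $\alpha_1\widetilde W$: since $L_+\widetilde W=0$ the form value $\int L_+(Z+\alpha_1\widetilde W)\cdot(Z+\alpha_1\widetilde W)=\int L_+Z\cdot Z<0$ is unchanged, and because $(\widetilde W,W)_{L^2}\neq 0$ one may choose $\alpha_1$ so that $Z+\alpha_1\widetilde W\perp W$ in $L^2$, placing it in $\overline{\mathrm{Ran}(L_-)}$. Your proof needs this second case to be complete.
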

	\begin{proof} Note that
		\begin{equation*}
			\aligned \big(\mathcal{P}f, f \big)_{L^2} = \big( L_+F ,
			F\big)_{L^2}, \quad F: = \big(L_-\big)^{1/2}f.
			\endaligned
		\end{equation*}
		Thus it suffices to find $F$ such that there exists $g \in
		H^4_{rad}$, $ F=(\Delta + V) g$ and
		\begin{equation}\label{L+negative}
			\aligned \big(L_+F, F \big)_{L^2} < 0,
			\endaligned
		\end{equation}
		where $V:=(I_{\lambda}*W^p)W^{p-2}$.
		
		We divide into two cases. 
		
		First assume that N=3, 4, in these cases $W\notin L^2$, we will use the  localization method. Let $W_a:=\chi(x/a)W(x)$, where $\chi$ is a smooth, radial positive function such that $\chi(r)=1$ for $r\leq1$ and $\chi(r)=0$ for $r\geq2$. We first claim 
		\begin{equation}\label{Eanegative}
			\exists a>0,\; E_a:=\int L_+W_a W_a<0
		\end{equation}
		Recall that $-\Delta W=(I_{\lambda}*W^p)W^{p-1}$. Thus
		\begin{align*}
			L_+W_a=&-(p-2)(I_{\lambda}*W^p)\chi(x/a)W^{p-1}-p\big[I_{\lambda}*(\chi(x/a)W^p)\big]W^{p-1}\\
			&-\frac{2}{a}\nabla \chi(x/a) \nabla W-\frac{1}{a^2}\Delta \chi(x/a) W.
		\end{align*}
		Hence
		\begin{align*}
			\int L_+W_a W_a=&-(p-2)\int(I_{\lambda}*W^p)\chi^2(x/a)W^{p}-p\int \big[I_{\lambda}*(\chi(x/a)W^p)\big]W^{p}\\
			&-\frac{2}{a}\int \nabla \chi(x/a) \nabla W W-\frac{1}{a^2}\int \Delta \chi(x/a) W^2.
		\end{align*}
		Let $A:=\frac{2}{a}\int \nabla \chi(x/a) \nabla W W,\;B:=\frac{1}{a^2}\int \Delta \chi(x/a) W^2$. According to the explicit expression \eqref{w function} of W, $W\leq C|x|^{-(N-2)}$ and $\nabla W\leq C|x|^{-(N-1)}$ at infinity, which gives $|(A)|+|(B)|\leq \frac{C}{a}$ if $N=3$, $|(A)|+|(B)|\leq \frac{C}{a^2}$ if $N=4$. Hence \eqref{Eanegative}.
		
		Let us fix $a$ such that \eqref{Eanegative} holds. Recall that W is not in $L^2$. Thus $\Delta+V$ is a self-adjoint operator on $L^2$, with domain $H^2$, and without eigenfunction. In particular the orthogonal of its range $R(\Delta+V)$ is {$0$}, and thus $R(\Delta+V)$ is dense in $L^2$. Let $\epsilon >0$, and consider $G_{\epsilon}\in H^2$ such that
		$$\big\|(\Delta+V)G_{\epsilon}-(\Delta+V-1)W_a\big\|_{L^2}\leq\epsilon.$$
		Taking
		$$F_{\epsilon}:= \big(L_- +1 \big)^{-1} L_- G_{\epsilon},$$
		we obtain $\big\|(\Delta+V-1)(F_{\epsilon}-W_a)\big\|_{L^2}\leq\epsilon$ which implies $\big\|F_{\epsilon}-W_a\big\|_{H^2}\leq\epsilon\big\|(\Delta+V-1)^{-1}\big\|_{L^2\to L^2}$. Hence for some constant $C_0$, we have
		\begin{equation*}
			\aligned \left| \int_{\R^N}  L_+   F_{\epsilon} \cdot F_{\epsilon}
			-\int_{\R^N}  L_+ W_a \cdot W_a \right| \leq C_0\epsilon.
			\endaligned
		\end{equation*}
		As a consequence of \eqref{Eanegative}, we get \eqref{L+negative} for $F=F_{\epsilon},
		\epsilon=\frac{-E_a}{2C_0}$, which shows the claim in the  case $N=3,4$.
		
		Assume now that $N\geq5$, so that $W\in L^2$ and more generally in all space $H^s(\R^N).$ In this case 
		\begin{equation*}
			\aligned  \text{Ran}(L_-) ^{\bot} = \text{Null}(L_-)
			=\text{span}\{W\},
			\endaligned
		\end{equation*}
		Thus
		\begin{equation}\label{RLn}
			\aligned \text{Ran}(L_-) = \{f\in L^2, (f, W)_{L^2} =0\}.
			\endaligned
		\end{equation}
		
		Note that $L_+$ is a self-adjoint compact perturbation of $-\Delta$
		and
		\begin{equation*}
			\aligned \big(L_+W, W \big)_{L^2} = -(2p-2) \int |\nabla W|^2 \; dx < 0,
			\endaligned
		\end{equation*}
		which implies that $L_+$ has a negative eigenvalue. Let $Z$ be the
		eigenfunction for this eigenvalue (it is radial by the minimax
		principle). Note that $L_+ \widetilde{W} =0$, then for any real
		number $\alpha$, we have
		\begin{equation}\label{ZW}
			\aligned E_0: = \int_{\R^d} L_+(Z+\alpha \widetilde{W} ) \cdot (Z+
			\alpha \widetilde{W} ) = \int_{\R^d} L_+  Z \cdot Z < 0.
			\endaligned
		\end{equation}
		Note that
		$$\big(\widetilde{W} , W\big)_{L^2} \not = 0,$$
		we can choose the real number $\alpha_1$ such that
		\begin{equation*}
			\aligned \big( Z + \alpha_1 \widetilde{W} , W \big)_{L^2} =0,
			\endaligned
		\end{equation*}
		which means that
		\begin{equation*}
			\aligned \left( (L_- +1) (Z+\alpha_1 \widetilde{W} ) , W
			\right)_{L^2} =& \left(
			Z+\alpha_1 \widetilde{W }  ,(L_-+1)  W \right)_{L^2}\\
			=& \big( Z+\alpha_1 \widetilde{W }
			,  W \big)_{L^2}=0.
			\endaligned
		\end{equation*}
		
		By \eqref{RLn}, for any $\epsilon>0$, there exists a function
		$G_{\epsilon} \in H^2_{rad}$ such that
		\begin{equation*}
			\aligned \big\|  L_-  G_{\epsilon} -\big(L_-+1\big) (Z+\alpha_1
			\widetilde{W })\big\|_{L^2} < \epsilon.
			\endaligned
		\end{equation*}
		Taking
		$$F_{\epsilon}:= \big(L_- +1 \big)^{-1} L_-   G_{\epsilon},$$
		we obtain that
		
		\begin{equation*}
			\aligned \big\| \big(L_- + 1\big) \big(F_{\epsilon} - (Z + \alpha_1
			\widetilde{W })\big)\big\|_{L^2} \leq \epsilon,
			\endaligned
		\end{equation*}
		which implies that
		\begin{equation*}
			\aligned \big\| F_{\epsilon} - (Z+\alpha_1\widetilde{W}
			)\big\|_{H^2} \leq \epsilon \big\| \big(L_- +1 \big)^{-1}\big\|_{L^2
				\rightarrow L^2}.
			\endaligned
		\end{equation*}
		Hence for some constant $C_0$, we have
		\begin{equation*}
			\aligned \left| \int_{\R^d}  L_+   F_{\epsilon} \cdot F_{\epsilon}
			-\int_{\R^d}  L_+ (Z+\alpha_1\widetilde{W} ) \cdot
			(Z+\alpha_1\widetilde{W}) \right| \leq C_0\epsilon.
			\endaligned
		\end{equation*}
		By \eqref{ZW}, we have \eqref{L+negative} for $F=F_{\epsilon},
		\epsilon=-\frac{E_0}{2C_0}$. \end{proof}

	\subsection{Decay of the eigenfunctions at infinity } By the bootstrap argument,
	we know that $\YYY_{\pm} \in C^{\infty}\cap H^{\infty}$. In fact, we
	have $\YYY_{\pm} \in \mathcal{S}$. By \eqref{eigenf-system}, it
	suffices to show that $\YYY_1\in \mathcal{S}$. Note that $\YYY_1$
	satisfies that
	\begin{align*}
		\left(e^2_0 + \Delta^2 \right)\YYY_1 = &  -p\Delta \left(
		I_{\lambda}*(W^{p-1}\YYY_1)W^{p-1} \right)- p \left(
		I_{\lambda}*W^p \right)\left(I_{\lambda}*(W^{p-1}\YYY_1)\right)W^{2p-3} \\
		& - (p-1)\Delta \left( (I_{\lambda}*W^p)W^{p-2}\YYY_1 \right)  -
		(p-1)\left( (I_{\lambda}*W^p)^2 W^{2p-4}\YYY_1\right)\\
		& - (I_{\lambda}*W^p) W^{p-2} \Delta\YYY_1.
	\end{align*}
	Thus,
	\begin{align*} \left(e_0 - \Delta \right)^2 \YYY_1 =& - (p-1)\Delta \left( (I_{\lambda}*W^p)W^{p-2}\YYY_1 \right)-(p-1)\left( (I_{\lambda}*W^p)^2 W^{2p-4}\YYY_1\right)\\
		&-p\Delta \left( I_{\lambda}*(W^{p-1}\YYY_1)W^{p-1} \right)- p \left(
		I_{\lambda}*W^p \right)\left(I_{\lambda}*(W^{p-1}\YYY_1)\right)W^{2p-3}\\
		& - (I_{\lambda}*W^p) W^{p-2} \Delta\YYY_1-2e_0 \Delta\YYY_1.
	\end{align*}
	Because of the existence of the nonlocal interaction on the right
	hand side, the decay estimate in
	\cite{DuyMerle:NLS:ThresholdSolution} does not work. From the Bessel
	potential theory in \cite{Ste:70:book}, we know that the integral
	kernel $G$ of the operator $\left(e_0 - \Delta \right)^{-2}$ is
	\begin{align*}
		G(x)=\frac{1}{(4\pi)^2}\int^{\infty}_0e^{-\frac{e_0}{4\pi}\delta}
		e^{-\frac{\pi}{\delta}|x|^2} \delta^{\frac{-N+4}{2}}
		\frac{d\delta}{\delta}.
	\end{align*}
	Hence we have
	\begin{enumerate}
		\item $G(x)=\frac{|x|^{-N+4}}{\gamma(4)}+o(|x|^{-N+4}),\; |x|\rightarrow
		0$;
		\item there exists $c>0$ such that
		$$G(x)=o(e^{-c|x|}), \; |x|\rightarrow +\infty.$$
	\end{enumerate}
	Then the conclusion follows by the analogue estimates in
	\cite{FroJL:07:effective dynamics, KriMR:mass-subcritical har}.\qed
	
	\def\cprime{$'$}

\end{document}